\DeclareRobustCommand{\VAN}[3]{#2}
\begin{document}

\newenvironment {proof}{{\noindent\bf Proof.}}{\hfill $\Box$ \medskip}

\newtheorem{thm}{Theorem}[section]
\newtheorem{propn}[thm]{Proposition}
\newtheorem{lemma}[thm]{Lemma}
\newtheorem{eg}[thm]{Example}
\newtheorem{defn}[thm]{Definition}
\newtheorem{remark}[thm]{Remark}
\newtheorem{notn}[thm]{Notation}
\newtheorem{corollary}[thm]{Corollary}
\newtheorem{conjecture}[thm]{Conjecture}
\newtheorem{assumption}[thm]{Assumption}
\newtheorem{condn}[thm]{Condition}
\newtheorem{example}[thm]{Example}

\newcommand\numberthis{\addtocounter{equation}{1}\tag{\theequation}}

\newcommand{\IP}{\mathbb P}
\newcommand{\IQ}{\mathbb Q}
\newcommand{\IE}{\mathbb E}
\newcommand{\IR}{\mathbb R}
\newcommand{\IZ}{\mathbb Z}
\newcommand{\IN}{\mathbb N}
\newcommand{\IT}{\mathbb T}
\newcommand{\IC}{\mathbb C}

\newcommand{\rmP}{\mathrm{P}}
\newcommand{\rmE}{\mathrm{E}}
\newcommand{\rmT}{\mathrm{T}}
\newcommand{\rmt}{\mathrm{t}}

\newcommand{\tX}{\tilde{X}}

\newcommand{\hX}{\hat{X}}
\newcommand{\hS}{\hat{S}}
\newcommand{\hq}{\hat{q}}
\newcommand{\hQ}{\hat{Q}}

\newcommand{\cP}{\mathcal{P}}
\newcommand{\cO}{\mathcal{O}}
\newcommand{\cA}{\mathcal{A}}
\newcommand{\cL}{\mathcal{L}}
\newcommand{\cG}{\mathcal{G}}
\newcommand{\cC}{\mathcal{C}}
\newcommand{\cV}{\mathcal{V}}
\newcommand{\cE}{\mathcal{E}}
\newcommand{\cT}{\mathcal{T}}
\newcommand{\cD}{\mathcal{D}}
\newcommand{\cZ}{\mathcal{Z}}
\newcommand{\cB}{\mathcal{B}}
\newcommand{\cK}{\mathcal{K}}
\newcommand{\cN}{\mathcal{N}}
\newcommand{\cX}{\mathcal{X}}
\newcommand{\cF}{\mathcal{F}}
\newcommand{\dd}{\; \mathrm{d}}
\newcommand{\ind}{\mathbf{1}}
\newcommand{\ldual}{\langle}
\newcommand{\rdual}{\rangle}
\newcommand{\lquad}{\langle}
\newcommand{\rquad}{\rangle}
\newcommand{\ballg}[1]{B_{#1}}
\newcommand{\ball}[2]{B_{#1}(#2)}
\newcommand{\stopped}[1]{\widehat{#1}}
\newcommand{\conditional}{|}
\newcommand{\compliment}[1]{#1^c}
\newcommand{\cinftyc}{C^\infty_c}
\newcommand{\abs}{|}
\newcommand{\limit}[2]{\lim_{#1 \to #2}}
\newcommand{\Id}{\text{Id}}
\pagestyle{fancy}
\lhead{}
\chead{SuperBrownian motion and the spatial Lambda-Fleming-Viot process}
\rhead{}

\numberwithin{equation}{section}

\renewcommand {\theequation}{\arabic{section}.\arabic{equation}}
\def \non{{\nonumber}}
\def \hat{\widehat}
\def \tilde{\widetilde}
\def \bar{\overline}

\title{\large{\bf SuperBrownian motion and the spatial Lambda-Fleming-Viot
process}}
                                           
\author{ \begin{small}
\begin{tabular}{ll}                              
Jonathan A. Chetwynd-Diggle \thanks{Supported by EPSRC grant
number EP/L015811/1}
&
Alison M. Etheridge 
\\   
Department of Mathematics
&
Department of Statistics\\       
Oxford University & Oxford University\\                   
Radcliffe Observatory Quarter, Woodstock Rd & 24-29 St Giles\\                                                         
Oxford OX2 6GG & Oxford OX1 3LB\\
UK & UK \\                        
jonathan.chetwynd-diggle@maths.ox.ac.uk & etheridg@stats.ox.ac.uk     \\
\end{tabular}
\end{small}}

\maketitle

\begin{abstract}
It is well known that the 
dynamics of a subpopulation of individuals of a rare type 
in a Wright-Fisher diffusion can be approximated by a
Feller branching process. Here we establish
an analogue of that result for a spatially distributed population
whose dynamics are described by a spatial Lambda-Fleming-Viot process
(SLFV).
The subpopulation of rare individuals is then approximated by 
a superBrownian motion.
This result mirrors 
\cite{cox/durrett/perkins:2000}, where it is shown 
that when suitably rescaled, sparse 
voter models converge to superBrownian motion. 
We also prove the somewhat more surprising result, that by choosing the 
dynamics of the SLFV appropriately we can recover superBrownian motion 
with stable branching in an analogous way. This is a spatial analogue of 
(a special case of) results of \cite{bertoin/legall:2006}, who show 
that the generalised Fleming-Viot process that is dual to the beta-coalescent, 
when suitably rescaled, converges to a continuous state branching 
process with stable branching mechanism.

\vspace{.1in}

\noindent {\bf Key words:}  Spatial Lambda Fleming-Viot model, Superprocess, 
Stable branching process, scaling limits

\vspace{.1in}

\noindent {\bf MSC 20}10 {\bf Subject Classification:}  Primary:
60F05,
60G57,
60J25,
60J68,
60J80
\\
Secondary:   
60G51,
60G55,
60J75,
92D10

\end{abstract}

\tableofcontents

\setcounter{equation}{0}

\section{Background}
\label{background}

Our aim in this paper is to establish a relationship between two, at first sight, 
very different classes of measure-valued processes. The first, the spatial
Lambda-Fleming-Viot processes, is a collection of models for
the evolution of frequencies of different 
genetic types in a population that is dispersed across a spatial continuum. The second
is the (finite and infinite variance) superBrownian motions. 
Our motivation is two-fold. On the one hand, we add to the panoply of processes that
converge to superBrownian motion; on the other, we address a question of some interest
in population genetics: how does the frequency of a rare neutral 
mutation evolve in a spatially distributed population?

SuperBrownian motion, or the Dawson-Watanabe superprocess,
was introduced independently by \cite{watanabe:1968} and \cite{dawson:1975} as a continuous time and space
approximation to systems of branching Brownian motions.  
In this way it can be thought of as a spatial analogue of the 
Feller diffusion approximation
to critical (or near-critical) Galton-Watson branching processes.
We shall recall its definition in 
Section~\ref{superbrownian motion subsection} below.

In addition to the huge literature exploring the rich mathematical structure of
superBrownian motion, over the last two decades an increasing body of evidence has
emerged that it is a universal scaling limit of critical 
interacting particle systems above a critical dimension. 
It has been obtained as a limit of lattice trees (above 8 dimensions, e.g.~\cite{holmes:2008}), 
oriented 
percolation (above 4 dimensions, \cite{vanderhofstad/slade:2003}), 
the contact process (above 4 dimensions, e.g.~\cite{vanderhofstad/sakai:2010}),
the
voter model (in two or more dimensions, e.g.~\cite{cox/durrett/perkins:2000})
and the Lotka-Volterra model (\cite{cox/perkins:2005}).
By changing the range of the interaction with the scaling, one can also obtain
it from the contact process in lower dimensions (\cite{cox/durrett/perkins:1999}).
These analyses prove convergence of finite-dimensional distributions; \cite{vanderhofstad/holmes/perkins:2017} provide a tightness criterion that 
allows the extension to convergence on path space and apply it
to the example of sufficiently spread out lattice trees above 8 dimensions. 
We also refer to that paper for a more complete list of references.

For populations that are not spatially distributed, one classically models frequencies 
of different genetic types (usually refered to as alleles) through a Wright-Fisher 
or a Cannings model. Suppose that we are interested in the proportion of 
individuals of a particular type, that
we shall call type $1$. Under the Wright-Fisher
model, when suitably scaled, this proportion
converges to the Wright-Fisher diffusion. 
If type $1$ is rare, the absolute number of type $1$ individuals evolves approximately 
according to a branching process which, under the same scaling,
converges to a Feller diffusion. This branching process
approximation for the rare type
has been used extensively in the population genetics literature
and so it is natural to try to establish analogous results for spatially
distributed populations.

In one spatial dimension, the Wright-Fisher diffusion has a stochastic pde counterpart:
\begin{equation}
\label{WF spde}
dw_t(x)=\frac{1}{2}\Delta w_t(x)\dd t +\sqrt{\frac{1}{K} w_t(x)\big(1-w_t(x)\big)}
W(\dd t,\dd x),
\end{equation}
where $w_t(x)$ denotes the proportion of the population at spatial position $x$ at
time $t$ that is of type $1$, $K$ is the local population density,
and $W(dt,dx)$ is a space time white noise. Formally at 
least, if type $1$ is rare, this reduces to
$$
dw_t(x)=\frac{1}{2}\Delta w_t(x)\dd t+\sqrt{\frac{1}{K} w_t(x)}W(\dd t,\dd x),
$$
and if we set $X_t=K w_t$ to recover absolute numbers rather than proportions, 
the type $1$ population 
is modelled by 
$$
dX_t(x)=\frac{1}{2}\Delta X_t(x)\dd t +\sqrt{X_t(x)}W(\dd t,\dd x),
$$
which is the stochastic pde governing the density with respect to Lebesgue measure of
the (finite variance) superBrownian motion, and so it is certainly reasonable to hope to describe 
establishment of rare alleles in one dimensional populations using superBrownian motion.

In dimensions two and higher, equation~(\ref{WF spde}) has no solution and so we need
an alternative approach to modelling allele frequencies in higher dimensional spatial
continua. The obstructions to finding such an approach, often refered to as `the
pain in the torus', are well documented. We refer to \cite{barton/etheridge/veber:2013} for a survey.
The spatial Lambda-Fleming-Viot process (SLFV) introduced in \cite{etheridge:2008}, overcomes
the pain in the torus to provide a class of models for allele frequencies in populations
distributed across spatial continua of any dimension. 
The first rigorous construction is in \cite{barton/etheridge/veber:2010}.
The SLFV can be thought of as the spatial
counterpart of the `generalised Fleming-Viot process' (that
we shall refer to as the Lambda-Fleming-Viot process in what follows) 
of \cite{bertoin/legall:2003} and,
just as for their model, comes with a consistent
`backwards in time' model (a spatial analogue of the Lambda-coalescent)
for the genealogies describing relatedness between 
genes in individuals sampled from the population. 
We recall the definition of the process in Section~\ref{slfv subsection}.

As a special case of the results in \cite{etheridge/veber/yu:2014},
in one spatial dimension one can recover~(\ref{WF spde})
as a scaling limit of a particular SLFV. 
The corresponding scaling in higher dimensions leads to
the (deterministic) heat equation.
This can perhaps best be understood as a `law of large numbers' effect.
In particular, the initial conditions taken in that paper don't correspond to `rare' alleles.
As we shall see, we can recover superBrownian motion from the SLFV in arbitrary spatial
dimensions, but only if we take a sufficiently `sparse' initial condition.
This should of course be compared to the results of \cite{cox/durrett/perkins:2000}, who recover superBrownian motion from sparse
voter models and our analysis in the finite variance case
owes a great deal to that paper.
We should also mention the work of \cite{freeman:2010},
in which he introduces a very close relative of the SLFV, 
which he calls a `bursting process',
on $\IZ^d$ and shows that for $d\geq 3$, started from sparse initial conditions
and suitably scaled, that process too converges to a
superBrownian limit.

In the discussion up to this point
we have (implicitly) considered the finite variance superBrownian motion.
Where our work diverges from the body of work described above is that we are 
also able
to obtain superBrownian motions with stable branching mechanisms 
from particular choices of the SLFV. 
Such superprocesses are the spatial analogue of the continuous state branching processes
sometimes known as stable branching processes. In \cite{birkner/blath/capaldo/etal:2005},
it is shown that the special class of Lambda-Fleming-Viot processes
that are dual to the so-called Beta-coalescents
can be obtained as time-changed stable branching processes, revealing a deep 
connection between the two classes of processes. \cite{bertoin/legall:2006} show that 
in much the same way
as the Feller diffusion describes evolution of a rare allele in a population
evolving according to the Wright-Fisher diffusion, stable branching describes
the evolution of a rare allele under this Lambda-Fleming-Viot process
(see \cite{lambert/schertzer:2016} for a `backwards in time' analogue).
We provide the `back of the envelope' calculation that explains Bertoin and
Le~Gall's result in Section~\ref{heuristics}.
What is more 
surprising is that we can extract a
superBrownian motion with stable branching mechanism
from a sequence of rescaled SLFVs.
First, the conditions on the `Lambda'-measure under which we can 
construct the SLFV are more restrictive than those
under which we can construct the (non-spatial) Lambda-Fleming-Viot process.
Second, the spatial motion of individuals in the SLFV is intricately connected to the
reproduction mechanism, yet we are 
trying to produce a limit in which spatial motion is continuous and reproduction is 
discontinuous.  On the other hand, in \cite{etheridge/veber/yu:2014} the analogue of~(\ref{WF spde}) with the Laplacian replaced by the 
generator of a symmetric stable process is obtained as a scaling limit 
of an SLFV. In that case, in the limit the spatial motion is discontinuous and
the reproduction mechanism continuous.

The rest of this paper is laid out as follows. In 
Section~\ref{defns and results} we
remind the reader of the definitions of the superBrownian motion and the SLFV 
before stating our main results. We also give a heuristic explanation 
of our results.
In Section~\ref{martingale problem prelimit} we provide 
martingale characterisations of the scaled SLFVs, from which,
in Section~\ref{limits}, we
formally identify the limiting objects,
deferring tightness to Section~\ref{tightness} and the 
proof of some 
key estimates to Section~\ref{indeplinsection}.
The proof of convergence 
follows in
Section~\ref{convergence section}.

\section{Definitions and statement of results}
\label{defns and results}

Before stating our results in Section~\ref{main results}
below, we fix notation and define our
two classes of processes.

\subsection{SuperBrownian motion}
\label{superbrownian motion subsection}

We shall characterise superBrownian motion through a martingale problem. It is 
convenient to use distinct formulations in the finite and infinite variance
cases. 
For an introduction to superprocesses and, in particular, their construction as 
scaling limits of branching particle systems, we refer to \cite{dawson:1993}, \cite{perkins:2002} and \cite{etheridge:2000}.

A complete filtered probability space $(\Omega, \cF,\cF_t,\IP)$ will be 
implicit throughout.
We write ${\mathcal M}_F(\IR^d)$ for the space of finite measures on $\IR^d$, equipped with
the topology of weak convergence, and $C_0^k(\IR^d)$ for the space of 
$k$ times 
differentiable functions $\phi:\IR^d\rightarrow\IR$, vanishing at
infinity, and such that $\phi$ and 
its derivatives
up to $k$th order are bounded with norm
$$\|\phi\|_{C^k}=\max_{0\leq l\leq k}\left\|
\frac{\partial^l\phi}{\partial x^l}\right\|_{\infty}.$$

\begin{defn}[Finite variance superBrownian motion] \label{superBMdefn}
The {\em finite variance superBrownian motion} is the unique 
${\mathcal M}_F(\IR^d)$-valued Markov process $\{X_t\}_{t\geq 0}$ 
with continuous sample paths such that for each non-negative
$\phi\in C_0^3(\IR^d)$, the process
\begin{align*}
M_t(\phi) := \ldual X_t , \phi \rdual - \ldual X_0, \phi \rdual 
- \int_0^t \ldual X_s, \frac{m}{2}\Delta \phi \rdual \dd s
,
\end{align*}
is a continuous, square integrable martingale with quadratic variation given by
\begin{align*}
\lquad M(\phi)\rquad_t = 2\kappa \int_0^t \ldual X_s, \phi^2 \rdual \dd s,
\end{align*}
where $m,\kappa >0$ are constants.
\end{defn}

We shall not consider the most general possible superBrownian motions. Instead, we
restrict ourselves to those that arise as scaling limits of branching
Brownian motions
with offspring distribution in the domain of attraction of a stable law. These are 
naturally parametrised by a parameter $\beta\in (0,1)$ (with $\beta=1$ corresponding
to the finite variance case).

\begin{defn}[SuperBrownian motion with stable branching law]
\label{infvarDWdefn}
The {\em superBrownian motion with stable branching law of 
parameter $\beta \in (0,1)$}
is the unique ${\mathcal M}_F(\IR^d)$-valued Markov process $\{X_t\}_{t\geq 0}$
with c\`adl\`ag sample paths such that for 
each non-negative $\phi\in C_0^3(\IR^d)$, the process 
\begin{align}
\label{beta branching martingale}
M_t(\phi) := exp(-\ldual X_t, \phi \rdual)
- exp(-\ldual X_0, \phi \rdual)
- \int_0^t \ldual X_s, -\frac{m}{2}\Delta \phi + \kappa \phi^{1 + \beta} \rdual 
\exp(- \ldual X_s , \phi \rdual )\dd s
,
\end{align}
is a martingale, where $m,\kappa > 0 $ are constants.
\end{defn}

\subsection{The Spatial Lambda-Fleming-Viot Process}
\label{slfv subsection}

We now introduce the SLFV processes. 
In fact there is a much richer class of these processes than those we consider
here, incorporating, for example, various forms of natural selection. For a 
(somewhat out of date) survey we refer to \cite{barton/etheridge/veber:2013}.
We restrict 
ourselves to a population in which there are just two genetic types which we label
by $\{0,1\}$.
At each time $t$, the 
random function $\{w_t(x),\, x\in \IR^d\}$ 
is defined, up to a Lebesgue null set of $\IR^d$, by
\begin{equation}
\nonumber
w_t(x):= \hbox{ proportion of type }1\hbox{ at spatial position }x\hbox{ at time }t.
\end{equation}
A construction of an appropriate state space for $x\mapsto w_t(x)$ can be found in \cite{veber/wakolbinger:2015}.
Using the identification
$$
\int_{\IR^d\times \{0,1\}} f(x,\alpha) M(\dd x,\dd \alpha) = 
\int_{\IR^d} \big\{w(x)f(x,1)+ 
(1-w(x))f(x,0)\big\}\, \dd x,
$$
this state space is in one-to-one correspondence with the space
${\cal M}_\lambda$ of measures on $\IR^d\times\{0,1\}$ with 
`spatial marginal' Lebesgue measure,
which we endow with the topology of vague convergence. By a slight abuse of notation, 
we also denote the
state space of the process $(w_t)_{t\in\IR_+}$ by 
${\cal M}_\lambda$.

\begin{defn}[SLFV]
\label{slfvdefn}
Let $\mu$ be a finite measure on $(0,\infty)$ and, for each $r\in (0,\infty)$,
let $\nu_r$ be a probability measure on $(0,1]$.
Further, let $\Pi$ be a Poisson point process on 
$\IR^d\times (0,\infty)\times (0,\infty)\times (0,1]$ with intensity measure 
\begin{equation}\label{slfvdrive}
\dd x\otimes \dd t \otimes \mu(\mathrm{d} r)\nu_r(\dd \rho).
\end{equation}
The {\em spatial Lambda-Fleming-Viot process (SLFV)}
driven by \eqref{slfvdrive}, when it exists, is the ${\mathcal M}_\lambda$-valued process 
$(w_t)_{t\in\IR_+}$ with dynamics given as follows.

If $(x,t,r,\rho)\in \Pi$, a reproduction event occurs at time $t$ within the closed 
ball $\cB_r(x)$ of radius $r$, centred on $x$, in which case: 
\begin{enumerate}
\item Choose a parental location $z$ uniformly at random within 
$\cB_r(x)$, 
and a parental type, $\alpha$, according to $w_{t-}(z)$; that is
$\alpha=1$ with probability $w_{t-}(z)$ 
and $\alpha=0$ with probability $1-w_{t-}(z)$.
\item For every $y\in \cB_r(x)$, set 
$w_t(y) = (1-\rho)w_{t-}(y) + \rho\ind_{\{\alpha=1\}}$.
\item For $y\notin \cB_r(x)$, $w_t(y)=w_{t-}(y)$.
\end{enumerate}
We shall refer to $\rho$ as the {\em impact} of the event.
\end{defn}
Before providing conditions under which the process exists, it is convenient to 
introduce the dual process of coalescing lineages that plays the r\^ole for the SLFV played
by the Lambda-coalescents for the (non-spatial) Lambda-Fleming-Viot processes.
The idea is that these lineages trace out the ancestry of a sample from the population.
The dual will also play a crucial r\^ole in establishing the estimates
of Section~\ref{indeplinsection}.

The dynamics of the dual are driven by the same Poisson process of events
$\Pi$ that drives the SLFV. This driving process
is reversible and we shall abuse notation by indexing events by
`backwards time' when discussing our dual. We suppose that
at time $0$, `the present', we sample $k$ individuals from locations
$x_1,\ldots ,x_k$ and we write $\xi_s^1,\ldots ,\xi_s^{N_s}$ for
the locations of the $N_s$ `ancestors' that make up our 
dual at time $s$ before the present. 
\begin{defn}[Dual to the SLFV]
\label{defn of dual}
The coalescing dual process $(\Xi_t)_{t\geq 0}$ is the
$\bigcup_{n\geq 1}(\IR^d)^n$-valued Markov process with dynamics defined as 
follows.  
At each event $(x,t,r,\rho)\in \Pi $:
\begin{enumerate}
\item For each $\xi_{t-}^i\in \cB_r(x)$, independently mark the corresponding 
ancestral lineage with probability $\rho$;
\item if at least one lineage is marked, 
all marked lineages disappear and are replaced by a single
ancestor, whose location is drawn uniformly at random 
from within $\cB_r(x)$.
\end{enumerate}
If no particles are marked, then nothing happens.
\end{defn}
Assuming that the SLFV and its dual exist, the duality is expressed through
the following proposition.
\begin{propn}\label{prop: dual}
The SLFV is
dual to the process $(\Xi_t)_{t\geq 0}$ in the sense that 
for every $k\in \IN$ and $\psi\in C((\IR^d)^k)\cap L^1((\IR^d)^k)$, we have
\begin{align}
\IE_{w_0}\bigg[\int_{(\IR^d)^k} & \psi(x_1,\ldots,x_k)\bigg\{\prod_{j=1}^k w_t(x_j)\bigg\}\, \dd x_1\ldots \dd x_k\bigg] \nonumber\\
& = \int_{(\IR^d)^k} \psi(x_1,\ldots,x_k)\IE_{\{x_1,\ldots,x_k\}}\bigg[\prod_{j=1}^{N_t} w_0\big( \xi_t^j\big)\bigg]\, \dd x_1 \ldots \dd x_k, \label{dual formula}
\end{align}
where the subscripts on the expectations denote the initial values of the 
corresponding processes. In particular, $\IE_{\{x_1,...,x_k\}}$ denotes 
expectation under the distribution of the dual process started from $\Xi_0 = \{x_1,...,x_k\}$.
\end{propn}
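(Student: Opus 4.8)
The plan is to read \eqref{dual formula} as a moment duality. Because $w_t$ is only defined up to Lebesgue-null sets, the genuinely well-defined test functionals are
\[
F_\psi(w):=\int_{(\IR^d)^k}\psi(\mathbf{x})\prod_{j=1}^{k}w(x_j)\,\dd\mathbf{x},\qquad \mathbf{x}=(x_1,\ldots,x_k),
\]
and \eqref{dual formula} asserts that $\IE_{w_0}[F_\psi(w_t)]$ equals the corresponding $\psi$-average of $\prod_{j=1}^{N_t}w_0(\xi_t^j)$ under the dual. Morally the pairing is carried by the pointwise duality function $H(w,\mathbf{x}):=\prod_j w(x_j)$, defined for $w\in\mathcal{M}_\lambda$ and any finite tuple of points; since $0\le w\le 1$ gives $H\le 1$ and $\psi\in L^1((\IR^d)^k)$, Fubini lets one pass freely between $F_\psi$ and $H$. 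I would then prove the duality in the standard way: establish the \emph{generator identity} $\big(\cL^{\mathrm{SLFV}}H(\cdot,\mathbf{x})\big)(w)=\big(\cL^{\mathrm{dual}}H(w,\cdot)\big)(\mathbf{x})$ (as an identity between the well-defined integrated functionals, obtained by pairing the pointwise computation with $\psi\,\dd\mathbf{x}$) and invoke a standard duality theorem for Markov processes to lift equality of generators to equality of the two averages.

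The crux is the generator identity, which I would verify one event at a time, a computation made transparent by the fact that the SLFV and its dual are driven by the same Poisson process $\Pi$. Fix an event $(x,r,\rho)$, let $S=\{j:x_j\in\cB_r(x)\}$ with $m=|S|$ be the set of sample points lying in the affected ball, and set $\bar w=|\cB_r(x)|^{-1}\int_{\cB_r(x)}w(z)\,\dd z$. On the forward side one averages $H$ over the parental type, which is $1$ with probability $\bar w$, and uses the update $w(y)\mapsto(1-\rho)w(y)+\rho\ind_{\{\alpha=1\}}$; expanding $\prod_{j\in S}\big[(1-\rho)w(x_j)+\rho\ind_{\{\alpha=1\}}\big]$ over subsets $T\subseteq S$ produces a sum indexed by $T$. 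On the backward side each lineage in $S$ is marked independently with probability $\rho$, and any non-empty marked set $T$ coalesces to a single lineage at a uniform location $z$, replacing $\prod_{j\in T}w(x_j)$ by $\IE_z[w(z)]=\bar w$. Both sides collapse to the common expression
\[
\sum_{\emptyset\neq T\subseteq S}\rho^{|T|}(1-\rho)^{m-|T|}\Big(\prod_{j\notin S}w(x_j)\Big)\Big[\bar w\prod_{j\in S\setminus T}w(x_j)-\prod_{j\in S}w(x_j)\Big],
\]
the $\bar w$-dependence of the $T=\emptyset$ term dropping out via $\bar w(1-\rho)^m+(1-\bar w)(1-\rho)^m=(1-\rho)^m$ before that term recombines with $-H(w)$. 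Integrating this common expression against the intensity $\dd x\otimes\mu(\dd r)\nu_r(\dd\rho)$ yields the generator identity.

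The main obstacle is not the algebra above but the analytic bookkeeping needed to make it rigorous. The SLFV generator integrates the event rate over all $x\in\IR^d$, so the total event rate is infinite; the point is that only events whose ball meets $\{x_1,\ldots,x_k\}$ change $H$, and under the moment condition on $\mu$ that guarantees existence of the process the rate of such events, at most $k\int|\cB_r(0)|\,\mu(\dd r)$, is finite. This both makes $\cL^{\mathrm{SLFV}}H(\cdot,\mathbf{x})$ well defined and licenses the sequential, event-by-event coupling underlying the computation. It then remains to check the hypotheses of the duality theorem, namely that $H(\cdot,\mathbf{x})$ and $H(w,\cdot)$ lie in appropriate domains and that the integrability required to interchange expectation and time-integration holds uniformly on $[0,t]$; both follow from $H\le 1$ together with finiteness of the effective event rate. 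With these in hand the two averages coincide and \eqref{dual formula} follows.
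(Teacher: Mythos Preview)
Your generator-duality approach is correct and the event-by-event computation you outline is exactly right. However, the paper does not actually prove this proposition: it is stated with the remark ``Assuming that the SLFV and its dual exist, the duality is expressed through the following proposition,'' and the surrounding discussion attributes the construction (and implicitly the duality) to \cite{barton/etheridge/veber:2010}, who in turn build on a result of \cite{evans:1997}. So there is nothing in-text to compare your argument against; what you have written is a self-contained proof where the paper simply cites one.

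One small imprecision worth tightening: you bound the rate of events that can change $H$ by $k\int|\cB_r(0)|\,\mu(\dd r)$, i.e.\ the rate at which events overlap $\{x_1,\ldots,x_k\}$. In general this need not be finite under~\eqref{condition for existence 1}; what is finite is the integrated \emph{change} in $H$, since each such event alters $H$ by $\cO(\rho)$ and it is $\int\rho\,r^d\,\nu_r(\dd\rho)\mu(\dd r)$ that~\eqref{condition for existence 1} controls. Phrasing the domain/integrability check in terms of that quantity (rather than the raw event rate) makes the application of the abstract duality theorem clean. In the concrete cases used later in the paper $\mu$ has bounded support and the distinction is moot, but for the proposition as stated you want the sharper bound.
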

In \cite{barton/etheridge/veber:2010}, through a powerful result of \cite{evans:1997}, it is shown that
existence of the SLFV can be deduced from existence of the dual.
In that paper, by assuming that
\begin{equation}
\label{condition for existence 1}
\int_{[0,1]\times (0,\infty )}\rho r^d\nu_r(\dd \rho 
)\mu(\dd r)<\infty,
\end{equation}
one guarantees that started from any finite number of individuals, the
jump rate in the dual is finite, and so Definition~\ref{defn of dual} 
gives rise to a well-defined process. 
Ancestral lineages in the dual process move around according to 
(dependent) compound Poisson processes which can coalesce if they are 
affected by the same event. Although one can write down more 
general conditions under which the SLFV exists, see \cite{etheridge/kurtz:2014},
Condition~\ref{condition for existence 1} is trivially satisfied for the processes
considered below.

\subsection{Main results}
\label{main results}

We are going to extract superBrownian motion from the SLFV through a scaling and a passage
to the limit. There will be two cases, the first leading to the finite variance superprocess
and the second to a superprocess with a stable branching law. 

At the $N$th stage of our scaling, the local population density will
be $K=K(N)$.  We shall denote our scaled SLFV by $w^N$ 
and the population of type $1$ individuals by $X^N=Kw^N$, which is defined 
Lebesgue almost everywhere. We shall think of $X^N$ as a measure-valued 
process and abuse notation by writing, for any Borel measurable $\phi$, 
$$\ldual X_t^N,\phi\rdual =K\int_{\IR^d}\phi(x)w_t^N(x)\dd x
=\int_{\IR^d}\phi(x)
X_t^N(x)\dd x.$$

Before scaling, the SLFV will be driven by a Poisson point process
$\Pi^N$ with intensity $\dd x\otimes \dd t\otimes \mu^N(\dd r)\nu_r(\dd \rho)$. 
Each $(x,t,r,\rho)\in \Pi^N$ signals a reproduction event for 
the scaled process
associated with the quadruple $(\frac{x}{M}, \frac{t}{N}, \frac{r}{M}, \frac{\rho}{J})$,
where $M:= M(N)$, $J:= J(N)$, are some positive 
increasing functions of $N$. In other words, for the scaled process,
time is sped up by a factor $N$, space is
shrunk by $M(N)$, and the impact of each event is reduced by a factor
$J(N)$. Moreover, the local population
density is increased to $K=K(N)$ where $K(N)$ is another increasing
function of $N$.

We shall consider two different scenarios:
\begin{enumerate}
\item{{\bf Fixed radius case:} 
Here we shall take $\mu^N(\dd r)=\delta_r$, independent of $N$, and 
$\nu_r(\dd \rho)=\delta_u$ where
$u\in (0,1]$ is fixed.}
\item{{\bf Variable radius case:} Here we shall take
\begin{align}
\label{variable radius measure}
\mu^N(\dd r) := r^\alpha \ind_{\left\{J^\frac{-1}{\gamma} < r <1 \right\} } \dd r,
\end{align}
where $\alpha$ is a real constant and $\gamma$ is a positive constant. We then take 
$\nu_r :=  \delta_{r^{-\gamma}}$.} 
\end{enumerate}
The lower bound on $r$ in the variable radius case ensures that after scaling
the impact of each
event is at most $1$.

\begin{thm}[Fixed radius case] 
\label{result fixed radius}
In the notation above, in the fixed radius case, suppose that
$X_0^N$ is absolutely continuous with respect to Lebesgue measure, that
the support $\mathrm{supp}(X_0^N)\subseteq D$, where $D$ is a 
compact subset of $\IR^d$ (independent of $N$) and that 
$X_0^N$ converges weakly to $X_0\in {\cal M}_F(\IR^d)$. Moreover suppose that
\begin{enumerate}
\item \label{fixed radius cond1}
$M \to \infty$ as $N\to \infty$,
\item \label{fixed radius cond2}
$\frac{N}{JM^2}\to C_1$ as $N\to\infty$, and
\item 
\label{condition 3 of finite variance case}
$\frac{KN}{J^2 M^d}\to C_2$ as $N\to\infty$,
\end{enumerate}
for some $C_1, C_2 \in (0,\infty)$.
Then if 
\begin{equation}
\label{random walk conditions}
\left\{\begin{array}{rl}
\frac{M}{J} \to 0 &\mbox{if }d=1,\\
\\
\frac{\log M}{J} \to 0 &\mbox{if }d=2,\\
\\
\frac{1}{J} \to 0 &\mbox{if }d\geq3,
\end{array}\right.
\end{equation}
the sequence $\{X^N\}_{N\geq 1}$ converges weakly to 
finite variance superBrownian motion with initial condition $X_0$
and parameters 
$$m=2C_1u r^{d+2}\int_{|x|\leq 1}x^2\dd x, \qquad 
\kappa = \frac{1}{2}C_2u^2|\cB_r|^2.$$
\end{thm}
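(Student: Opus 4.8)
The plan is to follow the classical martingale-problem route to superprocess convergence. First I would record, for each non-negative $\phi\in C_0^3(\IR^d)$, the semimartingale decomposition of $t\mapsto\langle X_t^N,\phi\rangle$ obtained from the generator of the scaled SLFV (the content of Section~\ref{martingale problem prelimit}). Next I would establish tightness of $\{X^N\}_{N\ge1}$ in the Skorokhod space $D([0,\infty),\mathcal M_F(\IR^d))$ (Section~\ref{tightness}). Then I would show that every subsequential weak limit $X$ solves the martingale problem of Definition~\ref{superBMdefn}, and finally invoke the uniqueness asserted there to conclude that the whole sequence converges to finite-variance superBrownian motion with the stated parameters.

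The heart of the identification is the computation of the compensator and the predictable quadratic variation of $\langle X^N,\phi\rangle$. A single scaled event, centred at $x'$ with radius $r/M$ and impact $u/J$, alters $X^N$ on the ball $B=\cB_{r/M}(x')$ by $\Delta X^N(y)=-\tfrac{u}{J}X^N_{t-}(y)+K\tfrac{u}{J}\ind_{\{\alpha=1\}}$, where the parental type satisfies $\IE_\alpha[\ind_{\{\alpha=1\}}]=\tfrac1K\bar X^N_B$, the ball-average of the rare-type density; such events arrive at rate $M^dN\,dx'$. Taking the expected change of $\langle X^N,\phi\rangle$, integrating over $x'$, and Taylor-expanding $\phi$ to second order — the first-order term vanishing by the symmetry of the ball-overlap kernel — produces a drift of the form $\tfrac{N}{JM^2}\,c\,\langle X^N_s,\Delta\phi\rangle$, which by Condition~\ref{fixed radius cond2} converges to $\langle X_s,\tfrac{m}{2}\Delta\phi\rangle$. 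The same bookkeeping applied to $(\Delta\langle X^N,\phi\rangle)^2$ gives, after discarding the lower-order terms lacking the factor $K$ (negligible because the sparse initial data force $w^N=X^N/K\to0$), a quadratic-variation rate of order $\tfrac{KN}{J^2M^d}\langle X^N_s,\phi^2\rangle$, converging by Condition~\ref{condition 3 of finite variance case} to $2\kappa\langle X_s,\phi^2\rangle$; matching the geometric constants yields precisely the stated $m$ and $\kappa$. Continuity of the limit follows since the maximal jump of $\langle X^N,\phi\rangle$ is $O(K/(JM^d))$, which these conditions force to $0$.

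The main obstacle is not this formal matching but the justification that the neglected error terms vanish and, above all, that no extra contribution to the quadratic variation survives. Both the replacement of ball-averages $\bar X^N_B$ by point values and the control of the second-moment structure require estimates on two-point correlations of $w^N$, i.e.\ on the probability that a pair of dual lineages is hit by a common event before separating. This is where Proposition~\ref{prop: dual} and the estimates of Section~\ref{indeplinsection} enter: via the dual, the relevant correlations are expressed through coalescence probabilities of the associated (dependent) compound-Poisson lineages, and the random-walk conditions~\eqref{random walk conditions} are exactly what forces these probabilities to vanish at the right rate in each dimension, reflecting the recurrence/transience dichotomy of the rescaled walks ($d=1$ recurrent, $d=2$ critical, $d\ge3$ transient). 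Demonstrating asymptotic independence of lineages — hence that the limiting branching carries no spatial correlation and that $2\kappa\int_0^t\langle X_s,\phi^2\rangle\,ds$ is the entire quadratic variation — is the crux of the argument.

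For tightness I would verify compact containment, using that $\operatorname{supp}(X_0^N)\subseteq D$ together with control on the rate at which mass spreads under the rescaled dynamics, so that on any bounded time interval the processes remain, with high probability, within a fixed compact enlargement of $D$; I would then apply an Aldous--Rebolledo-type criterion to the real-valued semimartingales $\langle X^N,\phi\rangle$ as $\phi$ ranges over a convergence-determining class, the modulus estimates again coming from the compensator and quadratic-variation bounds above (together with $M\to\infty$ from Condition~\ref{fixed radius cond1}). Given tightness and the identification of every limit point as a solution of the martingale problem of Definition~\ref{superBMdefn}, uniqueness of that problem upgrades subsequential convergence to convergence of the full sequence, which completes the proof.
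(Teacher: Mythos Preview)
Your proposal is correct and follows essentially the same route as the paper: semimartingale decomposition of $\langle X^N,\phi\rangle$ (Section~\ref{martingale problem prelimit}), Taylor expansion to extract the Laplacian drift and the $\langle X_s,\phi^2\rangle$ quadratic variation (Section~\ref{limits}), duality with coalescing lineages to kill the residual two-point term in the quadratic variation under the sparsity conditions~\eqref{random walk conditions} (Lemma~\ref{independence requirement fixed radius} and Section~\ref{indeplinsection}), tightness plus continuity of limit points via the vanishing jump size (Section~\ref{tightness}), and finally passage to the limit in the martingale problem with uniqueness. The only cosmetic difference is that the paper organises tightness through the Cox--Durrett--Perkins version of Jakubowski's criterion (Proposition~\ref{3.1propn}) together with Jacod--Shiryaev's quadratic-variation tightness for the projections, rather than an Aldous--Rebolledo argument, but the substance is the same.
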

Conditions~\ref{fixed radius cond1}-\ref{condition 3 of finite variance case}
 guarantee tightness; 
(\ref{random walk conditions}) will ensure that type $1$ is
sufficiently `sparse' that, asymptotically, descendants of different
type 1 `individuals' evolve independently (they don't sense that
total population density is constrained) and we 
recover a branching structure. 
Notice in particular that~(\ref{random walk conditions}), combined 
with Conditions~\ref{fixed radius cond2} 
and~\ref{condition 3 of finite variance case}
implies that $K\to\infty$ as $N\to \infty$.
In Section~\ref{heuristics} we present a heuristic argument which suggests that these
conditions are in some sense optimal.  
We note that the conditions of Theorem~\ref{result fixed radius} are 
analogous to those of \cite{cox/durrett/perkins:2000}, Theorem~1.1. 

It isn't hard to convince oneself that 
if we fix the radius of events, then it is not possible to find a sequence
of impact distributions $\nu^N$ and a scaling under which the limiting 
process is superBrownian motion with a stable branching mechanism of infinite
variance, which is why we turn to $\mu^N(\mathrm{d} r)$.
Theorem~\ref{jonoinfinitelimit} provides conditions 
under which we do then have convergence to superBrownian motion with a 
stable branching mechanism.
Even with our special choice of $\mu^N$, in $d=1$
these are considerably more technical than those in the fixed radius case. 
However, we shall see them emerge in a natural way from our calculations.

\begin{thm}[Variable radius case]\label{jonoinfinitelimit}
In the notation above, suppose that $X_0^N$ is absolutely continuous 
with respect to
Lebesgue measure, that
the support $\mathrm{supp}(X_0^N)\subseteq D$, where $D$ is a 
compact subset of $\IR^d$ (independent of $N$) and that 
$X_0^N$ converges weakly to $X_0\in {\cal M}_F(\IR^d)$. 
We work in the variable radius case with $\mu^N(\mathrm{d} r)$ given 
by~(\ref{variable radius measure})
and $\nu_r=\delta_{r^{-\gamma}}$.
Fix $\beta \in (0,1)$ and take $\alpha$ and $\gamma$ such that
\begin{enumerate}
\item 
$0 < \gamma - d < \frac{1}{1 - \beta}$ and $\gamma >2$ if $d = 1$,
\item
$\alpha + 1 = (\beta +1)(\gamma - d)$.
\end{enumerate}
Suppose that $M\to\infty$ as $N\to\infty$ and that
\begin{enumerate}
\setcounter{enumi}{2}
\item\label{infvarcond1}
$\frac{N}{JM^2} \to C_1$,
\item \label{infvarcond2}
$\frac{N}{J}\left( \frac{K}{JM^d} \right)^{\beta} \to C_2$ and $K\to \infty$,
\item\label{infvarcond3}
$J^{\frac{\gamma - d}{\gamma}} M^{-\frac{2}{\beta}} \to \infty $.
\end{enumerate}
In addition, we require: 
\begin{equation}
\label{infvarrw}
\left\{\begin{array}{rl}
\Big(\frac{1}{M^2}\Big)^{(1-\beta)/\beta}J^{2(1-\beta)(\gamma -1)/\gamma}
\frac{M^2}{J} \to 0 & \mbox{if } d=1;\\
\frac{\log M}{J} \to 0 &\mbox{if } d=2;\\
\frac{1}{J}\to 0 &\mbox{if }d\geq 3.
\end{array}\right.
\end{equation}
Then the sequence $\{X^N\}_{N\geq 1}$ converges to 
superBrownian motion with stable branching law with parameter $\beta$,
initial condition $X_0$, and
$$m=2C_1\int_{|x|\leq 1}x^2 \dd x \int_0^1r^{\alpha+d+2-\gamma}dr,\qquad
\kappa =\frac{C_2}{\gamma -d}\int_0^\infty \big(e^{-\upsilon}+\upsilon
-1\big)\upsilon^{-(\beta+2)}\dd \upsilon.$$
\end{thm}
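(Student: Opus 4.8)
The plan is to characterise the scaled processes $X^N$ through a martingale problem, to prove that the associated generators converge to that of the superBrownian motion with stable branching law, and then to combine tightness with uniqueness of the limiting martingale problem. First I would establish (the content of Section~\ref{martingale problem prelimit}) that for each non-negative $\phi\in C_0^3(\IR^d)$ the process
$$\exp(-\langle X_t^N,\phi\rangle)-\exp(-\langle X_0^N,\phi\rangle)-\int_0^t \cL^N\!\big(\exp(-\langle\,\cdot\,,\phi\rangle)\big)(X_s^N)\dd s$$
is a martingale, where $\cL^N$ is read off directly from the driving Poisson process $\Pi^N$. Each event contributes a jump: on the scaled ball of radius $r/M$ the density $X^N=Kw^N$ is replaced by $(1-\rho/J)X^N+(K\rho/J)\ind_{\{\alpha=1\}}$, so $\langle X^N,\phi\rangle$ changes by a small ``dilution'' term of order $\rho/J$ together with a ``reproduction'' term $(K\rho/J)\ind_{\{\alpha=1\}}\int_{\cB}\phi$, the event $\{\alpha=1\}$ having probability equal to the ball-average of $w^N_{t-}$.

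Next I would identify the two limiting contributions (Section~\ref{limits}) by expanding $e^{-\Delta}-1$ where $\Delta$ denotes the change in $\langle X^N,\phi\rangle$. Writing the rate of events in scaled variables as $NM^{d+\alpha+1}(r')^{\alpha}\dd x'\dd r'$ over $J^{-1/\gamma}/M<r'<1/M$, the dilution term combined with a second-order Taylor expansion of the ball-average of $\phi$ yields, after tuning by Condition~\ref{infvarcond1} ($N/(JM^2)\to C_1$), the motion contribution $-\tfrac m2\Delta\phi$ of~(\ref{beta branching martingale}). For the reproduction term, substituting $\rho=r^{-\gamma}$ shows that a single event produces a jump $\upsilon\approx (K/J)M^{-\gamma}|\cB_1|\phi(x')(r')^{d-\gamma}$ in $\langle X^N,\phi\rangle$; changing variables from $r'$ to $\upsilon$ turns the radius density $(r')^{\alpha}\dd r'$ into $\upsilon^{-(\beta+2)}\dd\upsilon$ precisely because $\alpha+1=(\beta+1)(\gamma-d)$. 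The indicator $\ind_{\{\alpha=1\}}$ supplies a factor $w^N\approx K^{-1}X^N$ which cancels one power of $K$, and the surviving constants collapse to $\tfrac NJ\big(K/(JM^d)\big)^{\beta}\to C_2$ by Condition~\ref{infvarcond2}. Moreover $K/(JM^d)\to 0$ and $\big(K/(JM^d)\big)J^{(\gamma-d)/\gamma}\to\infty$ (the latter being exactly Condition~\ref{infvarcond3}), so the smallest and largest admissible jumps force the range of $\upsilon$ to fill out $(0,\infty)$, and the reproduction part converges to $\kappa\langle X,\phi^{1+\beta}\rangle\exp(-\langle X,\phi\rangle)$ with the stated $\kappa$. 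Together these recover the integrand of~(\ref{beta branching martingale}).

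The genuinely delicate step, which I expect to be the main obstacle, is to justify $\IE[\ind_{\{\alpha=1\}}]\approx$ the local value of $w^N$ as a true linearisation, i.e. to show that distinct type-$1$ clusters evolve asymptotically independently so that a branching, rather than a resampling, structure survives in the limit. This is where the sparseness hypotheses~(\ref{infvarrw}) enter, and I would control them through the dual of Definition~\ref{defn of dual} and Proposition~\ref{prop: dual}: the error terms in the prelimit martingale problem can be written as pairwise functionals of the dual lineages, and one must show that the probability that two ancestral lineages are ever hit by a common event tends to zero. Since the lineages move like rescaled random walks converging to Brownian motions, this reduces to a meeting/coalescence estimate, which is straightforward for $d\geq3$ by transience but marginal for $d=1,2$, where~(\ref{infvarrw}) provides exactly the quantitative control; these estimates are the business of Section~\ref{indeplinsection}.

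Finally I would establish tightness of $\{X^N\}$ in $D([0,\infty),{\mathcal M}_F(\IR^d))$ (Section~\ref{tightness}) via a compact-containment estimate, available since $\mathrm{supp}(X_0^N)\subseteq D$ and mass spreads at controlled speed, together with an Aldous--Jakubowski criterion applied to $\langle X^N,\phi\rangle$; here one must accommodate the large jumps of the stable branching, so the argument lives in the $J_1$ rather than the uniform topology. Passing to the limit in the prelimit martingale problem, using the generator convergence above and the independence estimates from the dual to kill the error terms, shows that every weak limit point solves~(\ref{beta branching martingale}). Well-posedness of that martingale problem (Definition~\ref{infvarDWdefn}) then identifies the limit uniquely and upgrades tightness to weak convergence, completing the proof in Section~\ref{convergence section}.
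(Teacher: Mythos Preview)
Your proposal is correct and follows essentially the same route as the paper: martingale characterisation via exponential test functions, generator convergence split into a spatial part (Taylor expansion over balls giving $\tfrac m2\Delta\phi$ under Condition~\ref{infvarcond1}) and a branching part (the change of variables $r\mapsto\upsilon$ producing the $\upsilon^{-(\beta+2)}$ density under Conditions~\ref{infvarcond2}--\ref{infvarcond3}), dual-based coalescence estimates under~(\ref{infvarrw}) to kill the quadratic error term, and tightness via compact containment plus Aldous for the projections. The one place where the paper invests substantially more machinery than you indicate is tightness: because the quadratic variation of $\langle X^N,\phi\rangle$ diverges in the stable case, the paper constructs an \emph{approximate} duality between $X^N$ and the solution $v^N_\phi$ of a nonlinear evolution equation (mimicking Dawson's treatment of branching Brownian motion with stable offspring law) and uses this to obtain uniform $(1+\theta)$-moment bounds on $\sup_{s\le T}\langle X^N_s,\phi\rangle$ for $\theta<\beta$, which then feed the Aldous criterion via joint Laplace transforms at the stopping times.
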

Once again Conditions~\ref{infvarcond1}--\ref{infvarcond3} guarantee
tightness of the sequence, whereas~(\ref{infvarrw}) ensures `sparsity'.
For $d=1$ the condition in~(\ref{infvarrw}) is not necessary, even our proof shows that it could
be improved a little, but in this form it is easy to check: since $\gamma>2$
and $(1-\beta)(\gamma-1)<1$, we only have to make sure that 
$J\to\infty$ as $N\to\infty$ sufficiently quickly compared to $M$.
\begin{example}
The conditions of Theorem~\ref{jonoinfinitelimit}
are satisfied if 
\begin{enumerate}
\item{$d\geq 2$ and
\begin{enumerate}
\item $\alpha =\beta$, $\gamma =d+1$, 
\item $J=M^\eta$ with $\eta\geq 2\gamma/\beta$, $M^{2+\eta}=N$, 
$K=JM^{d+2/\beta}$.
\end{enumerate}}
\item{$d=1$ and  
\begin{enumerate}
\item{$\beta=3/4$, $\gamma=3$, $\alpha=5/2$,}
\item{$J=M^\eta$ where $\eta>4$, $M^{2+\eta}=N$, $K=JM^{11/3}$.}
\end{enumerate}}
\end{enumerate}
\end{example}

The structure of the proofs of Theorems~\ref{result fixed radius} 
and~\ref{jonoinfinitelimit} will come as no surprise. We establish
tightness of the sequences of rescaled processes and then check that all
limit points satisfy an appropriate martingale problem. The first step
will be to write down the martingale characterisation of the the scaled SLFV
and manipulate it into a form that resembles the desired limit. Tightness
for the fixed radius case is then highly reminiscent of the arguments
in \cite{cox/durrett/perkins:2000}.
To prove tightness in the variable radius case, we 
modify the arguments used in constructing 
superBrownian motion with a stable branching mechanism as the limit of a 
sequence of branching Brownian motions (although the calculations here are
somewhat more involved). In both cases, a key step in identifying the 
limit is to establish
control over the probability that two individuals sampled from the same small
region in the SLFV are close relatives. This is also reminiscent of \cite{cox/durrett/perkins:2000}, being based on estimates for the 
coalescing dual of the SLFV.

\subsection{Heuristics}
\label{heuristics}

Before proceeding to the proofs, let us try to motivate the scalings in 
Theorem~\ref{result fixed radius}
and (at least some of those in) Theorem~\ref{jonoinfinitelimit}.

\subsubsection{Fixed radius}

First consider the fixed radius case. If superBrownian motion really 
is a good approximation
for the type $1$ population, then, in particular, we expect that the 
motion of a single
ancestral lineage in the SLFV should converge to Brownian motion. 
Events that affect regions in which a given lineage lies fall according 
to a Poisson process
with rate proportional to $N$, and, for each such event,
the chance that the lineage is 
affected by it
is $u/J$. Thus the lineage will jump at rate proportional to $N/J$. 
Each jump is 
mean zero, finite variance, and $\cO(1/M)$. In order to obtain a Brownian 
limit, we seek a 
diffusive rescaling; that is $N/(JM^2)$ should converge.

Next recall that in $d\geq 2$ superBrownian motion is a two-dimensional 
object, whereas the support of the SLFV has the same dimension as the 
state space. The scaling of the
impact of each event dictates that an `atom' of 
mass is $\cO(1/J)$, so in order that the number of atoms in a region of
diameter $M$ scale like $KM^2$ (as it would for a two-dimensional
object) we take $KM^2\sim J M^d$, which if $N/(JM^2)$ converges says
that $KN/(J^2M^d)$ should converge.

The large parameter $K$ controls the total population density, but we must
still ensure that the population of rare alleles in our scaled SLFV is
sufficiently `sparse' if we are to recover superBrownian motion.
In the SLFV, the density of the population is strictly regulated, creating a 
strong dependence between the mass born during a reproduction event and that
which dies. In contrast, in superBrownian motion, once born, `individuals' 
reproduce and die independently of one another. In order to ensure that the 
dependence inherent in the SLFV is not apparent to us when we follow just a 
single (rare) type, we should like to know that if we sample individuals from
the same small region they are not likely to be 
close relatives. In this way we 
can guarantee
that individuals are not victims of reproduction events in which their own 
close family reproduces. 

To check whether two individuals sampled from very close to one another
are close relatives, we follow the dual process of ancestral lineages.
We should like them to move apart to a distance of $\cO(1)$ in the scaled
process (without coalescing). 
If both lineages are in the region affected by an event, then the
chance that they are both affected (and therefore coalesce) given that at least one of them
jumps is of order $1/J$. On the other hand, it only takes a finite number of events
in which only one of the lineages jumps before they are sufficiently far apart
that they cannot be affected by the same event and so 
evolve independently. We then 
think of them as making an excursion away
from one another, before they once again come close enough that they are  
susceptible to coalescence. The number of such excursions 
before we see one in which they move apart to a distance of $\cO(1)$ 
(after scaling) 
has mean $\cO(M)$ in $d=1$, $\cO(\log M)$ in $d=2$ and $\cO(1)$ in 
$d\geq 3$. Since at the end of each excursion, the chance that
the lineages will coalesce rather than starting the next excursion is
proportional to $1/J$, we see that our `sparsity' 
conditions~(\ref{random walk conditions}) 
ensure that the probability that they successfully `escape' to a 
distance of $\cO(1)$ from one another tends to one.
This is the intuition underlying the calculations in
Section~\ref{indeplinsection}.

\subsubsection{Variable radii}

Now we turn to the case of variable radii, from which we are trying to 
extract a superBrownian motion with stable branching mechanism.

In the non-spatial setting, \cite{bertoin/legall:2006} recover a stable branching process with parameter $\beta$ from a 
Lambda-Fleming-Viot process in much the same way as we recovered the 
Feller branching process from the Wright-Fisher diffusion in the 
introduction. 
The Lambda-Fleming-Viot process
is driven by a Poisson point process $\widetilde\Pi$ on 
$[0,\infty)\times (0,1]$.
A point $(t,\rho)\in\widetilde\Pi$ signals a 
reproduction event at time $t$ in which a proportion $\rho$ of the 
population is replaced by offspring of a randomly chosen parent.
The intensity of $\widetilde\Pi$ is $\dd t\otimes \Lambda(\dd \rho)/\rho^2$,
where $\Lambda$ is such that the tail
$\Lambda([\varepsilon,1])$ is regularly varying with index $-(1+\beta)$ as
$\varepsilon\to 0$.  
A simple `back of the envelope' calculation illustrates why
Bertoin and Le~Gall's result should hold. To be completely concrete, we
present it in the special case
$\Lambda(\dd \rho)=C(\beta)\rho^{-\beta}(1-\rho)^{\beta}\dd \rho$,
corresponding to the Lambda-Fleming-Viot process which is dual to 
a so-called Beta-coalescent and, as shown in \cite{birkner/blath/capaldo/etal:2005},
is a timechange of the stable branching process.

Once again let $K$ be the total population size and
consider a rare allele that makes up a proportion $w$ of the population. We
are interested in the absolute number, $X=Kw$ of rare alleles. We apply
the infinitesimal generator of $X$ to a test function of the form 
$\exp(-\theta X)$, where $\theta\geq 0$. This yields
\begin{eqnarray*}
\cL^X(e^{-\theta X})&=&
\int_0^1\left\{\frac{X}{K}e^{-\theta(X+K\rho(1-X/K)}+\big(1-\frac{X}{K}\big)
e^{-\theta X(1-\rho)}-e^{-\theta X}\right\}\frac{\Lambda(\dd \rho)}{\rho^2}\\
&=&
C(\beta)\int_0^Ke^{-\theta X}\left\{\frac{X}{K}e^{-\theta v (1-X/K)}
+\big(1-\frac{X}{K}\big)e^{\theta X v/K}-1\right\}
\frac{(1-v/K)^\beta}{(v/K)^{\beta +2}}\frac{1}{K}\dd v\\
&\approx& C(\beta)K^\beta Xe^{-\theta X}\int_0^\infty
\left\{e^{-\theta v}+\theta v -1\right\}
\frac{1}{v^{\beta +2}}\dd v\\
&=&C K^\beta Xe^{-\theta X} \theta^{\beta +1},
\end{eqnarray*}
which we recognise as the infinitesimal generator of the stable branching
process, 
timechanged by a factor proportional to $K^\beta$. 
Recalling the construction of this Lambda-Fleming-Viot process from 
individual based models, for example as in \cite{schweinsberg:2003},
we see that the evolution of a population of size $K$ should be 
compared to the Lambda-Fleming-Viot process on the timescale $1/K^\beta$
(just as we see a factor $1/K$ in the Wright-Fisher diffusion~(\ref{WF spde})).
This precisely cancels the $K^\beta$ we see here.
This calculation confirms that the emergence of the stable branching process 
was dictated by the behaviour of the measure $\Lambda(\dd \rho)$ close to
$\rho=0$.

If we are to extract infinite variance superBrownian motion from 
an SLFV in an analogous way, 
we must have random event radii and, since the spatial motion is
bound up in reproduction events, if in the limit the spatial motion is
to be continuous, the impact 
will depend on the
event size in a nontrivial way. 
In order to make the calculations tractable, 
we fix the impact to be a negative power of the 
radius of events and
we take $\mu^N(\dd r)$ to be a truncated power
law. The purpose of the truncation is two-fold: first, by bounding the 
radii below we force the impact of each event to lie in $(0,1]$;
second, by bounding radii above, we ensure that the jumps of lineages 
in the unscaled SLFV 
have finite moments of all orders.

As for the fixed radius case, we should like the motion of a single 
ancestral lineage to converge to Brownian motion. A lineage will fall in 
the region affected by an event of (scaled) radius $r/M$ at rate
$Nr^d \mu^N(\mathrm{d}r)$ in which case, with probability $\rho/J$ (with
$\rho$ sampled from $\nu_r(\mathrm{d} \rho)$), it
will make a mean zero, finite variance jump of size of order $r/M$. 
Substituting our chosen form of $\mu^N(\mathrm{d} r)\nu_r(\mathrm{d} \rho)$,
we see that in order to obtain a Brownian limit for the motion of 
lineages, we should require convergence of
\begin{align*}
\int_0^\infty \frac{N u(r)r^{d+2}}{JM^2} \; \mu^N(\mathrm{d}r) 
=\int_{J^{-1/\gamma}}^1\frac{N}{JM^2}r^{d+2+\alpha-\gamma}\dd r
.
\end{align*}
Under our conditions on $\alpha$, $\gamma$, this implies that 
$N/(JM^2)$ should converge, just as in the fixed radius case.

We now turn to recovery of the stable branching mechanism. When an event of
radius $r$ falls, the total mass of the offspring in the rescaled 
population process
is $K/(JM^d)$ times
$v(r)=C(d)r^{d-\gamma}$. Such events fall on a given point $x$, and an 
`individual' at that point
is selected as parent of the event, 
at rate $(N/K) r^{\alpha}\dd r \dd x$ (a factor of $C(d)r^d$ in the 
rate at which events of radius $r$ cover the point $x$ has cancelled with
the reciprocal of
the same factor in the probability that $x$ is the point selected uniformly
at random from within the ball to be the location of the parent). 
Observing that $dv(r)=C(d)(d-\gamma)r^{d-\gamma-1}\dd r$ (and thus
substituting for $r^\alpha \dd r$), we see that for a 
given `individual' at $x$, events in which it produces offspring with mass
proportional to $v$ occur at 
rate proportional to
$v^{-(\gamma-d+\alpha+1)/(\gamma-d)}\dd v \dd x$. We need to match 
this with the $v^{-\beta-2}$
of the non-spatial case close to $v=0$, and so we take 
$\alpha+1=(\beta +1)(\gamma-d)$.

A more careful version of the argument in the previous paragraph leads to 
Conditions~\ref{infvarcond2} and~\ref{infvarcond3}.
In fact Condition~\ref{infvarcond2} can be 
understood in terms of the dimension of the limit in much the same way as the
corresponding condition for the fixed radius case.
In the stable case, the Hausdorff dimension of the support of
the limiting superBrownian motion will be $d\wedge (2/\beta)$, so at 
least in high enough dimensions we expect to need $JM^d\sim KM^{2/\beta}$, which
combined with convergence of $N/(JM^2)$ leads to
Condition~\ref{infvarcond2}.
Conditons~(\ref{infvarrw}), which follow from the same
considerations as in the fixed radius case, ensure sufficient `sparsity'.

\section{Martingale characterisation of the process $\{X_t^N\}_{t\geq 0}$}
\label{martingale problem prelimit}

In this section we characterise the distribution of the scaled 
process $\{X_t^N\}_{t\geq 0}$ as a 
solution to a martingale problem. 
It will be convenient to use different formulations of the martingale
problem for
our two scalings. First we need some notation.
Suppose that $\phi\in C_0^3(\IR^d)$
and that $f\in C_0^2(\IR)$.
We use the notation
\begin{align*}
\cL_f^N(\phi)(X_0):= \lim_{t \to 0} \frac{\IE^{X_0}[f(\ldual X^N_t, \phi \rdual) ] 
- f(\ldual X^N_0 , \phi \rdual)}{t}
\end{align*}
for the infinitesimal generator of the measure-valued process $X^N_t$ 
applied to test functions of the form $F(X^N_t)=f(\ldual X^N_t,\phi\rdual)$,
and, with a slight abuse of notation, $\cL^N (\phi)(X^N_0)$ for the 
corresponding quantity when $f(x)\equiv x$.

Notice that we have not assumed that $\phi$ has compact support.
Because $X^N_0$ has 
compact support, the rate at which that support is overlapped by 
a reproduction event is 
bounded, and such an event can only increase the volume of the
support of $X^N$ 
by an amount 
bounded above by the volume of the event (which in turn is uniformly 
bounded). Iterating this argument and comparing to a pure birth process, it
is evident that the support will remain bounded (indeed compact)
up to any finite time, and so the rate of events affecting $X^N$ is bounded. 

Writing down the generator is now standard as our process evolves according to a series of
jumps of finite rate.
Recall that at each point $(x,t,r,\rho)\in\Pi^N$, the scaled process $w^N$ is subject
to the reproduction event associated with the quadruple 
$(\frac{x}{M}, \frac{t}{N}, \frac{r}{M}, \frac{\rho}{J})$. 
At such an event, within the ball $\cB_{r/M}(x)$ we have two possibilities:
$$\left\{
\begin{array}{ll}
w^N_t(y)-w_{t-}^N(y) = -\frac{\rho}{J}w^N_{t-}(y) + \frac{\rho}{J} &\text{ with probability } w_{t-}^N(z), \\ \\
w^N_t(y) -w_{t-}^N(y) = -\frac{\rho}{J}w^N_{t-}(y) &\text{ with probability } 1 - w_{t-}^N(z)
.
\end{array} \right.
$$
We write $\cB_r^M(x)$ for the ball $\cB_{r/M}(x)$ and $|\cB_r^M|=|\cB_r|/M^d$ for its 
volume.
At the time of the first event to affect $X^N$, we have $w^N_{t-}=w^N_0$ and
since, moreover, we only consider $\nu_r$ of the form $\delta_{u(r)}$, 
we find
\begin{multline*}
\cL^N_f(\phi)(X^N_0)
= 
N M^d
\Bigg[
\int_{\IR^d}
\int_0^\infty
\int_{\cB_r^M(x)} 
 \frac{1}{|\cB_r^M|}\\ 
w^N_0(z) f \Bigg( K\frac{u(r)}{J} \int_{\cB_r^M(x)}  \phi(y)  \dd y 
 + K\left(1-\frac{u(r)}{J}\right) \int_{\cB_r^M(x)} \phi(y) w^N_0(y) \dd y  
 + K\int_{\IR^d \backslash \cB_r^M(x)} \phi(y) w^N_0(y) \dd y \Bigg)   \\
+ \big( 1 - w^N_0(z) \big)
f \Bigg( 
K\left(1-\frac{u(r)}{J}\right) \int_{\cB_r^M(x)} \phi(y) w^N_0(y) \dd y  
 + K\int_{\IR^d \backslash {\cB_r^M(x)}} \phi(y) w^N_0(y) \dd y \Bigg) \\
-
f \bigg( K\int_{\IR^d} \phi(y) w^N_0(y) \dd y \bigg) \dd z \; \mu^N(\mathrm{d}r) \dd x \Bigg].
\end{multline*}
Substituting $Kw^N_0=X^N_0$ this becomes
\begin{multline}
\label{infgen general case}
\cL^N_f(\phi)(X^N_0)
= 
N M^d
\Bigg[
\int_{\IR^d}
\int_0^\infty
\int_{\cB_r^M(x)} 
 \frac{1}{|\cB_r^M|}  \\
\frac{X^N_0(z)}{K} f \Bigg( K\frac{u(r)}{J} \int_{\cB_r^M}  \phi(y)  \dd y 
 + \left(1-\frac{u(r)}{J}\right) \int_{\cB_r^M(x)} \phi(y) X^N_0(y) \dd y  
 + \int_{\IR^d \backslash {\cB_r^M(x)}} \phi(y) X^N_0(y) \dd y \Bigg)   \\
+ \big( 1 - \frac{X^N_0(z)}{K} \big)
f \Bigg( 
\left(1-\frac{u(r)}{J}\right) \int_{\cB_r^M(x)} \phi(y) X^N_0(y) \dd y  
 + \int_{\IR^d \backslash {\cB_r^M(x)}} \phi(y) X^N_0(y) \dd y \Bigg) \\
-
f \bigg( \int_{\IR^d} \phi(y) X^N_0(y) \dd y \bigg) \dd z \; \mu^N(\mathrm{d}r) \dd x \Bigg].
\end{multline}
Since $\{X_t^N\}_{t\geq 0}$ is a pure jump Markov process, driven by
a Poisson process of jumps, it follows immediately that for $f$ and $\phi$ as
above, 
$$f(\ldual X_t^N,\phi\rdual)-f(\ldual X_0^N,\phi\rdual)
-\int_0^t\cL_f(\phi)(X_s^N)\dd s$$
defines a mean zero local martingale. In the variable radius case, we shall
exploit this with $f(x)=\exp(-x)$ and non-negative $\phi$. In the 
fixed radius case, the following lemma, which follows immediately
on setting $f(x)=x$ and $f(x) = x^2$, will provide a more
convenient tool.
\begin{lemma}
\label{semimartingale decomp without time}
The quantity $\ldual X_t^N , \phi \rdual $ has the semimartingale decomposition:
\begin{equation}
\ldual X_t^N, \phi \rdual = \ldual X_0^N , \phi \rdual + 
\int_0^t \cL^N (\phi)(X_s^N) \dd s + M^N_t(\phi), 
\end{equation}
where
\begin{multline*}
\cL^N (\phi)(X^N_s) =  \\
\int_{\IR^d} \int_0^\infty \int_{\cB_r^M(x)} \frac{Nu(r) M^{2d}}{J|\cB_r|}\bigg\{ 
 \Big{(} \int_{\cB_r^M(x)}\phi(y) \dd y \Big{)} X^N_s(z) - 
\int_{\cB_r^M(x)} \phi(y) X^N_s(y) \dd y \bigg\} \dd z \; \mu^N(\mathrm{d}r)\dd x.
\end{multline*}
The local martingale $M_t^N(\phi)$ has quadratic variation process
\begin{multline} \label{quadvariationequation}
\ldual M^N(\phi) \rdual_t =  \int_0^t \int _{\IR^d} \int_{\cB_r^M(x)} 
\frac{N M^{2d} u(r)^2}{J^2|\cB_{r}|} \bigg\{ \big(1-\frac{X^N_s(z)}{K}\big) 
\Big{(} \int_{\cB_r^M(x)} \phi(y)X^N_s(y) \dd y \Big{)}^2  \\
 + \frac{X^N_s(z)}{K} \Big{(} \int_{\cB_r^M(x)}K\phi(y) \dd y  - 
\int_{\cB_r^M(x)} \phi(y) X^N_s(y) \dd y \Big{)}^2
\bigg\} \dd z \dd x \dd s
.
\end{multline}
\end{lemma}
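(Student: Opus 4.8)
The plan is to read off both the drift and the martingale from the general generator~\eqref{infgen general case} by specialising the test function $f$. First take $f(x)=x$. Writing $A=\int_{\cB_r^M(x)}\phi(y)\dd y$, $B=\int_{\cB_r^M(x)}\phi(y)X_s^N(y)\dd y$ and $C=\int_{\IR^d\setminus\cB_r^M(x)}\phi(y)X_s^N(y)\dd y$, the three $f$-terms become linear, so the dependence on the parental location $z$ enters only through the type weights $X_s^N(z)/K$ and $1-X_s^N(z)/K$. Adding the two weighted contributions and subtracting the ``no event'' value $B+C=\ldual X_s^N,\phi\rdual$, the full-integral term $C$ and the leading part of $B$ cancel against this no-event value, leaving only
\begin{equation*}
\frac{u(r)}{J}\Big(X_s^N(z)A-B\Big)=\frac{u(r)}{J}\Big(X_s^N(z)\int_{\cB_r^M(x)}\phi(y)\dd y-\int_{\cB_r^M(x)}\phi(y)X_s^N(y)\dd y\Big).
\end{equation*}
Multiplying by the rate prefactor $NM^d/|\cB_r^M|=NM^{2d}/|\cB_r|$ and integrating in $z$, $r$, $x$ reproduces the claimed expression for $\cL^N(\phi)(X_s^N)$.

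The semimartingale decomposition is then immediate: as recorded just before the lemma, the compact-support/finite-rate structure of $X^N$ (the number of events affecting $X^N$ up to any finite time is dominated by a pure birth process) makes $f(\ldual X_t^N,\phi\rdual)-f(\ldual X_0^N,\phi\rdual)-\int_0^t\cL^N_f(\phi)(X_s^N)\dd s$ a mean-zero local martingale for every admissible $f$. Taking $f(x)=x$ identifies $M_t^N(\phi):=\ldual X_t^N,\phi\rdual-\ldual X_0^N,\phi\rdual-\int_0^t\cL^N(\phi)(X_s^N)\dd s$ as a local martingale, which is exactly the asserted decomposition.

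For the quadratic variation I would take $f(x)=x^2$ in~\eqref{infgen general case}, write $\cL^N_{x^2}(\phi)$ for the resulting quantity, and invoke the elementary carr\'e-du-champ identity: for a finite-rate pure-jump process the predictable quadratic variation of the martingale part of $Y_t=\ldual X_t^N,\phi\rdual$ equals $\int_0^t\big(\cL^N_{x^2}(\phi)(X_s^N)-2Y_s\,\cL^N(\phi)(X_s^N)\big)\dd s$, and since $F(X+\Delta)^2-F(X)^2-2F(X)\big(F(X+\Delta)-F(X)\big)=\big(F(X+\Delta)-F(X)\big)^2$ this coincides with the rate-integral of the squared jump of $\ldual X,\phi\rdual$. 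It then suffices to read off the two possible jumps from the displayed cases for $w_t^N-w_{t-}^N$: a type-$1$ parent (probability $X_s^N(z)/K$) gives $\Delta=\frac{u(r)}{J}\big(\int_{\cB_r^M(x)}K\phi(y)\dd y-\int_{\cB_r^M(x)}\phi(y)X_s^N(y)\dd y\big)$, while a type-$0$ parent (probability $1-X_s^N(z)/K$) gives $\Delta=-\frac{u(r)}{J}\int_{\cB_r^M(x)}\phi(y)X_s^N(y)\dd y$. Squaring, weighting by the two probabilities, and multiplying by the rate prefactor (the factor $u(r)^2/J^2$ combining with $NM^{2d}/|\cB_r|$) reproduces~\eqref{quadvariationequation}.

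I do not expect a genuine obstacle here, since the statement is a bookkeeping consequence of~\eqref{infgen general case}. The only points needing care are the algebraic cancellation in the drift, where one must check that the $\cO(K)$- and $\cO(\ldual X_s^N,\phi\rdual)$-scale terms indeed cancel so that the surviving drift is of order $1/J$, and the observation that the very same local martingale $M^N(\phi)$ underlies both specialisations, so that its quadratic variation is correctly obtained from the $f(x)=x^2$ calculation. The local- rather than true-martingale qualification, and the finiteness of~\eqref{quadvariationequation}, are both controlled by the compact-support bound on the event rate noted above.
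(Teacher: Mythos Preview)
Your proposal is correct and follows exactly the approach the paper takes: the paper states that the lemma ``follows immediately on setting $f(x)=x$ and $f(x)=x^2$'' in the general generator formula~\eqref{infgen general case}, and you have spelled out precisely that computation, including the carr\'e-du-champ identity for the quadratic variation. Your bookkeeping (the cancellation of $C$ and the $(1-u(r)/J)B$ part against the no-event value, and the identification of the two jump sizes) is accurate.
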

It will be convenient to extend the semimartingale decomposition 
to $\phi=\phi(t,x)\in
C_b^{2,3}(\IR\times\IR^d)$. Writing
\begin{multline*}
f(\ldual X_t^N, \phi_t\rdual)- f(\ldual X_0^N, \phi_0\rdual)=
f(\ldual X_t^N, \phi_0\rdual) -f(\ldual X_0^N, \phi_0\rdual)\\
+f'(\ldual X_t^N,\phi_0\rdual)\ldual X_t^N,\phi_t- \phi_0\rdual 
+\cO\big(\ldual X_t^N,\phi_t- \phi_0\rdual^2\|f''\|_\infty\big),
\end{multline*}
we deduce that this extension simply results in 
the additional term
$$f'(\ldual X_0^N,\phi_0\rdual)\ldual X_0^N,\dot\phi_0\rdual$$
in the generator.
In particular, we arrive at the following lemma.
\begin{lemma}
\label{semimartingale decomp with time}
The quantity $\ldual X_t^N , \phi_t \rdual $ has the semimartingale decomposition:
\begin{equation}
\ldual X_t^N, \phi_t \rdual = \ldual X_0^N , \phi_0 \rdual + 
\int_0^t \cL^N (\phi_s)(X_s^N) \dd s + M^N_t(\phi_\cdot), 
\end{equation}
where
\begin{multline*}
\cL^N (\phi_s)(X^N_s) =  \int_{\IR^d} \dot{\phi}_s(y) X^N_s(y) \dd y + \\
\int_{\IR^d} \int_0^\infty \int_{\cB_r^M(x)} \frac{Nu(r) M^{2d}}{J|\cB_r|}\bigg\{ 
 \Big{(} \int_{\cB_r^M(x)}\phi_s(y) dy \Big{)} X^N_s(z) - 
\int_{\cB_r^M(x)} \phi_s(y) X^N_s(y) \dd y \bigg\} \dd z \; \mu^N(\mathrm{d}r)\dd x.
\end{multline*}
\end{lemma}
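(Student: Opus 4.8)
The plan is to treat Lemma~\ref{semimartingale decomp with time} as an extension of Lemma~\ref{semimartingale decomp without time} obtained by tracking the time-inhomogeneous process $(s,X_s^N)$ rather than $X_s^N$ alone. Setting $f(x)=x$ in the expansion displayed just before the statement, the only new contribution relative to the fixed-$\phi$ case is the drift generated by the explicit time dependence of $\phi$; differentiating $\phi_s$ in $s$ produces exactly the extra term $\int_{\IR^d}\dot\phi_s(y)X_s^N(y)\dd y=\ldual X_s^N,\dot\phi_s\rdual$ in $\cL^N(\phi_s)(X_s^N)$. Thus it suffices to show that
\[
M_t^N(\phi_\cdot):=\ldual X_t^N,\phi_t\rdual-\ldual X_0^N,\phi_0\rdual-\int_0^t\cL^N(\phi_s)(X_s^N)\dd s
\]
is a local martingale, with $\cL^N(\phi_s)$ now including the $\dot\phi_s$ term.

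To make this rigorous I would argue by telescoping over a partition $0=t_0<t_1<\cdots<t_n=t$ of mesh $\delta$. Writing
\[
\ldual X_t^N,\phi_t\rdual-\ldual X_0^N,\phi_0\rdual
=\sum_{i=0}^{n-1}\Big(\ldual X_{t_{i+1}}^N,\phi_{t_i}\rdual-\ldual X_{t_i}^N,\phi_{t_i}\rdual\Big)
+\sum_{i=0}^{n-1}\ldual X_{t_{i+1}}^N,\phi_{t_{i+1}}-\phi_{t_i}\rdual,
\]
each summand in the first sum is handled by applying Lemma~\ref{semimartingale decomp without time} to the fixed test function $\phi_{t_i}$ over $[t_i,t_{i+1}]$: by time-homogeneity of the Markov process it contributes the martingale increment $M_{t_{i+1}}^N(\phi_{t_i})-M_{t_i}^N(\phi_{t_i})$ together with $\int_{t_i}^{t_{i+1}}\cL^N(\phi_{t_i})(X_s^N)\dd s$. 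Since $\phi\in C_b^{2,3}(\IR\times\IR^d)$ we have $\phi_{t_{i+1}}-\phi_{t_i}=\int_{t_i}^{t_{i+1}}\dot\phi_s\dd s$, so the second sum equals $\sum_i\int_{t_i}^{t_{i+1}}\ldual X_{t_{i+1}}^N,\dot\phi_s\rdual\dd s$, a Riemann-sum approximation of $\int_0^t\ldual X_s^N,\dot\phi_s\rdual\dd s$. Letting $\delta\to0$, the drift terms converge to $\int_0^t\cL^N(\phi_s)(X_s^N)\dd s$ with the full generator, and the assembled martingale increments converge to $M_t^N(\phi_\cdot)$.

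The main obstacle is to pass to the limit $\delta\to0$ while preserving the martingale property and to verify that the Riemann sums converge to the asserted integrals. Both are controlled by facts already in hand: as noted before Lemma~\ref{semimartingale decomp without time}, $X_s^N$ has compact support that remains bounded on any $[0,t]$, and $\phi$ together with its derivatives up to the relevant orders is bounded, so $s\mapsto\ldual X_s^N,\dot\phi_s\rdual$ is bounded and its Riemann sums converge (using right-continuity of the paths and the modulus of continuity of $\dot\phi$ to replace $X_{t_{i+1}}^N$ by $X_s^N$). For the martingale part I would establish $L^2$-convergence of the summed increments via the quadratic-variation bound~(\ref{quadvariationequation}), which is uniformly finite under the compact-support hypothesis; this guarantees that the $L^2$-limit is again a local martingale. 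Finally, the second-order Taylor error, of order $\ldual X_{t_{i+1}}^N,\phi_{t_{i+1}}-\phi_{t_i}\rdual^2=\cO(\delta^2)$ per step, sums to $\cO(\delta)$ and vanishes, so no further terms survive in the limit.
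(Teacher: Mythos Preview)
Your proposal is correct and follows essentially the same approach as the paper: both derive the extra $\ldual X_s^N,\dot\phi_s\rdual$ term by Taylor-expanding in the time variable of $\phi$, with the paper doing this infinitesimally at the generator level (the display just before the lemma) and you spelling out the corresponding telescoping/Riemann-sum argument over a partition. One minor redundancy: since you have set $f(x)=x$, the second-order Taylor remainder you mention at the end vanishes identically (because $f''=0$), so that final sentence is unnecessary.
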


\setcounter{equation}{0}

\section{Identifying the limit}
\label{limits}

We now turn to identifying the possible limit points of the 
sequence of processes 
$\{X^N\}_{N\geq 1}$ under our chosen scalings, deferring the proof
of tightness of the sequence to Section~\ref{tightness}.

\subsection{Spatial motion}
\label{bounded variation part}

Although our
scaling and limits are quite different, our calculations are
reminiscent of those
of \cite{berestycki/etheridge/veber:2013}.
Our aim is to find an approximate form of the 
martingale problem that is close to that for
superBrownian motion. 

For simplicity we take $\phi$ to be constant in time.
An interchange of integrals
followed by an interchange of the r\^oles of $x$ and $y$ in our notation  
yields
\begin{align*}
\int_{\cB_r^M(x)}\int_{\cB_r^M(x)} \phi(y) & \big( X^N_s(z) - X^N_s(y) \big) \dd y \dd z \\
= & \int_{\cB_r^M(x)}\int_{\cB_r^M(x)} \phi(z) X^N_s(y) \dd y \dd z - 
\int_{\cB_r^M(x)}\int_{\cB_r^M(x)} \phi(y) X^N_s(y) \dd y \dd z \\
= & \int_{\cB_r^M(x)}\int_{\cB_r^M(x)} \big( \phi(z) - \phi(x) + \phi(x) - 
\phi(y) \big) X^N_s(y) \dd y \dd z
.
\end{align*}
We use the Taylor expansion
\begin{align*}
\phi(y) = \phi(x) + \nabla \phi(x) (y-x) +  \frac{1}{2}\sum_{|\alpha|=2} D_\alpha \phi(x)(y-x)^\alpha 
+ \sum_{|\alpha|=3}R_{\alpha}(y)\big((y-x)^{\alpha} \big),
\end{align*}
where $\|R_{\alpha}\|_\infty \leq C\|\phi\|_{C^3}$, with $C$ 
independent of $\phi$, to obtain
\begin{align*}
 & \int_{\cB_r^M(x)}\int_{\cB_r^M(x)} \big( \phi(z)-\phi(x) \big) X_s^M(y) \dd z\; \dd y + 
\int_{\cB_r^M(x)}\int_{\cB_r^M(x)} \big( \phi(x)-\phi(y) \big) X_s^M(y) \dd z\; \dd y \\ 
= &
\int_{\cB_r^M(x)}\int_{\cB_r^M(x)} \bigg( \nabla \phi(x) (z-x) + 
\frac{1}{2}\sum_{|\alpha|=2} D_\alpha \phi(x)(z-x)^\alpha + 
\sum_{|\alpha|=3}R_{\alpha}(z)(z-x)^{\alpha} \bigg) X_s^M(y) \dd z\; \dd y \\
&\phantom{AAAAAAAAAAAAAAAAAAA} 
 + \frac{|\cB_r|}{M^d}\int_{\cB_r^M(x)} \big( \phi(x)-\phi(y) \big) X_s^M(y) \dd y  \\
= & \bigg\{ \frac{C(d) r^{d+2}}{M^{d+2}} \frac{\Delta \phi(x)}{2}
+ {\mathcal O}\big(\frac{\|\phi\|_{C^3}r^{d+3}}{M^{d+3}}\big) \bigg\} 
\int_{\cB_r^M(x)} X^N_s(y) \dd y 
+ \frac{|\cB_r|}{M^d}\int_{\cB_r^M(x)} \big( \phi(x)-\phi(y) \big) X_s^M(y) \dd y,
\end{align*}
where $C(d) := \int_{\cB_1(0)} x^2 \dd x$.
Moreover,
\begin{align}
\nonumber
\int_{\IR^d} \int_{\cB_r^M(x)} \big(\phi(x) - \phi(y) \big) X^N_s(y) \dd y \; \dd x 
= &
\int_{\IR^d} \int_{\IR^d} \ind_{\{|x-y| \leq \frac{r}{M}\}} \big(\phi(x) - 
\phi(y) \big) X^N_s(y)  \dd y \; \dd x \\
\nonumber
= &
\int_{\IR^d } \int_{\IR^d} \ind_{\{|x-y| \leq \frac{r}{M}\}} \big(\phi(x) - 
\phi(y) \big) X^N_s(y)  \dd x \; \dd y \\
= &
\int_{\IR^d} \int_{\cB_r^M(y)} \big(\phi(x) - \phi(y) \big) X^N_s(y) \dd x \; \dd y
.
\label{manipulation}
\end{align}
Again using Taylor's Theorem (and interchanging the role of $x$ and $y$) this
is
\begin{multline*}
\int_{\IR^d} \int_{\cB_r^M(x)} \big(\phi(y) - \phi(x) \big) X^N_s(x) \dd y \; \dd x \\
=
\int_{\IR^d}\int_{\cB_r^M(x)} \bigg( \nabla \phi(x) (y-x) +  
\frac{1}{2}\sum_{|\alpha|=2} D_\alpha \phi(x)(z-x)^\alpha 
 +  \sum_{|\alpha|=3}R_{\alpha}(y)\big((y-x)^{\alpha} \big) \bigg) X^N_s(x) \dd y\; dd x \\
=
\int_{\IR^d} \bigg\{ \frac{C(d) r^{d+2}}{M^{d+2}} \frac{\Delta \phi(x)}{2}
+ \cO\Big(\frac{\|\phi\|_{C^3}r^{d+3}}{M^{d+3}}\Big) \bigg\} X^N_s(x) \dd x
.
\end{multline*}
Combining the above with our expression for $\cL^N(\phi)$ 
from Lemma~\ref{semimartingale decomp without time}, for the fixed radius
case
we obtain
\begin{align*}
\int_0^t \cL^N (\phi)(X^N_s)\dd s
= & 
\frac{NM^{2d}u}{J|\cB_r|} \int_0^t \int_{\IR^d} 
\bigg\{ \frac{C(d) r^{d+2}}{M^{d+2}} \frac{\Delta \phi(x)}{2}+ 
	\cO\Big(\frac{\|\phi\|_{C^3}r^{d+3}}{M^{d+3}}\Big) \bigg\} \\
 & \phantom{AAAAAA}\times 
\bigg( \int_{\cB_r^M(x)} X^N_s(y) \dd y + \frac{|\cB_r|}{M^d}X^N_s(x) \bigg) \dd x \dd s \\
= & \int_0^t \int_{\IR^d} \bigg\{ \frac{C(d)r^{d+2}Nu}{JM^{2}} 
	\frac{\Delta \phi(x)}{2}+ \cO\Big(\frac{Nu\|\phi\|_{C^3}r^{d+3}}{JM^{3}}\Big) \bigg\} \\
&  \phantom{AAAAAA}\times
\bigg( \frac{M^d}{|\cB_r|}\int_{\cB_r^M(x)} X^N_s(y) \dd y + 
X^N_s(x) \bigg) \dd x \dd s
.
\end{align*}
We note that, using the same manipulation as in~(\ref{manipulation}),
$$\int_{\IR^d}\int_{\cB_r^M(x)}\Delta\phi(x)X_s^N(y)\dd y\; \dd x
=\int_{\IR^d}\int_{\cB_r^M(x)}\Delta\phi(y)X_s^N(x) \dd y\; \dd x,$$
and so since $\phi \in C_b^3$, a Taylor expansion yields,
\begin{align*}
\int_0^t \cL^N (\phi)(X^N_s) \dd s
= & 
\int_0^t \int_{\IR^d} \bigg\{ \frac{C(d)r^{d+2}Nu}{JM^{2}} \Delta \phi(x)+ 
	\cO\Big(\frac{Nu\|\phi\|_{C^3}r^{d+3}}{JM^{3}}\Big) \bigg\}
X^N_s(x) \dd x \dd s
\\
= &
\int_0^t \ldual X^N_s, \frac{C(d)r^{d+2}Nu}{JM^{2}} \Delta \phi \rdual
+
	\cO\Big(\frac{Nu\|\phi\|_{C^3}r^{d+3}}{JM^{3}}\Big) \ldual X^N_s, \ind \rdual
\dd s
.
\end{align*}
For the variable radius case, we integrate this expression against
$\nu_r(\mathrm{d} u)\mu^N(\mathrm{d} r)$.
\begin{defn} \label{spatialgenerator}
We denote by $\cA^N$ the operator
\begin{align*}
\cA^N(\phi) := \frac{m(N)}{2}
\Delta \phi
,
\end{align*}
where in the fixed radius case
$$m(N):=\frac{2C(d) Nu r^{d+2}}{JM^{2}},$$ 
and in the variable radius case
\begin{equation}
\nonumber
m(N):=\frac{2C(d)N}{JM^2}\int_0^\infty u(r)r^{d+2}\mu^N(\dd r),
\end{equation}
with $C(d) := \int_{\cB_1(0)} x^2 \dd x$ and $u(r)=r^{-\gamma}$.
\end{defn}
Evidently it is straightforward to extend the calculation above to suitable 
time dependent $\phi_{\cdot}$. We record the result as a lemma.
\begin{lemma} \label{roughgenerator}
For $\phi_s(x):\IR\times \IR^d\to \IR\in C_0^{2,3}$,
\begin{align*}
\int_0^t \cL^N (\phi_s)(X^N_s) & \dd s =  
\int_0^t  \ldual X^N_s, \dot{\phi}_s \rdual + 
	\ldual X^N_s, \cA^N(\phi_s) \rdual \dd s + \zeta_t^N(\phi)
,
\end{align*}
where $|\zeta_t^N(\phi)| \leq
\cO\left(\frac{N\sup_{0\leq s\leq t}\|\phi_s\|_{C^3}}{JM^3}
\int_0^\infty u(r)r^{d+3}\mu^N(\dd r)
\right)
\int_0^t \ldual X^N_s, \ind \rdual \dd s$.
\end{lemma}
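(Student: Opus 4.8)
The plan is to treat Lemma~\ref{roughgenerator} as the time-dependent repackaging of the computation already carried out in this subsection for time-independent $\phi$: I would re-run that computation at each frozen time $s$ with $\phi$ replaced by $\phi_s$, account separately for the time derivative, and then collect the remainders into $\zeta_t^N(\phi)$.

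First I would invoke Lemma~\ref{semimartingale decomp with time}, which shows that for time-dependent $\phi_\cdot$ the generator $\cL^N(\phi_s)(X_s^N)$ equals $\int_{\IR^d}\dot\phi_s(y)X_s^N(y)\dd y$ plus exactly the spatial expression appearing in Lemma~\ref{semimartingale decomp without time}, now evaluated with the test function $\phi_s$ at the frozen time $s$. The first summand is precisely $\ldual X_s^N,\dot\phi_s\rdual$, which is the drift contribution in the statement, so it remains only to handle the spatial part.

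For the spatial part I would apply, verbatim and at each fixed $s$, the chain of manipulations already displayed above: interchanging the order of integration, swapping the roles of $x$ and $y$, and Taylor-expanding $\phi_s$ to third order about $x$. In the fixed radius case this yields the leading term $\ldual X_s^N,\frac{C(d)r^{d+2}Nu}{JM^2}\Delta\phi_s\rdual$, which by Definition~\ref{spatialgenerator} is exactly $\ldual X_s^N,\cA^N(\phi_s)\rdual$, together with a remainder of order $\frac{Nu\,r^{d+3}}{JM^3}\|\phi_s\|_{C^3}$ times the total mass $\ldual X_s^N,\ind\rdual$. For the variable radius case I would integrate this identity against $\nu_r(\dd u)\mu^N(\dd r)$ with $u=u(r)=r^{-\gamma}$: the leading coefficient then becomes $\frac{C(d)N}{JM^2}\int_0^\infty u(r)r^{d+2}\mu^N(\dd r)=m(N)/2$, matching $\cA^N(\phi_s)=\frac{m(N)}{2}\Delta\phi_s$, while the remainder coefficient becomes $\frac{N}{JM^3}\int_0^\infty u(r)r^{d+3}\mu^N(\dd r)$, exactly the factor in the stated bound.

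Finally I would integrate in $s$ over $[0,t]$ and absorb every $\cO$-remainder into $\zeta_t^N(\phi)$. The only point needing (minor) care is uniformity in time: since the third-order Taylor remainders satisfy $\|R_\alpha\|_\infty\le C\|\phi_s\|_{C^3}$ with $C$ independent of $\phi$, I can replace each occurrence of $\|\phi_s\|_{C^3}$ by $\sup_{0\le s\le t}\|\phi_s\|_{C^3}$, which is finite because $\phi\in C_0^{2,3}(\IR\times\IR^d)$, pull it out of the time integral, and leave the surviving factor $\int_0^t\ldual X_s^N,\ind\rdual\dd s$ as in the statement. There is no genuine obstacle here: the lemma is a straightforward restatement of the preceding display for time-varying test functions, and the work is purely organisational.
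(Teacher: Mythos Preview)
Your proposal is correct and matches the paper's approach exactly: the paper does not give a separate proof of this lemma, stating only that ``evidently it is straightforward to extend the calculation above to suitable time dependent $\phi_\cdot$'' and then recording the result. Your write-up simply makes explicit the two steps the paper leaves implicit---invoking Lemma~\ref{semimartingale decomp with time} to split off $\ldual X_s^N,\dot\phi_s\rdual$, and then rerunning the Taylor-expansion computation of the subsection at each frozen $s$---together with the bookkeeping of the $\sup_{0\le s\le t}\|\phi_s\|_{C^3}$ factor.
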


\subsection{Fixed radius case}

We now turn to identification of the limit as $N\to\infty$ of the
quadratic variation $\lquad M^N(\phi)\rquad_t$ 
of~(\ref{quadvariationequation}) in the fixed radius case.
Recall that
\begin{multline*}
\lquad M^N(\phi) \rquad_t =
\int_0^t \int _{\IR^d} \int_{\cB_r^M(x)} \frac{N M^{2d} u^2}{J^2|\cB_{r}(x)|}
\bigg\{ \big(1-\frac{X^N_s(z)}{K}\big) \Big{(} \int_{\cB_r^M(x)} 
\phi(y)X^N_s(y) \dd y \Big{)}^2  \\
 + \frac{X^N_s(z)}{K} \Big{(} K\int_{\cB_r^M(x)}\phi(y) \dd y  - 
\int_{\cB_r^M(x)} \phi(y) X^N_s(y) \dd y \Big{)}^2
\bigg\} \dd z \dd x \dd s
.
\end{multline*}
Expanding the brackets we see that
\begin{multline*}
\lquad M^N(\phi) \rquad_t =
\int_0^t \int _{\IR^d} \int_{\cB_r^M(x)} \frac{KN M^{2d} u^2}{J^2 |\cB_r|}
\Bigg[
X^N_s(z) \Big( \int_{\cB_r^M(x)}\phi(y) \dd y\Big)^2
\\ -
2 \frac{X^N_s(z)}{K}
\bigg(
\Big(
\int_{\cB_r^M(x)} \phi(y) \dd y
\Big)
\Big(
\int_{\cB_r^M(x)} \phi(y) X^N_s(y) \dd y
\Big)
\bigg)
\\ +\frac{1}{K}
\Big(
\int_{\cB_r^M(x)} \phi(y) X^N_s(y) \dd y 
\Big)^2
\Bigg]
\dd z \dd x \dd s
,
\end{multline*}
which can be rearranged to yield
\begin{multline}
\label{qv1}
\lquad M^N(\phi) \rquad_t =
\frac{KN |\cB_r|^2 u^2}{J^2 M^d}
\int_0^t \int _{\IR^d} \frac{1}{|\cB_r^M|}
\int_{\cB_r^M(x)} 
\Bigg[
X^N_s(z) \Big(
\frac{1}{|\cB_r^M|}
\int_{\cB_r^M(x)}\phi(y) \dd y\Big)^2
\\ -
2 \frac{X^N_s(z)}{K}
\bigg(
\Big(
\frac{1}{|\cB_r^M|}
\int_{\cB_r^M(x)} \phi(y) \dd y
\Big)
\Big(
\frac{1}{|\cB_r^M|}
\int_{\cB_r^M(x)} \phi(y) X^N_s(y) \dd y
\Big)
\bigg)
\\
+
\frac{1}{K}
\Big(
\frac{1}{|\cB_r^M|}
\int_{\cB_r^M(x)} \phi(y) X^N_s(y) \dd y 
\Big)^2
\Bigg]
\dd z \dd x \dd s
.
\end{multline}
Mimicking the manipulation that gave us~(\ref{manipulation}), and using the
regularity of $\phi$, this can be written
\begin{multline}
\label{quadratic variation}
\lquad M^N(\phi) \rquad_t =
\frac{KN |\cB_r|^2 u^2}{J^2 M^d}
\int_0^t \int _{\IR^d}
\Bigg[
X^N_s(x) \phi^2(x)
-\frac{1}{K}
\Big(
\frac{1}{|\cB_r^M|}
\int_{\cB_r^M(x)} \phi(y) X^N_s(y) \dd y 
\Big)^2
\\
+ \cO\Big(\frac{\|\phi\|_{C^1}^2}{M}\Big) X^N_s(x)
\Bigg]
\dd x \dd s
,
\end{multline}
where we used that if $z\in \cB_r^M(x)$ and $y\in \cB_r^M(x)$, then $|z-y|<2/M$ and 
$X/K\leq 1$ to estimate the error in replacing the second term
in~(\ref{qv1}) by twice the 
third. 
The necessity of 
Conditions~\ref{fixed radius cond1}--\ref{condition 3 of finite variance case} 
of Theorem~\ref{result fixed radius} is already evident 
from~(\ref{quadratic variation}).
This result will be sufficient for the proof of tightness, but more work will
be needed to check that the second term on the right tends to zero as
$N\to \infty$, and thus identify the limiting quadratic variation as 
that corresponding to superBrownian motion. 
The proof of the following lemma, which 
we defer to Section~\ref{indeplinsection}, 
rests on the duality of Proposition~\ref{prop: dual}.
\begin{lemma}
\label{independence requirement fixed radius}
Under the conditions of Theorem~\ref{result fixed radius},
for any $\phi\in C_0^3(\IR^d)$,
$$
\IE\left[\int_0^t \int _{\IR^d}
\frac{1}{K}
\Big(
\frac{1}{|\cB_r^M|}
\int_{\cB_r^M(x)} \phi(y) X^N_s(y) \dd y 
\Big)^2 \dd x \dd s\right] \to 0
$$ 
as $N\to\infty$.
\end{lemma}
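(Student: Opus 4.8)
The plan is to reduce the whole quantity to a \emph{two-lineage} computation via the duality of Proposition~\ref{prop: dual}, and then to isolate two independent mechanisms: a genuinely small contribution from pairs of nearby lineages that \emph{coalesce} (controlled by the sparsity conditions~(\ref{random walk conditions})), and a \emph{bulk} contribution that is negligible merely because of the explicit factor $1/K$ together with $K\to\infty$.

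First I would substitute $X_s^N=Kw_s^N$ and expand the square as a double integral, so that the integrand becomes $\frac{K}{|\cB_r^M|^2}\int_{\cB_r^M(x)}\int_{\cB_r^M(x)}\phi(y)\phi(y')\,w_s^N(y)w_s^N(y')\dd y\dd y'$. Taking expectations, applying Fubini and integrating out $x$ replaces the product of the two ball indicators by the overlap volume $|\cB_r^M(y)\cap\cB_r^M(y')|\le|\cB_r^M|$, which is supported on $\{|y-y'|\le 2r/M\}$. Bounding $\phi$ by $\|\phi\|_\infty$, this leaves
$$\IE\Big[\int_0^t\!\int\frac1K\big(\tfrac1{|\cB_r^M|}\!\int_{\cB_r^M(x)}\!\phi X_s^N\big)^2\dd x\dd s\Big]\le\frac{K\|\phi\|_\infty^2}{|\cB_r^M|}\int_0^t\!\iint_{|y-y'|\le 2r/M}\!\IE[w_s^N(y)w_s^N(y')]\dd y\dd y'\dd s.$$
The crucial step is then to apply Proposition~\ref{prop: dual} with $k=2$ to rewrite $\IE[w_s^N(y)w_s^N(y')]=\IE_{\{y,y'\}}\big[\prod_{j=1}^{N_s}w_0^N(\xi_s^j)\big]$, and to split according to whether the two dual lineages have coalesced by time $s$, i.e. $N_s=1$ versus $N_s=2$.

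For each realisation of the dual I would use the spatial translation invariance of the lineage motion (the driving intensity $\dd x\otimes\dd t\otimes\mu(\dd r)\nu_r(\dd\rho)$ is invariant in $x$, so coalescence depends only on relative positions). Changing variables to $y$ and $h=y'-y$ and integrating the centre $y$ out gives $\int_{\IR^d}w_0^N(y+\zeta)\dd y=|X_0^N|/K$ for a single surviving lineage, and $\int_{\IR^d}w_0^N(y+\zeta^1)w_0^N(y+\zeta^2)\dd y=\int w_0^N(z)w_0^N(z+(\zeta^2-\zeta^1))\dd z$ for two. Hence the coalesced part is bounded by $C(\phi,t)\,|X_0^N|\sup_{|h|\le 2r/M}\IP_{\{0,h\}}[N_t=1]$, while the non-coalesced part is bounded by $C(\phi,t)\,\|X_0^N\|_{L^2}^2/K$, using $\int w_0^N(z)w_0^N(z+D)\dd z\le\|w_0^N\|_{L^2}^2=\|X_0^N\|_{L^2}^2/K^2$. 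The second term vanishes as $K\to\infty$ by a uniform $L^2$ (equivalently bounded-density) control on the initial data, to which one may reduce by approximation.

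The remaining, and main, task is to show that $\sup_{|h|\le 2r/M}\IP_{\{0,h\}}[\text{the two lineages coalesce by time }t]\to 0$. This is precisely the sparsity statement anticipated in Section~\ref{heuristics}: two lineages started within one event-width of each other should separate to distance $\cO(1)$ before coalescing. I would prove it by studying the difference of the two lineages as a random walk and counting its excursions away from the coalescence range: each close encounter yields coalescence with probability of order $1/J$, whereas the number of returns to that range before escaping to distance $\cO(1)$ is of order $M$, $\log M$ and $\cO(1)$ in dimensions $1$, $2$ and $\ge 3$ respectively. Matching these against~(\ref{random walk conditions}) forces the total coalescence probability to zero. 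This dual random-walk estimate is the genuinely technical heart of the argument; everything above reduces the lemma to it.
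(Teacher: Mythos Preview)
Your approach is essentially that of the paper: reduce to a two-lineage computation via Proposition~\ref{prop: dual}, split according to whether the dual lineages have coalesced, and use the sparsity conditions~(\ref{random walk conditions}) to kill the coalescence probability. There are, however, two technical points where your route diverges from the paper's and one of them contains a genuine gap.

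First, your treatment of the non-coalesced part does not close. The bound $\int w_0^N(z)w_0^N(z+D)\,\dd z\le\|w_0^N\|_{L^2}^2$ yields, after restoring the prefactor $K$, a term of order $K\|w_0^N\|_{L^2}^2$. Under the stated hypotheses this is merely bounded (by $\langle X_0^N,\ind\rangle$, since $w_0^N\le 1$), not vanishing. The escape clause ``one may reduce by approximation'' to uniformly bounded $X_0^N$-densities is problematic: the lemma must hold for the \emph{given} sequence $X_0^N$, not for a convenient replacement, and no continuity in the initial condition is available at this stage of the argument (indeed the lemma is a prerequisite for identifying the limit). The paper instead builds an explicit coupling (Proposition~\ref{coupling}): on the non-coalescence event the lineages are written as $Y^{N,i}_t+\varepsilon^{N,i}_t$ with $Y^{N,1},Y^{N,2}$ \emph{independent} random walks and the $\varepsilon^{N,i}$ negligibly small. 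It is this genuine independence, rather than an $L^2$ bound, that produces the extra factor of $1/K$.

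Second, for the coalescence probability itself your excursion-counting argument is perfectly valid in the fixed radius case, and the paper says as much, pointing to \cite{etheridge/freeman/penington/straulino:2015} for a rigorous implementation. The paper nonetheless presents a different proof, bounding the expected integral of the hazard rate $h(M|\xi^{N,1}_s-\xi^{N,2}_s|)$ via the coupling of Proposition~\ref{coupling} and a Fourier-transform computation in the style of \cite{darling/kac:1957} (Proposition~\ref{coalescence probabilities}). The reason is uniformity: the same argument is needed for Lemma~\ref{independence requirement variable radius}, where event radii vary and the excursion picture is less transparent, and the Fourier approach handles both cases at once.
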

If we can prove that our sequence $\{X^N\}_{N\geq 1}$
of processes is tight and that all limit points are martingales, then
granted Lemma~\ref{independence requirement fixed radius}
(and some uniform integrability),
Lemma~\ref{roughgenerator} and~\ref{quadratic variation} allows us to
identify the limit points as solutions to the
martingale problem for finite variance superBrownian motion with $m$ and
$\kappa$ as in the statement of Theorem~\ref{result fixed radius}.

\subsection{Variable radius case}\label{variableradiuslimiting}

We now turn to the variable radius case.
Since, if we are to have convergence to the superBrownian motion
with stable branching,
the quadratic variation of the previous subsection must be unbounded
as $N\to\infty$, 
we instead turn our attention to $\cL^N_f(\phi)$ with
$f(x)=e^{-x}$.
We suppose that $\phi$ is non-negative and, for simplicity, independent of 
time.
Substituting in~(\ref{infgen general case}),
we obtain
\begin{multline*}
\cL_{\exp(- \cdot)}(\phi)(X_0^N)
=
N M^d \exp(-\ldual X^N_0, \phi_0 \rdual)
\int_{\IR^d}
\int_0^\infty
\frac{1}{|\cB_r^M|}
\int_{\cB_r^M(x)} 
\Bigg[
\\
\frac{X^N_0(z)}{K}
\exp \Bigg( -K\frac{u(r)}{J} \int_{\cB_r^M(x)}  \phi(y)  \dd y 
 + \frac{u(r)}{J} \int_{\cB_r^M(x)} \phi(y) X^N_0(y) \dd y  
\Bigg)   \\
+ \big( 1 - \frac{X^N_0(z)}{K} \big)
\exp \Bigg( 
\frac{u(r)}{J} \int_{\cB_r^M(x)} \phi(y) X^N_0(y) \dd y  
\Bigg)
-
1
\Bigg]
\dd z \; \mu^N(\dd r) \dd x
.
\end{multline*}
Taylor expansion of the exponential function yields
\begin{multline*}
\frac{X^N_0(z)}{K}
\exp \Bigg( -K\frac{u(r)}{J} \int_{\cB_r^M(x)}  \phi(y)  \dd y 
 + \frac{u(r)}{J} \int_{\cB_r^M(x)} \phi(y) X^N_0(y) \dd y  
\Bigg)   \\
\phantom{AAAAAAAAAAAAAAAAAA}+ \big( 1 - \frac{X^N_0(z)}{K} \big)
\exp \Bigg( 
\frac{u(r)}{J} \int_{\cB_r^M(x)} \phi(y) X^N_0(y) \dd y  
\Bigg)
- 1
\\
=\frac{X^N_0(z)}{K}
\exp \Big( -K\frac{u(r)}{J} \int_{\cB_r^M(x)}  \phi(y)  \dd y \Big)
\Bigg[1
 + \frac{u(r)}{J} \int_{\cB_r^M(x)} \phi(y) X^N_0(y) \dd y  
 \\+ \cO\Big(\frac{u(r)}{J} \int_{\cB_r^M(x)} \phi(y) X^N_0(y) \dd y\Big)^2  
\Bigg]   \\
+ \big( 1 - \frac{X^N_0(z)}{K} \big)
\Bigg[1+ 
\frac{u(r)}{J} \int_{\cB_r^M(x)} \phi(y) X^N_0(y) \dd y  
+\cO\Big(\frac{u(r)}{J} \int_{\cB_r^M(x)} \phi(y) X^N_0(y) \dd y\Big)^2  
\Bigg]
-1\\
=\frac{X^N_0(z)}{K}
\exp \Big( -K\frac{u(r)}{J} \int_{\cB_r^M(x)}  \phi(y)  \dd y \Big)
-\frac{X^N_0(z)}{K}+\frac{u(r)}{J}\int_{\cB_r^M(x)}\phi(y)X_0^N(y)dy\\
+\frac{X^N_0(z)}{K}\frac{u(r)}{J}\int_{\cB_r^M(x)}\phi(y)X_0^N(y)\dd y
\; \cO\Bigg(\Big(\frac{Ku(r)}{J}\int_{\cB_r^M(x)}\phi(y)dy)\Big)^2\Bigg)\\
+ \cO\Big(\frac{u(r)}{J} \int_{\cB_r^M(x)} \phi(y) X^N_0(y) \dd y\Big)^2  
 \end{multline*}
from which, noting that, under the assumptions of 
Theorem~\ref{jonoinfinitelimit}, $K/(JM^d)\to 0$,
\begin{multline*}
\cL_{\exp(- \cdot)}(\phi)(X_0^N)
=
N M^d \exp(-\ldual X^N_0, \phi_0 \rdual)
\Bigg[
\int_{\IR^d}
\int_0^\infty
\frac{1}{|\cB_r^M|}
\int_{\cB_r^M(x)} \Bigg\{
\\
\frac{X^N_0(z)}{K}
\exp \left( -K\frac{u(r)}{J} \int_{\cB_r^M(x)}  \phi(y)  \dd y 
\right)
+
\frac{u(r)}{J} \int_{\cB_r^M(x)} \phi(y) X^N_0(y) \dd y  - \frac{X^N_0(z)}{K}
\Bigg\}\dd z \; \mu^N(\mathrm{r}) \dd x
\\
+
\int_{\IR^d}
\int_0^\infty
\cO \left(
\frac{u(r)^2}{J^2}
\left(\int_{\cB_r^M(x)}\phi(y) X_0^N(y)\dd y\right)^2
\right)
\mu^N(\mathrm{d}r) \dd x
\Bigg]
.
\end{multline*}
The analogue of Lemma~\ref{independence requirement fixed radius} that
we need in this context is
\begin{lemma}
\label{independence requirement variable radius} 
Under the assumptions of Theorem~\ref{jonoinfinitelimit}, for
$\phi\in C_0^3(\IR^d)$,
$$
\IE\left[\frac{N}{J^2M^d}\int_0^t
\int_{\IR^d}
\int_0^\infty
u(r)^2 r^{2d}
\left(\frac{1}{|\cB_r^M|}\int_{\cB_r^M(x)}\phi(y) X_s^N(y)\dd y\right)^2
\mu^N(\mathrm{d}r) \dd x \dd s\right]
\to 0$$
as $N\to\infty$.
\end{lemma}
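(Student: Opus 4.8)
The plan is to reduce the second moment that appears after expanding the square to the two-lineage coalescing dual of Proposition~\ref{prop: dual}, and then to show that the only dangerous contribution---the one coming from pairs of lineages which coalesce---is killed by the sparsity condition~(\ref{infvarrw}). First I would expand
$$\left(\frac{1}{|\cB_r^M|}\int_{\cB_r^M(x)}\phi(y)X_s^N(y)\dd y\right)^2 = \frac{1}{|\cB_r^M|^2}\int_{\cB_r^M(x)}\int_{\cB_r^M(x)}\phi(y_1)\phi(y_2)X_s^N(y_1)X_s^N(y_2)\dd y_1\dd y_2,$$
take expectations, and apply Proposition~\ref{prop: dual} with $k=2$ (using the Markov property to start the dual from time $s$), giving $\IE[X_s^N(y_1)X_s^N(y_2)]=K^2\,\IE_{\{y_1,y_2\}}[\prod_{j=1}^{N_s}w_0^N(\xi_s^j)]$. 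Integrating the indicator $\ind_{\cB_r^M(x)}(y_1)\ind_{\cB_r^M(x)}(y_2)$ over $x\in\IR^d$ produces a factor bounded by $|\cB_r^M|$ together with the constraint $|y_1-y_2|\le 2r/M$, and the weight $u(r)^2r^{2d}/|\cB_r^M|^2$ collapses to a constant multiple of $u(r)^2M^{2d}$; after these manipulations the quantity to be controlled is, up to constants,
$$\frac{N}{J^2}\int_0^t\int_0^\infty u(r)^2r^d\iint_{|y_1-y_2|\le 2r/M}\phi(y_1)\phi(y_2)\,\IE[X_s^N(y_1)X_s^N(y_2)]\,\dd y_1\dd y_2\,\mu^N(\dd r)\dd s.$$

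The next step is to split the dual expectation according to whether the two lineages have coalesced by time $s$. On the event $\{N_s=2\}$ the integrand carries \emph{two} factors $w_0^N\sim 1/K$, and, once the lineages have separated to distance $\cO(1)$, they behave as independent single lineages; bounding this contribution by (essentially) the product of first moments $\IE[X_s^N(y_1)]\IE[X_s^N(y_2)]$ and exploiting the close-pair volume factor $\cO(r^d/M^d)$ together with the smoothing of the single-lineage motion should make it negligible after integration against the effective scale $\frac{N}{J^2M^d}\int u(r)^2 r^{2d}\mu^N(\dd r)$ and invocation of Conditions~\ref{infvarcond1}--\ref{infvarcond3}. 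On the event $\{N_s=1\}$ only a single factor $w_0^N\sim 1/K$ survives, so that $K^2\,\IE[\cdots]$ is of order $K$ times the probability that the two lineages coalesce; this is precisely the ``close relatives'' term from the heuristics of Section~\ref{heuristics} and is the analogue of the term handled in Lemma~\ref{independence requirement fixed radius}.

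The crux is therefore an estimate, for the variable-radius dual of Definition~\ref{defn of dual}, on the probability that two lineages started within $\cO(1/M)$ of one another coalesce by time $s$. Following the excursion picture of Section~\ref{heuristics}, I would decompose the joint motion into excursions away from the coalescence range, count the expected number of close encounters (of order $M$, $\log M$ and $1$ in dimensions $1$, $2$ and $\ge 3$ respectively), and bound the per-encounter coalescence probability, which for the power-law $\mu^N$ of~(\ref{variable radius measure}) with impact $u(r)=r^{-\gamma}$ is no longer a clean $\cO(1/J)$ but is weighted by the heavy-tailed radii. Assembling these bounds and checking that $K$ times the coalescence probability, once multiplied by $\frac{N}{J^2M^d}\int u(r)^2r^{2d}\mu^N(\dd r)$, tends to zero is exactly where the constraints on $\alpha$ and $\gamma$ and the first line of~(\ref{infvarrw}) enter.

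I expect the main obstacle to be this coalescence estimate rather than the final assembly. In the fixed-radius case the jumps of a lineage are bounded and the per-encounter coalescence probability is transparently $\cO(1/J)$; here the coupling of jump size to impact through $u(r)=r^{-\gamma}$, the heavy tail of $\mu^N$ near its lower cutoff $J^{-1/\gamma}$, and the need to integrate the resulting bound against $u(r)^2r^{2d}\mu^N(\dd r)$ all conspire to make both the encounter count and the per-encounter probability delicate, and to make the verification of~(\ref{infvarrw})---in particular its genuinely non-trivial $d=1$ line---the technical heart of the argument. A secondary nuisance is the small-$s$ behaviour of the first-moment density in the non-coalesced term, which I would handle using that only the total mass $\int X_0^N\to\int X_0$ is needed once the close-pair volume factor is taken into account.
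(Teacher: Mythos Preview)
Your overall architecture matches the paper's: expand the square, apply the two-lineage duality of Proposition~\ref{prop: dual}, split according to whether the lineages have coalesced by time $s$, observe that the non-coalesced term carries two factors of $w_0^N\sim 1/K$ and is therefore harmless, and reduce everything to a coalescence-probability estimate multiplied by $K\cdot\frac{N}{J^2M^d}\int u(r)^2r^{2d}\mu^N(\dd r)$. You also correctly flag that the $d=1$ case is where the sparsity condition~(\ref{infvarrw}) is genuinely needed.

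Where you diverge is in the method for the coalescence estimate. You propose to make rigorous the excursion-count heuristic of Section~\ref{heuristics}: count close encounters and multiply by a per-encounter coalescence probability. The paper explicitly remarks that this route is straightforward only in the fixed radius case, and for variable radii takes a different path. It constructs a coupling (Proposition~\ref{coupling}) that writes each lineage as an \emph{independent} random walk $Y^{N,i}$ plus a small error $\varepsilon^{N,i}$, derives a pointwise upper bound $h(r_0)$ on the hazard rate of coalescence as a function of the current separation, and then bounds $\IE\big[\int_0^T h(M|\xi_s^{N,1}-\xi_s^{N,2}|)\dd s\big]$ by transferring to $Y^N$ and applying the Darling--Kac Fourier-transform machinery (the inverse transform of $(M^{-2}+|t|^2)^{-1}$, modified Bessel functions in $d=1,2$). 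This potential-theoretic computation is what produces the bounds $\cO(1/J)$, $\cO(\log M/J)$, and the more delicate $\cO\big(J^{(1-\beta)(\gamma-1)/\gamma}M^2/J\big)$ in $d=1$.

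The advantage of the paper's route is that the coupling cleanly separates the independent-walk behaviour from the residual dependence, and the hazard-rate integral reduces to a single convolution estimate rather than an excursion decomposition in which the heavy tail of $\mu^N$ near $J^{-1/\gamma}$ entangles the encounter count with the per-encounter probability. Your excursion approach is not wrong in principle, but the paper's choice suggests it found that picture hard to control here; if you pursue it, the place to watch is that large-radius events can simultaneously move a lineage a long way \emph{and} cause coalescence, so ``encounter'' and ``coalescence attempt'' are no longer as cleanly decoupled as in the fixed-radius setting.
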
 
Once again the proof, which relies on duality, is deferred
to Section~\ref{indeplinsection}.

Rearranging the expression for $\cL_{\exp(- \cdot)}(\phi)(X_0^N)$
and reversing the order of integration we find
\begin{multline*}
\cL_{\exp(- \cdot)}(\phi)(X_0^N)
=\exp\left(-\ldual X^N_0, \phi \rdual\right)
\Bigg[
\ldual X^N_0, A^N\phi + B^N\phi \rdual 
\\
+\int_{\IR^d}
\int_0^\infty
\cO \left(
\frac{N u(r)^2 r^{2d}}{J^2M^d}
\right)
\left(\frac{1}{|\cB_r^M|}\int_{\cB_r^M(x)}\phi(y) X_0^N(y)\dd y\right)^2
\mu^N(\mathrm{d}r) \dd x
\Bigg]
,
\end{multline*}
where
\begin{align*}
A^N\phi(x) := &
\frac{NM^d}{K}
\int_0^\infty
\frac{1}{|\cB_r^M|} 
\int_{\cB_r^M(x)} 
\exp
\left(
-K\frac{u(r)}{J} \int_{\cB_r^M(x)}  \phi(y)  \dd y 
\right)
\\
&
\phantom{AAAAAAAAAAAAAAAAAA}-
\exp
\left(
-|\cB_r^M|\frac{Ku(r)}{J} \phi(x)
\right)
\dd z \; \mu^N(\dd r)
,
\\
B^N\phi(x) := &
\frac{NM^d}{K}
\int_0^\infty\Bigg[
\exp
\left(
-|\cB_r^M| \frac{Ku(r)}{J} \phi(x)
\right)
+
|\cB_r^M| \frac{Ku(r)}{J} \phi(x) - 1\Bigg]
\; \mu^N(\dd r)
.
\end{align*}
Consider $A^N\phi(x)$. Again by Taylor expansion, we have
\begin{align}
\nonumber
A^N\phi(x) 
= &
\frac{NM^d}{J} 
\left[\int_0^\infty \frac{1}{|\cB_r^M|} \int_{\cB_r^M(x)} \int_{\cB_r^M(z)} -u(r)\left(\phi(y) - \phi(x) \right) \dd y \dd z \; \mu^N(\mathrm{d}r)
\right]
\\
\nonumber
& +
\int_0^\infty \frac{1}{|\cB_r^M|} \int_{\cB_r^M(x)}
\cO\left(\frac{K M^d N }{J^2}\right)
u(r)^2
\left(
\int_{\cB_r^M(z)} \phi(y) - \phi(x) \dd y
\right)^2
\dd y \dd z \; \mu^N(\mathrm{d}r)
\\
\label{estimate for A}
= &
-\frac{NM^d}{J} 
\left[\int_0^\infty \frac{1}{|\cB_r^M|} \int_{\cB_r^M(x)} \int_{\cB_r^M(z)} u(r)\left(\phi(y) - \phi(x) \right) \dd y \dd z \; \mu^N(\mathrm{d}r)
\right]
\\
\nonumber
& \phantom{AAAAAAAAAAAAAAAAAAAA}+
\int_0^\infty
\cO\left(\frac{K N \|\phi\|_{C^1}^2}{J^2M^{d+2}}\right)
u(r)^2 r^{2d+2}
\; \mu^N(\mathrm{d}r)
,
\end{align}
where the last line follows by observing that 
$\phi(y) - \phi(x) = \cO(r\|\phi\|_{C^1}/M)$
for $y\in \cB_r^M(x)$.

We note that the conditions 
$\gamma - d < \frac{1}{1 - \beta}$ and 
$\alpha + 1 = (\beta +1)(\gamma - d)$ will imply 
$\int_0^\infty u(r)^2 r^{2d + 2} \mu^N(\mathrm{d}r) <\infty$. The calculations
of Section~\ref{bounded variation part}
then allow us to write 
\begin{multline*}
A^N\phi(x) = 
\int_0^\infty\Bigg[
-\frac{C(d)N}{JM^2} u(r) r^{d+2} \Delta\phi
+
\cO\left(\frac{N\|\phi\|_{C^3}}{JM^3} \right) u(r)r^{d+3}
\\
+
\cO \left( \frac{KN\|\phi\|^2_{C^3}}{J^2M^{d+2}} \right) u(r)^2 r^{2d +2}
\Bigg] \mu^N(\mathrm{d}r)
,
\end{multline*}
where $C(d) := \int_{\ball{1}{0}} x^2 \dd x$.

We now specialise to 
$u(r)=r^{-\gamma}$ and 
$\mu^N(\mathrm{d}r) := r^\alpha \ind_{\left\{J^\frac{-1}{\gamma} < r <1 \right\} } \dd r$ as specified in Theorem~\ref{jonoinfinitelimit}.
However, it should be clear that other choices would result in nontrivial 
scaling limits.

Substituting into our expression for $B(x)$ and making the change of 
variable $\upsilon=|\cB_r^M|Ku(r)/J$  
\begin{align}\label{Bintroduction}
B^N\phi(x) := \frac{NM^d}{K(\gamma - d)}
\left( \frac{K|\cB_1|}{JM^d} \right)^{\frac{\alpha + 1}{\gamma - d}}
\int_{\frac{K|\cB_1|}{JM^d}}^{J^{\frac{\gamma - d}{\gamma}}\frac{K|\cB_1|}{JM^d}}
g(\upsilon \phi(x)) \upsilon^{- \left(\frac{\alpha + 1}{\gamma - d} + 1 \right)}
\dd \upsilon
,
\end{align}
where $g(\upsilon) = \exp(-\upsilon) + \upsilon - 1$.
 
Now
$$\int_0^\infty g(\upsilon \phi(x)) \upsilon^{-\beta - 2} \dd \upsilon 
= \kappa_0 \phi(x)^{\beta +1},$$
where $\kappa_0$ is a constant, and so to recover the superBrownian motion
with stable branching law with index $\beta$ in this limit, we choose
$\alpha + 1 = (\beta +1)(\gamma - d)$ (to get the right exponent)
and
$J^{\frac{\gamma - d}{\gamma}} K/(JM^d) \to \infty $, to ensure that
the upper limit of integration tends to infinity. That the lower limit of
integration
$K/(JM^d) \to 0$ was imposed already in order 
for $A^N\phi(x)$ to converge to a nontrivial limit.
To ensure that the term premultiplying the integral in~(\ref{Bintroduction})
is positive and 
finite, we take 
$\gamma-d >0$
and assume that
$\frac{N}{J}\left( \frac{K}{JM^d} \right)^{\beta} \to C_2$.

Granted Lemma~\ref{independence requirement variable radius}, 
we have now shown that under the conditions of 
Theorem~\ref{jonoinfinitelimit},
for any $\phi \in C_0^3(\IR^d)$, as $N\to\infty$,
\begin{align*}
\cL_{\exp(- \cdot)}(\phi)(X^N_t)
\to
\ldual X^N_t, -\frac{m}{2} \Delta \phi + \kappa \phi^{\beta +1} \rdual
\exp\left(-\ldual X^N_t, \phi \rdual \right)
\end{align*}
where $m,\kappa\in (0,\infty)$ are as in the statement
of the Theorem.

\section{Tightness}
\label{tightness}

In this section we turn to the proof of tightness of the sequence
$\{X^N\}_{N\geq 1}$ in the space of 
c\`adl\`ag ${\cal M}_F(\IR^d)$-valued processes.
In the fixed radius case, we shall also check that all limit 
points are actually continuous processes.

To prove tightness, we appeal to a specialised version of Jakubowski's 
general criterion that we have taken from \cite{cox/durrett/perkins:2000}.
For a Borel set $A$, let $X_t^N(A):=\ldual X_t^N,\ind_A\rdual$.
\begin{propn}[\cite{cox/durrett/perkins:2000}, Proposition~3.1] \label{3.1propn}
Let $\Phi \subset C_b(\IR^d)$ be a separating class which is closed 
under addition. A sequence of c\`adl\`ag ${\cal M}_F(\IR^d)$-valued processes 
$\{X^N\}_{N\geq 1}$ 
is tight if and only if the following conditions hold:
\begin{enumerate}
\item
\label{cond 1 propn 3.1}
For each  $T$, $\varepsilon > 0$ there is a compact set 
$K_{T,\varepsilon} \subset \IR^d$ such that
\begin{equation} \label{conpactcontproperty}
\sup_N \IP\left[
\sup_{t\leq T} X^N_t \left(K^c_{T,\varepsilon} \right) > \varepsilon
\right] < \varepsilon.
\end{equation}
\item
\label{cond 2 propn 3.1}
For each $T>0$, $\lim_{H \to \infty} \sup_N \IP [\sup_{t \leq T} \ldual X_t^N, \ind \rdual \geq H] = 0$.
\item
\label{cond 3 propn 3.1}
For each $\phi \in \Phi$, $\{\langle X^N_\cdot, \phi \rangle\}_{N\geq 1}$ 
is tight.
\end{enumerate}
\end{propn}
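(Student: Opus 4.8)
The plan is to deduce the stated equivalence from Jakubowski's general tightness criterion for c\`adl\`ag processes taking values in a completely regular topological space whose compact subsets are metrizable. Since $\IR^d$ is Polish, the space ${\cal M}_F(\IR^d)$ equipped with the topology of weak convergence is itself Polish, and in particular satisfies these hypotheses, so Jakubowski's theorem applies: the sequence $\{X^N\}_{N\geq 1}$ is tight in the Skorokhod space of c\`adl\`ag ${\cal M}_F(\IR^d)$-valued paths if and only if (J1) a compact containment condition holds (for each $T,\varepsilon>0$ there is a compact $\mathcal K_{T,\varepsilon}\subset{\cal M}_F(\IR^d)$ with $\liminf_N\IP[X^N_t\in\mathcal K_{T,\varepsilon}\text{ for all }t\le T]\ge 1-\varepsilon$), and (J2) for some family $\mathbb F$ of continuous real functions that separates the points of ${\cal M}_F(\IR^d)$ and is closed under addition, $\{F(X^N_\cdot)\}_{N\geq 1}$ is tight in $D([0,\infty),\IR)$ for every $F\in\mathbb F$. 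I would take $\mathbb F=\{\mu\mapsto\ldual\mu,\phi\rdual:\phi\in\Phi\}$: because $\Phi$ is separating and closed under addition, and each linear functional $\mu\mapsto\ldual\mu,\phi\rdual$ is weakly continuous, the family $\mathbb F$ has exactly the required properties, and (J2) is then literally condition~\ref{cond 3 propn 3.1} (tightness of each real-valued coordinate is preserved under the continuous map $\mu_\cdot\mapsto\ldual\mu_\cdot,\phi\rdual$ from $D([0,\infty),{\cal M}_F(\IR^d))$ to $D([0,\infty),\IR)$). The entire content of the proposition therefore reduces to showing that (J1) is equivalent to the conjunction of conditions~\ref{cond 1 propn 3.1} and~\ref{cond 2 propn 3.1}.

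For this I would invoke the Prokhorov description of weak compactness in ${\cal M}_F(\IR^d)$: a subset $\mathcal K$ is relatively compact if and only if $\sup_{\mu\in\mathcal K}\mu(\IR^d)<\infty$ and $\mathcal K$ is uniformly tight, i.e.\ for every $\eta>0$ there is a compact $C\subset\IR^d$ with $\sup_{\mu\in\mathcal K}\mu(\compliment{C})<\eta$. For the implication ``conditions \ref{cond 1 propn 3.1} and \ref{cond 2 propn 3.1} $\Rightarrow$ (J1)'' I would run the standard diagonal construction: given $T,\varepsilon$, use condition~\ref{cond 2 propn 3.1} to pick $H$ with $\sup_N\IP[\sup_{t\le T}\ldual X^N_t,\ind\rdual\ge H]<\varepsilon/2$, and for each $j\ge 1$ apply condition~\ref{cond 1 propn 3.1} with a tolerance $\varepsilon_j\le\min(1/j,\,\varepsilon 2^{-j-1})$ to obtain a compact $C_j\subset\IR^d$ with $\sup_N\IP[\sup_{t\le T}X^N_t(\compliment{C_j})>1/j]<\varepsilon 2^{-j-1}$. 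Setting
$$\mathcal K_{T,\varepsilon}:=\{\mu\in{\cal M}_F(\IR^d):\mu(\IR^d)\le H,\ \mu(\compliment{C_j})\le 1/j\text{ for all }j\ge 1\},$$
one checks that $\mathcal K_{T,\varepsilon}$ is weakly closed (total mass is weakly continuous and each $\mu\mapsto\mu(\compliment{C_j})$ is lower semicontinuous) and relatively compact by the Prokhorov characterization, hence compact; a union bound over the complementary bad events then shows, uniformly in $N$, that the whole path segment $\{X^N_t\}_{t\le T}$ lies in $\mathcal K_{T,\varepsilon}$ with probability at least $1-\varepsilon$, which is (J1).

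The converse is the necessity direction. Jakubowski's theorem gives that tightness of $\{X^N\}$ implies both (J1) and (J2); condition~\ref{cond 3 propn 3.1} follows at once from (J2) as noted, and conditions~\ref{cond 1 propn 3.1} and~\ref{cond 2 propn 3.1} follow by unpacking the compact set $\mathcal K_{T,\varepsilon}$ from (J1) through Prokhorov: its finite mass bound yields condition~\ref{cond 2 propn 3.1} and its uniform tightness yields condition~\ref{cond 1 propn 3.1}. I expect the main obstacle to be the careful matching of Jakubowski's abstract compact-containment condition with this concrete pair of conditions, and in particular the need to use the version of his criterion in which containment holds for the entire path segment $t\le T$ simultaneously (this is exactly what produces the $\sup_{t\le T}$ in conditions~\ref{cond 1 propn 3.1} and~\ref{cond 2 propn 3.1}). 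The remaining points are routine: the discrepancy between Jakubowski's $\liminf_N$ and the $\sup_N$ in the statement is absorbed by noting that each individual c\`adl\`ag process is automatically tight, so finitely many $N$ can be accommodated by enlarging the compact set and the constant $H$; and the measurability of the suprema $\sup_{t\le T}X^N_t(A)$ is handled by right-continuity, evaluating along a countable dense set of times.
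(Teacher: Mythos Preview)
The paper does not give its own proof of this proposition: it simply quotes the result from \cite{cox/durrett/perkins:2000}, explicitly describing it as ``a specialised version of Jakubowski's general criterion''. Your derivation via Jakubowski's theorem together with the Prokhorov characterisation of relative compactness in ${\cal M}_F(\IR^d)$ is exactly the intended reduction, and the argument you outline (including the diagonal construction of the compact set $\mathcal K_{T,\varepsilon}$ and the handling of the $\liminf_N$ versus $\sup_N$ discrepancy) is correct.
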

In fact it is convenient to slightly modify the statements of 
Conditions~\ref{cond 1 propn 3.1} and~\ref{cond 2 propn 3.1}.
\begin{corollary}
\label{limsup corollary}
The conclusion of Proposition~\ref{3.1propn} remains valid if 
Condition~\ref{cond 1 propn 3.1} and~\ref{cond 2 propn 3.1} are replaced by:
\begin{description}
\item[$1'.$]\label{modified cond 1}
For each  $T$, $\varepsilon > 0$ there is a compact set 
$K_{T,\varepsilon} \subset \IR^d$ such that
\begin{equation} 
\limsup_{N\to\infty} 
\IP\left[
\sup_{t\leq T} X^N_t \left(K^c_{T,\varepsilon} \right) > \varepsilon
\right] < \varepsilon,
\end{equation}
\item[$2'$]\label{modified cond 2}
For each $T>0$, 
\begin{equation}
\lim_{H \to \infty} \limsup_{N\to\infty} 
\IP [\sup_{t \leq T} \ldual X_t^N, \ind \rdual \geq H] = 0.
\end{equation}
\end{description}
\end{corollary}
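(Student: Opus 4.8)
The plan is to verify Conditions~\ref{cond 1 propn 3.1}--\ref{cond 3 propn 3.1} of Proposition~\ref{3.1propn} \emph{from} the weaker hypotheses $1'$, $2'$ and Condition~\ref{cond 3 propn 3.1}, and then to invoke the proposition unchanged. Condition~\ref{cond 3 propn 3.1} is shared by both formulations, so nothing is required there. The only difference between $\sup_N$ and $\limsup_{N\to\infty}$ is the behaviour of finitely many indices $N$, so the entire substance of the corollary is that these finitely many `early' processes cannot spoil tightness: each of them is, on its own, a tight random element of path space, being the law of a single c\`adl\`ag process on a Polish space.

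First I would record an auxiliary fact about a single c\`adl\`ag ${\cal M}_F(\IR^d)$-valued process $Y$. Fix $T>0$. I claim that for every $\varepsilon>0$ there is a compact $K\subset\IR^d$ with $\IP[\sup_{t\leq T}Y_t(K^c)>\varepsilon]<\varepsilon$, and for every $\delta>0$ a finite $H$ with $\IP[\sup_{t\leq T}\ldual Y_t,\ind\rdual\geq H]<\delta$. Since ${\cal M}_F(\IR^d)$ under the weak topology is a complete separable metric space, the range $\{Y_t(\omega):t\leq T\}$ of a c\`adl\`ag path is relatively compact for almost every $\omega$; write $R(\omega)$ for its closure. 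By Prokhorov's theorem every compact subset of ${\cal M}_F(\IR^d)$ is uniformly tight and bounded in total mass, so with $B_n:=\{x\in\IR^d:|x|\leq n\}$ we have $\sup_{t\leq T}Y_t(B_n^c)\leq\sup_{\mu\in R(\omega)}\mu(B_n^c)\to 0$ as $n\to\infty$ and $\sup_{t\leq T}\ldual Y_t,\ind\rdual<\infty$, almost surely. Letting $n\to\infty$ and $H\to\infty$ and using continuity of $\IP$ along the corresponding decreasing families of events shows that $\IP[\sup_{t\leq T}Y_t(B_n^c)>\varepsilon]\to 0$ and $\IP[\sup_{t\leq T}\ldual Y_t,\ind\rdual\geq H]\to 0$, which is the claim.

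With this in hand I would deduce Condition~\ref{cond 1 propn 3.1}. Fix $T,\varepsilon$. Condition~$1'$ supplies a compact $K_0$ and an index $N_0$ with $\IP[\sup_{t\leq T}X^N_t(K_0^c)>\varepsilon]<\varepsilon$ for all $N\geq N_0$, while for each of the finitely many $N<N_0$ the auxiliary fact supplies a compact $K_N$ obeying the same bound. Then $K^*:=K_0\cup\bigcup_{N<N_0}K_N$ is compact and $K^{*c}$ is contained in each of $K_0^c$ and the $K_N^c$, whence $X^N_t(K^{*c})\leq X^N_t(K_0^c)$ for $N\geq N_0$ and $X^N_t(K^{*c})\leq X^N_t(K_N^c)$ for $N<N_0$, so $\sup_N\IP[\sup_{t\leq T}X^N_t(K^{*c})>\varepsilon]\leq\varepsilon$; running the argument with $\varepsilon/2$ throughout recovers the strict inequality. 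Condition~\ref{cond 2 propn 3.1} is entirely analogous: given $\eta>0$, Condition~$2'$ gives $H_0$ and $N_0$ with $\IP[\sup_{t\leq T}\ldual X^N_t,\ind\rdual\geq H_0]<\eta$ for $N\geq N_0$, the auxiliary fact gives thresholds $H_N$ for the finitely many $N<N_0$, and $H^*:=\max(H_0,\max_{N<N_0}H_N)$ yields $\sup_N\IP[\sup_{t\leq T}\ldual X^N_t,\ind\rdual\geq H^*]\leq\eta$; since $H\mapsto\sup_N\IP[\sup_{t\leq T}\ldual X^N_t,\ind\rdual\geq H]$ is nonincreasing, this gives $\lim_{H\to\infty}\sup_N\IP[\sup_{t\leq T}\ldual X^N_t,\ind\rdual\geq H]=0$. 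Proposition~\ref{3.1propn} now applies verbatim and $\{X^N\}_{N\geq 1}$ is tight.

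The one step needing genuine care is the auxiliary fact, and specifically its \emph{de-randomisation}: upgrading the almost sure relative compactness of the random range $R(\omega)$ to a single deterministic compact set, and a single mass bound, that work with probability at least $1-\varepsilon$. Everything else is the elementary principle that tightness is insensitive to any finite initial segment of the sequence, once each individual law is known to be tight by Ulam's theorem; the finite unions of compacts, the maxima of thresholds, and the harmless passage from $\varepsilon$ to $\varepsilon/2$ are purely routine bookkeeping.
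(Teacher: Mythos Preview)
Your argument is correct, and in fact proves the corollary in the generality in which it is stated. The paper's proof is shorter but relies on a feature specific to the processes under consideration: for each fixed $N$, the support of $X^N_t$ is compact for all $t\leq T$ (by the argument at the beginning of Section~\ref{martingale problem prelimit}, comparing the growth of the support to a pure birth process). This immediately furnishes, for each of the finitely many early indices $N\leq N_0$, a compact set $K^N_{T,\varepsilon}$ with the required bound, and the union $\bigcup_{N=0}^{N_0}K^N_{T,\varepsilon}$ works for all $N$. Your auxiliary fact replaces this ad hoc input with a general statement about c\`adl\`ag ${\cal M}_F(\IR^d)$-valued processes, obtained from Prokhorov's theorem and the relative compactness of the range of a c\`adl\`ag path on a compact interval. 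The cost is a slightly longer argument; the gain is that your proof does not depend on any structural property of the particular $X^N$ and so establishes the corollary as a stand-alone result.
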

\begin{proof}
We mimic the proof of \cite{ethier/kurtz:1986}, Chapter~3, Corollary~7.4.

Suppose that~(\ref{modified cond 1}) is satisfied, then given 
$\varepsilon>0$, there exists a compact set $K_{T,\varepsilon}^0$ and an
integer $N_0$ such that for all $N> N_0$ 
\begin{equation}
\nonumber
\IP\left[
\sup_{t\leq T} X^N_t \left((K^0_{T,\varepsilon})^c \right) > \varepsilon
\right] < \varepsilon.
\end{equation}
For each $N\leq N_0$ (c.f.~the argument at the beginning of
Section~\ref{martingale problem prelimit}
that $X_t^N$ has compact support for all $t\leq T$), 
there is a set $K_{T,\varepsilon}^N$ such that
\begin{equation}
\nonumber
\IP\left[
\sup_{t\leq T} X^N_t \left((K^N_{T,\varepsilon})^c \right) > \varepsilon
\right] < \varepsilon.
\end{equation}
Set $K_{T,\varepsilon}=\bigcup_{N=0}^{N_0}K_{T,\varepsilon}^N$ and 
Condition~\ref{cond 1 propn 3.1} of Proposition~\ref{3.1propn} is
satisfied.

The proof that $2'$ implies 2 is similar.
\end{proof}

We borrow a convenient formulation of the additional criterion for the
limit points to be continuous from the same paper.
\begin{corollary}[Cox et al.~(2000), Corollary~3.2] 
\label{corollaryforcontinuous}
If a sequence of measure valued processes satisfy the conditions of 
Proposition~\ref{3.1propn} with $\Phi = C_0^\infty (\IR^d)$ and 
for each $\phi \in \Phi$, every limit point of 
$\{\langle X^N_\cdot, \phi \rangle\}_{N\geq 1}$ 
is supported on the space of continuous functions, then 
$\{X^N\}_{N\geq 1}$ is tight and all limit points are continuous.
\end{corollary}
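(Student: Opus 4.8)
The plan is to obtain tightness immediately from Proposition~\ref{3.1propn} and then to rule out jumps in every limit point by testing against a countable separating family of test functions. Since $C_0^\infty(\IR^d)$ is a separating class that is closed under addition, the hypotheses are precisely those of Proposition~\ref{3.1propn}, so that proposition yields tightness of $\{X^N\}_{N\geq 1}$ in $D([0,\infty),{\mathcal M}_F(\IR^d))$ at once. The substance of the corollary is therefore the continuity of the limit points.

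First I would fix an arbitrary limit point $X$, so that $X^{N_k}\Rightarrow X$ along some subsequence, and transfer the hypothesis on scalar projections to $X$ itself. For fixed $\phi\in C_0^\infty(\IR^d)$ the evaluation $\mu\mapsto \ldual\mu,\phi\rdual$ is continuous on ${\mathcal M}_F(\IR^d)$; because a c\`adl\`ag path into the Polish space ${\mathcal M}_F(\IR^d)$ has relatively compact range on each compact time interval, this lifts to a continuous map $x\mapsto \ldual x_\cdot,\phi\rdual$ from $D([0,\infty),{\mathcal M}_F(\IR^d))$ into $D([0,\infty),\IR)$ (cf.~\cite{ethier/kurtz:1986}). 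An application of the continuous mapping theorem then gives $\ldual X^{N_k}_\cdot,\phi\rdual\Rightarrow\ldual X_\cdot,\phi\rdual$, so that $\ldual X_\cdot,\phi\rdual$ is itself a limit point of $\{\ldual X^N_\cdot,\phi\rdual\}_{N\geq 1}$. By assumption its law is carried by the continuous paths, whence $t\mapsto\ldual X_t,\phi\rdual$ is almost surely continuous.

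Next I would upgrade this one-function-at-a-time statement to weak continuity of the measure-valued path. Choosing a countable family $\{\phi_j\}_{j\in\IN}\subset C_0^\infty(\IR^d)$ that separates the measures in ${\mathcal M}_F(\IR^d)$ and intersecting the corresponding countable collection of almost-sure events, I obtain a single full-probability event on which $t\mapsto\ldual X_t,\phi_j\rdual$ is continuous for every $j$ simultaneously. On this event, at any time $t_0$ the left limit $X_{t_0-}$ exists in ${\mathcal M}_F(\IR^d)$ since $X$ is c\`adl\`ag, and continuity of each scalar projection forces $\ldual X_{t_0},\phi_j\rdual=\ldual X_{t_0-},\phi_j\rdual$ for all $j$; as $\{\phi_j\}$ is separating, $X_{t_0}=X_{t_0-}$, so $X$ has no jumps and is continuous.

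I expect the main obstacle to be the passage from continuity of each scalar coordinate to continuity of the ${\mathcal M}_F(\IR^d)$-valued path, which is where the separating property of the countable family does the essential work; the one technical point feeding into it is the continuity of the lifted projection on Skorokhod space, which relies on the relative compactness of the range of a c\`adl\`ag path so that the continuous evaluation map is uniformly continuous there. The remaining ingredients---tightness from Proposition~\ref{3.1propn} and the continuous mapping theorem---are immediate, and the existence of a countable separating subfamily of $C_0^\infty(\IR^d)$ is standard for the Polish space ${\mathcal M}_F(\IR^d)$.
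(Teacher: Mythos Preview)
The paper does not prove this corollary; it is quoted verbatim from \cite{cox/durrett/perkins:2000} and used as a black box. Your argument is correct and is essentially the standard one: tightness comes directly from Proposition~\ref{3.1propn}, the continuous mapping theorem pushes the continuity hypothesis on $\{\ldual X^N_\cdot,\phi\rdual\}$ through to the limit $\ldual X_\cdot,\phi\rdual$, and a countable separating family in $C_0^\infty(\IR^d)$ then forces $X_{t}=X_{t-}$ for all $t$ almost surely. The one point you rightly flag---that the induced map $D([0,\infty),{\cal M}_F(\IR^d))\to D([0,\infty),\IR)$ is continuous---is handled correctly via the relative compactness of the range of a c\`adl\`ag path on compact time intervals.
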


These two results reduce much of the work in proving tightness
to an examination of the 
one-dimensional projections 
$\{\ldual X_\cdot^N,\phi\rdual\}_{N\geq 1}$.
For this we shall make use of the calculations of 
Section~\ref{limits}
which showed, in particular, that $\cL^N(\phi)(X_t^N)$ is close to
$\ldual X_t^N, \cA^N\phi\rdual$ where $\cA^N$, defined in
Definition~\ref{spatialgenerator}, is $m(N)\Delta/2$. 
This will allow us to exploit the following elementary properties of
the heat equation which, for convenience, we record as a lemma.
\begin{lemma} \label{uniform lipschitz}
Suppose that $\psi:\IR^d\to\IR\in C_0^3$ 
and that $v$ is a classical solution to
\begin{equation}
\label{heat equation}
\frac{\partial v}{\partial t} = \frac{m}{2}\Delta v, \qquad
v(0,x) = \psi(x),
\end{equation}
where the diffusion coefficient is a constant $m\in (0,\infty)$.
\begin{enumerate}
\item If $\psi$
is uniformly Lipschitz continuous
with Lipschitz-constant $M$, 
then for all $t$, $v(t,\cdot)$ is Lipschitz continuous with 
Lipschitz constant $M$.
\item For any $k\in\IN$, for each $t$, the $C^k$ norm of $v(t,x)$ (as 
a function of $x$) is
bounded above by that of $\psi$.
\item If $v$ solves the equation with initial data $\psi$
then $\Delta v$ solves the same equation with initial data $\Delta \psi$.
\end{enumerate}
\end{lemma}
\begin{notn}
We denote by $P^{(m)}_t$ the heat semi-group with diffusion coefficient $m$, 
so that the solution to~(\ref{heat equation}) can be written
\begin{align*}
v(t,x)= P^{(m)}_t(\psi)(x) = \IE_x[\psi(B_{mt})],
\end{align*}
where $B_t$ is a standard Brownian Motion.
\end{notn}

\subsection{Verification of Condition~$2'$ of Corollary~\ref{limsup corollary}}

\begin{lemma}
\label{condn 2 of propn}
Under the conditions of either Theorem~\ref{result fixed radius}
or Theorem~\ref{jonoinfinitelimit},
for each $T>0$, 
\begin{enumerate}
\item
\begin{equation}
\label{bound on total mass}
\IE[\ldual X_t^N,\ind\rdual]\leq \ldual X_0^N,\ind\rdual 
\exp\Big(\cO\big(\frac{N}{JM^3}\big)t\Big), \qquad\forall t\leq T;
\end{equation}
\item
$$\lim_{H \to \infty} \sup_N \IP [\sup_{t \leq T} \ldual X_t^N, \ind \rdual 
\geq H] = 0.$$
\end{enumerate}
\end{lemma}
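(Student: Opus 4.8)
The statement has two parts. Part~(1) is a differential inequality for the expected total mass, and Part~(2) is a sup-in-time control that upgrades the pointwise-in-$t$ bound to a uniform one. The plan is to obtain (1) directly from the generator computation of Section~\ref{martingale problem prelimit} applied to $\phi\equiv\ind$, and then to deduce (2) from (1) by a maximal inequality using the martingale structure recorded in Lemma~\ref{semimartingale decomp without time}.

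\textbf{Proof of Part~(1).}
First I would take $\phi=\ind$ in the semimartingale decomposition of Lemma~\ref{semimartingale decomp without time}, so that $\ldual X_t^N,\ind\rdual$ is the total mass. The crucial observation is that for $\phi=\ind$ the leading (Laplacian) term in $\cL^N(\phi)$ vanishes: from Lemma~\ref{roughgenerator}, $\cA^N(\ind)=\frac{m(N)}{2}\Delta\ind=0$, leaving only the error term $\zeta_t^N$. Indeed, going back to the expression for $\cL^N(\phi)(X_s^N)$ in Lemma~\ref{semimartingale decomp without time}, the inner bracket is
$$\Big(\int_{\cB_r^M(x)}\ind\,\dd y\Big)X_s^N(z)-\int_{\cB_r^M(x)}X_s^N(y)\dd y=|\cB_r^M|\,X_s^N(z)-\int_{\cB_r^M(x)}X_s^N(y)\dd y,$$
which integrates against $\dd z$ over $\cB_r^M(x)$ to give a difference of two equal-looking terms; the leading order cancels exactly and only the $\cO(1/M)$ discrepancy between $X_s^N$ evaluated at nearby points $z$ and $y$ survives. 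Thus Lemma~\ref{roughgenerator} with $\phi_s\equiv\ind$ gives $\int_0^t\cL^N(\ind)(X_s^N)\dd s=\zeta_t^N(\ind)$ with $|\zeta_t^N(\ind)|\leq \cO\!\big(\frac{N}{JM^3}\int_0^\infty u(r)r^{d+3}\mu^N(\dd r)\big)\int_0^t\ldual X_s^N,\ind\rdual\dd s$. Taking expectations in the semimartingale decomposition (the local martingale $M_t^N(\ind)$ has mean zero after a standard localisation, justified because $X_t^N$ has compact support hence bounded mass for $t\leq T$, as noted at the start of Section~\ref{martingale problem prelimit}), I obtain
$$\IE[\ldual X_t^N,\ind\rdual]\leq \ldual X_0^N,\ind\rdual+\cO\Big(\tfrac{N}{JM^3}\Big)\int_0^t\IE[\ldual X_s^N,\ind\rdual]\dd s,$$
where in the fixed radius case the $r$-integral is just a constant and in the variable radius case one checks it is finite under the conditions on $\alpha,\gamma$ (so the constant can be absorbed). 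Grönwall's inequality then yields~(\ref{bound on total mass}).

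\textbf{Proof of Part~(2).}
For the uniform-in-time bound I would use Markov's inequality combined with a maximal inequality rather than the pointwise bound alone. Write the semimartingale decomposition for total mass as $\ldual X_t^N,\ind\rdual=\ldual X_0^N,\ind\rdual+\int_0^t\cL^N(\ind)(X_s^N)\dd s+M_t^N(\ind)$. The finite-variation part is bounded by $\cO(\frac{N}{JM^3})\int_0^t\ldual X_s^N,\ind\rdual\dd s$, which is increasing in $t$, so $\sup_{t\leq T}$ of this part is controlled by its value at $T$; taking expectations and using Part~(1) bounds its expectation by a constant depending only on $T$ and $\ldual X_0^N,\ind\rdual$. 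For the martingale part I would apply the $L^2$ (Doob) maximal inequality, $\IE[\sup_{t\leq T}M_t^N(\ind)^2]\leq 4\,\IE[\ldual M^N(\ind)\rdual_T]$, and estimate the expected quadratic variation using~(\ref{quadvariationequation}) with $\phi=\ind$, together with Part~(1) to bound $\IE[\ldual X_s^N,\ind\rdual]$; since $\ldual X_0^N,\ind\rdual$ converges, these expectations are uniformly bounded in $N$. Markov's inequality then gives $\sup_N\IP[\sup_{t\leq T}\ldual X_t^N,\ind\rdual\geq H]\leq C(T)/H\to0$ as $H\to\infty$, which is the claim (with $\sup_N$; the $\limsup_N$ form needed for Corollary~\ref{limsup corollary} follows a fortiori, and the finitely many remaining $N$ are handled as in Corollary~\ref{limsup corollary}).

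\textbf{Main obstacle.}
The one genuinely delicate point is the bound on $\IE[\ldual M^N(\ind)\rdual_T]$ in the variable radius case: the quadratic variation~(\ref{quadvariationequation}) carries the potentially large prefactor $N M^{2d}u(r)^2/(J^2|\cB_r|)$, and one must verify, using the conditions on $\alpha$ and $\gamma$ together with $K/(JM^d)\to0$, that after integrating $\mu^N(\dd r)$ the resulting expression stays bounded (the truncation at $r>J^{-1/\gamma}$ is exactly what keeps this finite). This is where the hypotheses $\gamma-d<\frac{1}{1-\beta}$ and $\alpha+1=(\beta+1)(\gamma-d)$ do their work, as already anticipated in Section~\ref{variableradiuslimiting}. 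Everything else is routine Grönwall and Doob.
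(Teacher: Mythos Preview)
Your Part~(1) is morally correct but has a small technical gap: the lemmas you invoke (Lemma~\ref{semimartingale decomp without time} and Lemma~\ref{roughgenerator}) are stated for $\phi\in C_0^3(\IR^d)$, and $\ind\notin C_0^3$. The paper handles this by approximating $\ind$ from below by a sequence $h^R\in C_c^\infty$ with $\|\Delta h^R\|_\infty\leq\varepsilon$ and uniformly bounded $C^3$ norms, applying Lemma~\ref{roughgenerator} to $h^R$, and passing to the limit via Monotone Convergence before invoking Gr\"onwall. Your direct argument can be repaired in the same way; the outcome is identical.

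Your Part~(2), however, has a genuine gap in the variable radius case. You propose to control $\sup_{t\leq T}|M_t^N(\ind)|$ via Doob's $L^2$ maximal inequality, which requires a bound on $\IE[\lquad M^N(\ind)\rquad_T]$ that is \emph{uniform in $N$}. But in the variable radius case this quadratic variation diverges as $N\to\infty$: integrating the prefactor in~(\ref{quadvariationequation}) against $\mu^N(\mathrm{d}r)$ gives, up to constants,
\[
\frac{KN}{J^2M^d}\int_{J^{-1/\gamma}}^1 r^{2d-2\gamma+\alpha}\,\dd r
\;\sim\; \Big(\frac{1}{M^2}\Big)^{(1-\beta)/\beta} J^{(1-\beta)(\gamma-d)/\gamma},
\]
and Condition~\ref{infvarcond3} of Theorem~\ref{jonoinfinitelimit} forces this to tend to infinity. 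The paper says this explicitly at the start of the variable radius tightness subsection: ``the quadratic variation of the martingale part of $\ldual X_\cdot^N,\phi\rdual$ will grow without bound as $N\to\infty$''. So your $L^2$ route cannot yield the required $\sup_N$ (or even $\limsup_N$) control.

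The paper sidesteps the quadratic variation entirely. It introduces the stopping time $\tau^N=\inf\{t:\ldual X_t^N,\ind\rdual\geq H\}$, works with the stopped process $\hat X_t^N=X^N_{t\wedge\tau^N}$, and derives a Gr\"onwall bound for $\IE[\ldual\hat X_t^N,\ind\rdual]$ in which the only $H$-dependence enters multiplied by $N/(JM^3)\to 0$. Since $\IP[\sup_{t\leq T}\ldual X_t^N,\ind\rdual\geq H]=\IP[\ldual\hat X_T^N,\ind\rdual\geq H]$, Markov's inequality then gives the result directly, with no appeal to second moments. This stopping-time argument works uniformly across both the fixed and variable radius cases.
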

\begin{proof}

\noindent {\bf 1.}
We set $\chi^N(t)=\IE[\ldual X_t^N,\ind\rdual]$. 

Let $\varepsilon>0$ and let $\{h^R\}_{R\geq 1}$ be a 
sequence of smooth, compactly supported
functions on $\IR^d$ such that
$$h^R|_{\ball{R}{0}} = 1,\quad  h^R|_{\ball{2R}{0}^c} = 0, \quad
\Delta h^R \leq \varepsilon,\quad \abs h^R \abs \leq 1,\quad h^R\leq h^{R+1}.$$ 
We can further arrange that $h^R$ are uniformly bounded in $C^3$.

In the notation of Lemma~\ref{roughgenerator},
\begin{align*}
\langle  {X}_t^N, h^R \rangle = 
\langle  {X}_0^N , h^R \rangle + \int_0^{t}  
\langle  {X}_s^N, \cA^N(h^R) \rangle \dd s + 
\zeta_{t}^N (h^R)+   {M}^N_t(h^R).
\end{align*}
Taking expectations,
\begin{align*}
\IE[\langle  {X}_t^N, h^R \rangle] & = 
\langle  {X}_0^N , h^R \rangle + \IE\Big[\int_0^{t}  
\langle  {X}_s^N, \cA^N(h^R) \rangle \dd s\Big] + 
\IE[\zeta_{t}^N(h^R)] \\
& \leq \chi^N(0)+C\|\Delta h^R\|_\infty\int_0^t\chi^N(s) \dd s 
+\cO\Big(\frac{N\|h^R\|_{C^3}}{JM^3}\Big)\int_0^t\chi^N(s)\dd s.
\end{align*}
We have used the fact that, under our assumptions, $m(N)$ 
converges to bound the constant in front of 
the Laplacian, and the bound on the error $\IE[\zeta^N_t(h^R)]$ from
Lemma~\ref{roughgenerator}.
Letting $R\uparrow\infty$ and applying the Monotone Convergence Theorem,
$$\chi^N(t)\leq \chi^N(0)+\Big(C\varepsilon +\cO\big(\frac{N\|h^R\|_{C^3}}{JM^3}\big)\Big)
\int_0^t\chi^N(s)\dd s,$$
and applying Gronwall's inequality and using that $\varepsilon$ was
arbitrary,
$$\chi^N(t)\leq \chi^N(0)\exp\Big(\cO\big(\frac{N}{JM^3}\big)t\Big),$$
as required.

\noindent{\bf 2.}
Define a sequence of stopping times by
$$\tau^N := \inf \{t \geq 0 : \ldual X^N_t, \ind \rdual \geq H \},$$ 
and set $\stopped{X}^N_t := X^N_{t \wedge \tau^N}$. Then
\begin{align*}
\langle  \stopped{X}_t^N, \phi \rangle = 
\langle   \stopped{X}_0^N , \phi \rangle + \int_0^{t \wedge \tau^N}  
\langle   \stopped{X}_s^N, \cA^N(\phi) \rangle \dd s + 
\zeta_{t\wedge \tau^N}^N(\phi) +   \stopped{M}^N_t(\phi),
\end{align*}
where $ \stopped{M}^N_t(\phi)=M^N_{t\wedge \tau^N}$.
Write $\hat{\chi}^N(t) = \IE[\ldual \stopped X^N_t , \ind \rdual]$, and proceed
as above, but with the stopped martingale problem, to obtain
\begin{align*}
\IE[\langle  \stopped{X}_t^N, h^R \rangle ]
= &
\langle   \stopped{X}_0^N , h^R \rangle
+
\IE\left[
\int_0^{t \wedge \tau^N}  \langle   \stopped{X}_s^N, \cA(h^R) \rangle \dd s
\right]
+ \IE[ \zeta_{t\wedge \tau^N}^N(h^R) ]
+ \IE[\stopped{M}^N_t(h^R)]
\\
\leq &
\hat{\chi}^N(0) + C\varepsilon \Big[\int_0^{t\wedge\tau^N}
\ldual\hat{X}_s^N,\ind\rdual\dd s\Big] +
\cO\Big(\frac{NT\|h^R\|_{C^3}}{JM^3}\Big) \IE[\sup_{s \leq t} 
\ldual \stopped{X}^N_s, \ind \rdual]
\\
\leq &
\chi^N(0) + C \varepsilon\int_0^{t}  \chi^N(s) \dd s + 
\cO\left(\frac{HTN\|h^R\|_{C^3}}{JM^3}\right)
.
\end{align*}
The Monotone Convergence Theorem gives
$\IE[\ldual  \stopped{X}_t^N, h^R \rdual ] \to 
\IE[ \ldual \stopped{X}_t^N, \ind \rdual]$
and so Gronwall's inequality implies
\begin{align*}
\IE[ \ldual \stopped{X}_t^N, \ind \rdual]
\leq 
\left(
\IE[ \ldual \stopped{X}_0^N, \ind \rdual] + \cO\left(\frac{HTN\|h^R\|_{C^3}}{JM^3}\right)
\right)
\exp(Ct).
\end{align*}
Markov's inequality now gives
\begin{align}\label{stoppingtimemarkov}
\IP[\ldual \stopped{X}_t^N, \ind \rdual \geq H]
\leq
\frac{\IE[ \ldual \stopped{X}_t^N, \ind \rdual]}{H}
\leq
\left(
\frac{\IE[\ldual X^N_0, \ind \rdual]}{H} 
+
\cO\left( \frac{NT\|h^R\|_{C^3}}{JM^3} \right)
\right)
\exp(Ct)
.
\end{align}
Noting that $\IP[\sup_{t \leq T} \ldual X_t^N, \ind \rdual \geq H]
=
\IP[\ldual \stopped{X}_t^N, \ind \rdual \geq H]$, we can now conclude,
since for any 
$T$, $\varepsilon > 0$ we can choose $H_1$, $N_1$ such that for 
$H \geq H_1$, $N \geq N_1$ and $t\leq T$
the right hand side of~(\ref{stoppingtimemarkov})
is less than $\varepsilon$.
\end{proof}

\subsection{Verification of Condition~$1'$ of Corollary~\ref{limsup corollary}}

\begin{lemma}
\label{condn 1 of propn fixed radius case}
Under the conditions of either Theorem~\ref{result fixed radius}
or Theorem~\ref{jonoinfinitelimit},
for each  $T$, $\varepsilon > 0$ there is a compact set 
$K_{T,\varepsilon} \subset \IR^d$ such that
\begin{equation} \label{compactcont}
\limsup_{N\to\infty} \IP\left[
\sup_{t\leq T} X^N_t \left(K^c_{T,\varepsilon} \right) > \varepsilon
\right] < \varepsilon
\end{equation}
\end{lemma}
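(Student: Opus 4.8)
The plan is to dominate the mass that escapes to infinity by a single well-chosen test function and then apply a maximal inequality. First I would fix $T,\varepsilon>0$ and choose $L$ large enough that the common support $D$ of the $X_0^N$ satisfies $D\subseteq \ball{L}{0}$. Let $\psi=\psi_L\in C_0^3(\IR^d)$ satisfy $0\le\psi\le1$, $\psi\equiv0$ on $\ball{L}{0}$ and $\psi\equiv1$ on $\ball{2L}{0}^c$, chosen (by rescaling a fixed smooth profile) so that $\|\psi_L\|_{C^3}$ is bounded uniformly in $L\ge1$ while $\|\Delta\psi_L\|_\infty=\cO(1/L^2)$. Setting $K_{T,\varepsilon}:=\overline{\ball{2L}{0}}$ we have $\ind_{K^c}\le\psi$, and since $X_0^N$ is supported in $\ball{L}{0}$ we have $\ldual X_0^N,\psi\rdual=0$.

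The next step is to write the semimartingale decomposition of $\ldual X_t^N,\psi\rdual$. By Lemmas~\ref{semimartingale decomp without time} and~\ref{roughgenerator} together with Definition~\ref{spatialgenerator},
\begin{equation*}
\ldual X_t^N,\psi\rdual=\int_0^t\ldual X_s^N,\tfrac{m(N)}{2}\Delta\psi\rdual\,\dd s+\zeta_t^N(\psi)+M_t^N(\psi),
\end{equation*}
where $M^N(\psi)$ is a mean-zero local martingale and $|\zeta_t^N(\psi)|\le c_N\int_0^t\ldual X_s^N,\ind\rdual\,\dd s$ with $c_N\to0$ in both the fixed and variable radius cases (in the variable case $c_N\sim\frac{N}{JM^3}\int u(r)r^{d+3}\mu^N(\dd r)\le\frac1M\cdot\frac{N}{JM^2}\int u(r)r^{d+2}\mu^N(\dd r)=\cO(1/M)$, since $m(N)$ converges). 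Because $\psi\ge0$ the process $Y_t:=\ldual X_t^N,\psi\rdual$ is nonnegative, and $X_t^N(K^c)\le Y_t$, so it will suffice to bound $\sup_{t\le T}Y_t$.

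The key step will be a first-moment bound combined with Lenglart's domination inequality, which lets us avoid any control of the quadratic variation of $M^N$; this is essential in the variable radius case, where that quadratic variation is unbounded. I set
\begin{equation*}
A_t:=\Big(\tfrac{m(N)}{2}\|\Delta\psi\|_\infty+c_N\Big)\int_0^t\ldual X_s^N,\ind\rdual\,\dd s,
\end{equation*}
a continuous, nondecreasing, predictable process. Taking expectations in the decomposition stopped at a bounded stopping time $\tau$ (localising $M^N$ and using that $Y\le\ldual X^N,\ind\rdual\in L^1$ by Lemma~\ref{condn 2 of propn}) gives the domination $\IE[Y_\tau]\le\IE[A_\tau]$. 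Moreover Lemma~\ref{condn 2 of propn} bounds $\sup_{s\le T}\IE[\ldual X_s^N,\ind\rdual]$ uniformly in $N$ (the initial masses converge and $N/(JM^3)\to0$), so
\begin{equation*}
\IE[A_T]\le\Big(\tfrac{m(N)}{2}\|\Delta\psi\|_\infty+c_N\Big)T\,\sup_{s\le T}\IE[\ldual X_s^N,\ind\rdual]=\cO(1/L^2)+o(1),\qquad N\to\infty.
\end{equation*}
Lenglart's inequality then yields $\IP[\sup_{t\le T}Y_t\ge\varepsilon]\le\frac1\varepsilon\IE[A_T\wedge d]+\IP[A_T\ge d]$ for every $d>0$; optimising over $d$ gives $\IP[\sup_{t\le T}Y_t\ge\varepsilon]\le2\sqrt{\IE[A_T]/\varepsilon}$. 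Hence $\limsup_{N\to\infty}\IP[\sup_{t\le T}X_t^N(K^c)>\varepsilon]=\cO\big(1/(L\sqrt\varepsilon)\big)$, and choosing $L$ large enough completes the proof.

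I expect the main obstacle to be the rigorous justification of the domination $\IE[Y_\tau]\le\IE[A_\tau]$, that is, the localisation argument showing $\IE[M_\tau^N(\psi)]=0$ for the relevant stopping times even though $M^N$ is only a local martingale (and, in the variable radius case, has heavy-tailed jumps); this is precisely where the uniform $L^1$ control of the total mass from Lemma~\ref{condn 2 of propn} is needed. In the fixed radius case one can sidestep Lenglart entirely and instead bound $\IE[\sup_{t\le T}|M_t^N(\psi)|]$ by Doob's $L^2$-inequality, since there the quadratic variation coefficient of~(\ref{quadratic variation}) stays bounded and the bracket is controlled by $\IE[\ldual X_s^N,\psi^2\rdual]\le\IE[\ldual X_s^N,\psi\rdual]$, which is itself $\cO(1/L^2)$ by the same first-moment estimate.
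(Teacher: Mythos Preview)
Your argument is correct and takes a genuinely different route from the paper's. The paper does not use Lenglart's inequality. Instead it introduces the heat semigroup $P^{(m(N))}_t$ and the time-dependent test function $\phi_s=P^{(m(N))}_{t-s}(h_n)$, so that the drift term in Lemma~\ref{roughgenerator} vanishes identically; this yields a sharp bound on $\IE[\ldual X_t^N,h_n\rdual]$ in terms of $\ldual X_0^N,P_t^{(m(N))}h_n\rdual$, which is small because Brownian motion started inside $D$ is unlikely to travel far. The sup over $t$ of the drift $\int_0^t\ldual X_s^N,\cA^N(h_n)\rdual\,\dd s$ is then handled by a doubling trick: $|\cA^N(h_n)|$ is supported outside $\cB_{n-1}(0)$, so it is dominated by $h_m$ with $m=\lfloor(n-1)/2\rfloor$, and the same heat-semigroup bound applies to $\IE[\ldual X_s^N,h_m\rdual]$. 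The martingale term is controlled in $L^1$ by rearranging the semimartingale decomposition and applying Doob's maximal inequality to $\IE[|M_T^N(h_n)|]$, which again sidesteps any quadratic-variation estimate and therefore also works in the variable radius case. Your approach is more economical: the single observation $\|\Delta\psi_L\|_\infty=\cO(1/L^2)$ already makes the drift small, and Lenglart lets you pass from the first-moment domination $\IE[Y_\tau]\le\IE[A_\tau]$ directly to a bound on the running supremum, handling drift and martingale in one stroke. The paper's method has the minor advantage of not needing the quantitative rate $1/L^2$ for the Laplacian (only that the support of $\Delta h_n$ moves out to infinity), but yours is shorter and avoids the heat semigroup entirely. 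The localisation concern you flag is real but easily dispatched here: for each fixed $N$ the jump rate of $X^N$ is dominated by a pure birth process on the volume of its support, so the compensated process is a true martingale and optional stopping applies.
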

\begin{proof}

Let $h \in C^\infty(\IR^d)$ satisfy 
$$h|_{\cB_{1}(0)} = 0,\quad\mbox{ and }\quad  h|_{\cB_{2}(0)^C} = 1.$$ 
Define $h_n(x) := h(x/n)$. It will evidently suffice to show that for
sufficiently large $n$,
\begin{equation}
\label{formulation in h}
\limsup_{N\to\infty} \IP\left[
\sup_{t\leq T} \ldual X^N_t , h_n\rdual > \varepsilon
\right] < \varepsilon .
\end{equation}
We note that the $C^3$ norms of $\Delta h_n$ are uniformly bounded in $n$, 
so that by Lemma~\ref{uniform lipschitz}
the $C^3$ norms of $\Delta P_{t}^{(m)} (h_n)$ are uniformly bounded in 
$n$ and $t$.

With $m(N)$ as in Definition~\ref{spatialgenerator} 
we set $\phi_s := \Delta P_{t-s}^{(m(N))} (h_n)$, so that
\begin{align*}
\dot{\phi_s}(x) + \cA^N(\phi_s)(x)
= 0
.
\end{align*}
By Lemma~\ref{roughgenerator} and Lemma~\ref{uniform lipschitz}
\begin{align}
\label{bound on l applied to heat semigp}
\left| \int_0^t \cL^N (P_{t-s}^{(m(N))} (h_n))(X^N_s) \dd s\right|
=
\cO \left(\frac{N\|h\|_{C^3}}{JM^3}\right) \int_0^t\ldual X^N_s, \ind \rdual 
\dd s
,
\end{align}
and this bound is uniform in $n$.
In particular,
using~(\ref{bound on total mass}),
\begin{align}
\nonumber
\IE[\ldual X_t^N,h_n\rdual]&=
\IE[\ldual X_0^N, P_t^{(m(N))}h_n\rdual]
+\IE\Bigg[
\cO \left(\frac{N\|h\|_{C^3}}{JM^3}\right) \int_0^t\ldual X^N_s, \ind \rdual 
\dd s
\Bigg]
\\
&\leq
\IE[\ldual X_0^N, P_t^{(m(N))}h_n\rdual]
+\cO \left(\frac{N}{JM^3}\right) T
\exp\Big(\cO\big(\frac{N\|h\|_{C^3}}{JM^3}\big)T\Big)
\ldual X^N_0, \ind \rdual
.
\label{bound on expectation}
\end{align}
We recall that
\begin{align}
\label{semimart decomp again}
\ldual X_t^N, h_n \rdual = \ldual X_0^N , h_n \rdual + 
\int_0^t  \ldual X_s^N, \cA^N(h_n) \rdual \dd s + \zeta_t^N(h_n) + M^N_t(h_n)
.
\end{align}
We shall consider each term on the right hand side separately.
The first term will be zero for sufficiently large $n$, since 
$\textrm{supp}(X_0^N)\subseteq D$ for all $N$, where $D\subseteq \IR^d$ is
compact.
The bound~(\ref{bound on expectation}) will help us to control
the integral term. First note that, since $m(N)$ converges,
\begin{align*}
\IE \left[ \sup_{t \leq T} \left| \int_0^t  \ldual X_s^N, \cA^N(h_n) \rdual 
\dd s \right| \right]
\leq &
\IE \bigg[ \int_0^T  \ldual X_s^N, |\cA^N(h_n)| \rdual \dd s  \bigg]
\\
\leq &
C \IE \bigg[ \int_0^T  \ldual X_s^N, \ind_{\cB_{n-1}(0)^C} \rdual \dd s 
 \bigg]
,
\end{align*}
where $C$ is independent of $n$ and $N$. 
Now set $m:= \lfloor \frac{n-1}{2} \rfloor$.
\begin{align}
\nonumber
\IE \bigg[ \int_0^T  \ldual X_s^N, \ind_{\cB_{n-1}(0)^C} \rdual \dd s
 \bigg] 
\leq 
\IE \bigg[ \int_0^T  \ldual  X_s^N, h_m \rdual \dd s \bigg] &
\leq 
\int_0^T  \IE[\ldual X_s^N, h_m \rdual ]
\dd s
\\
\label{use of bound on l}
\leq 
\int_0^T  \ldual X^N_0, P_{t}^{(m(N))} (h_m)\rdual  \dd s
+\cO &\left(\frac{N\|h_m\|_{C^3}}{JM^3}\right) T^2
\exp\Big(\cO\big(\frac{N}{JM^3}\big)T\Big)\ldual X_0^N,\ind\rdual,
\end{align}
where we used~(\ref{bound on expectation}) to 
obtain~(\ref{use of bound on l}). Under the assumptions of 
Theorem~\ref{result fixed radius} or Theorem~\ref{jonoinfinitelimit}, 
$N/(JM^2)\to C_1$ and $M\to\infty$ as $N\to\infty$ and so
we can take $N$ sufficiently large that the error term is less 
than $\varepsilon^2/32$.

Denoting standard Brownian motion by $\{B_t\}_{t\geq 0}$, consider now
\begin{align}
\nonumber
\int_0^T  \ldual X_0^N, P_{s}^{(m(N))} (h_m) \rdual \dd s
= & \int_0^T  \ldual X^N_0, \IE^x[ h_m(B_{m(N)s})] \rdual \dd s \\
\nonumber
\leq &
\int_0^T  \ldual X^N_0, \IE^x[ \ind_{\compliment{\cB_m(0)}}(B_{m(N)s})]\rdual
\dd s \\
\nonumber
\leq &
T \Big[ X_0^N( \compliment{\cB_{m/2}(0)} ) + X_0^N(\cB_{m/2}(0))\sup_{s \leq T}
\IP^0[|B_{m(N)s}| \geq m/2] \Big] \\
\leq &
T \Big[ X_0^N( \compliment{\cB_{m/2}(0)} ) + 
X_0^N(\IR^d)\sup_{s \leq T}\IP^0[|B_{m(N)s}| \geq m/2] \Big],
\label{doubling trick}
\end{align}
which (by our assumptions that $\mathrm{supp} (X_0^N)\subseteq D$ for all $N$
and that $m(N)$ converges)  
tends to $0$ as $m \to \infty$ uniformly in $N$, so in particular is
$<\varepsilon^2/32$ for sufficiently large $m$. 

Combining the estimates above, we have shown that
given $\varepsilon >0$, there exist $N_0$, $n_0$ such that for $N\geq N_0$ and
$n\geq n_0$,
$$\IE \left[ \sup_{t \leq T} \left| \int_0^t  \ldual X_s^N, \cA^N(h_n) 
\rdual \dd s \right|  \right] <\varepsilon^2/16.$$

From Lemma~\ref{roughgenerator}, 
$$\sup_{t\leq T}|\zeta_t^N(h_n)|\leq \cO\Big(\frac{N\|h_n\|_{C^3}}{JM^3}\Big)\int_0^T
\ldual X_s^N,{\mathbf 1}
\rdual \dd s$$
and we can estimate the right hand side 
through~(\ref{bound on total mass}) from 
Lemma~\ref{condn 2 of propn}
and see that for sufficiently large $N$ it is bounded above 
by $\varepsilon^2/16$.

In order to control the martingale term we shall control 
$\IE[|M_T^N(h_n)|]$ and then apply Doob's inequality. 

For sufficiently large $N$, 
the argument that gave us~(\ref{doubling trick}) allows us to 
bound~(\ref{bound on expectation})
with $t=T$ by $\varepsilon^2/8$.
Rearranging~(\ref{semimart decomp again})
and using the triangle inequality, given $\varepsilon >0$, 
there exist $N_0$, $n_0$, such that uniformly in $n\geq n_0$, for 
$N\geq N_0$,
$\IE[|M^N_T(h_n)|] \leq \varepsilon^2/4$.
Therefore by Doob's maximal inequality, for any such $n$ and $N$, we have that
\begin{align*}
\IP[ \sup_{t \leq T} |M^N_t(h_n)| > \frac{\varepsilon}{2} ] 
\leq \frac{2\IE[|M^N_T(h_n)|]}{\varepsilon}<\frac{\varepsilon}{2}.
\end{align*}
Combining the estimates above with another application of Markov's 
inequality to the sum of the remaining terms on the 
right hand side of~(\ref{semimart decomp again})
we have verified~(\ref{formulation in h})
and the proof is complete.
\end{proof}

\subsubsection{Tightness of projections: fixed radius case}

We shall verify Condition~\ref{cond 3 propn 3.1}
of Proposition~\ref{3.1propn} separately for our two cases. In the
fixed radius case it is relatively straightforward as the quadratic
variation of the martingale part of our semimartingale 
decomposition~(\ref{semimart decomp again}) will remain bounded as
$N\to\infty$. In this section we shall deal with that case and 
moreover check the conditions of 
Corollary~\ref{corollaryforcontinuous} so that we can deduce that 
all the limit points of $\{X^N\}_{N\geq 1}$ are in fact continuous 
processes. We shall exploit some standard results that
we record here for convenience.
\begin{propn}[\cite{jacodshiryaev2003limittheorems}, Chapter VI, Part of
Proposition 3.26] 
\label{tightnessofprojections}
A sequence $\{Y^N\}_{N \geq 1}$ of c\`adl\`ag $\IR^d$-valued processes
is tight and all limit points of the sequence of laws of $Y^N$ are 
laws of continuous processes if and only if the following two conditions 
are satisfied:
\begin{enumerate}
\item
$\forall T \in \IN$, $\varepsilon > 0$, $\exists N_0 \in \IN$, 
$\Gamma \in \IR$ such that
\begin{align*}
N \geq N_0 \Rightarrow \IP \bigg[ \sup_{t \leq T} | Y^N_t | > \Gamma \bigg] \leq \varepsilon.
\end{align*}
\item
$\forall T \in \IN$, $\varepsilon >0$, $\eta > 0$, $\exists N_0 \in \IN$, $\delta > 0$ such that
\begin{align*}
N \geq N_0 \Rightarrow \IP \bigg[ \sup_{0 \leq t \leq T- \delta} \Big( \sup_{s_1,s_2 \in [t, t + \delta]}
|Y^N_{s_1} - Y^N_{s_2}| \Big) > \eta \bigg] \leq \varepsilon.
\end{align*}
\end{enumerate}
\end{propn}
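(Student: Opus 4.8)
Since this criterion is quoted verbatim from \cite{jacodshiryaev2003limittheorems}, I will only sketch the ideas; the plan is to derive it from Prokhorov's theorem together with the Arzel\`a--Ascoli description of relatively compact subsets of Skorokhod space, exactly as the analogous statement in \cite{ethier/kurtz:1986} is obtained. Throughout fix a horizon $T$ and write $w(x,\delta):=\sup\{|x(s_1)-x(s_2)|:\,s_1,s_2\in[0,T],\,|s_1-s_2|\leq\delta\}$ for the genuine modulus of continuity and $w'(x,\delta)$ for the weaker Skorokhod modulus, which permits a single jump inside each window. The point to keep in mind is that tightness in the Skorokhod topology is governed by $w'$, whereas the hypothesis in Condition~2 controls the stronger $w$; it is precisely this upgrade that will force the limit points to be continuous.

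For the forward implication I would first reduce to the fixed horizon $[0,T]$ and dispose of the finitely many indices $N<N_0$ by the fact that each individual law on $D([0,T],\IR^d)$ is tight, enlarging the compact sets produced below to absorb them (the same device as in Corollary~\ref{limsup corollary}). Given $\varepsilon>0$, Condition~1 supplies $\Gamma$ with $\sup_{N\geq N_0}\IP[\sup_{t\leq T}|Y^N_t|>\Gamma]\leq\varepsilon/2$, and for each $k\geq1$ Condition~2, applied with $\eta=1/k$ and error $\varepsilon2^{-k-1}$, supplies $\delta_k$. Setting
$$K_\varepsilon:=\Big\{x\in D([0,T],\IR^d):\ \sup_{t\leq T}|x(t)|\leq\Gamma,\ w(x,\delta_k)\leq\tfrac1k\ \text{for all }k\Big\},$$
a union bound gives $\inf_{N\geq N_0}\IP[Y^N\in K_\varepsilon]\geq1-\varepsilon$. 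Because $K_\varepsilon$ is defined through the true modulus $w$ rather than $w'$, the Arzel\`a--Ascoli theorem shows that the closure $\overline{K_\varepsilon}$ is compact and, crucially, is contained in $C([0,T],\IR^d)$. Prokhorov's theorem then yields tightness on each $D([0,T],\IR^d)$, and a routine diagonal argument over $T\in\IN$ promotes this to tightness on $D([0,\infty),\IR^d)$.

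Continuity of the limit points I would read off from the very same compact sets. If $Y^{N_j}\Rightarrow Y$ along a subsequence, then since $\overline{K_\varepsilon}$ is closed the Portmanteau theorem gives $\IP[Y\in\overline{K_\varepsilon}]\geq\limsup_j\IP[Y^{N_j}\in\overline{K_\varepsilon}]\geq1-\varepsilon$; as $\overline{K_\varepsilon}\subseteq C([0,T],\IR^d)$, letting $\varepsilon\downarrow0$ shows that $Y$ has continuous paths almost surely on $[0,T]$, hence on $[0,\infty)$.

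For the converse I would run the standard contradiction argument. Tightness with Prokhorov gives, for each $\varepsilon$, a compact $K\subseteq D([0,T],\IR^d)$ with $\inf_N\IP[Y^N\in K]\geq1-\varepsilon$; compactness bounds $\sup_{x\in K}\|x\|_\infty$ and delivers Condition~1. If Condition~2 failed there would be $\eta,\varepsilon>0$, a sequence $\delta_n\downarrow0$ and indices $N_n\to\infty$ with $\IP[w(Y^{N_n},\delta_n)>\eta]>\varepsilon$; passing to a weakly convergent subsequence and invoking the Skorokhod representation theorem to realise the convergence almost surely on a common space, one transfers a lower bound on the modulus to the limit and produces a discontinuity of size $\geq\eta$ with positive probability, contradicting that all limit points are continuous. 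The main obstacle, and the only genuinely delicate point, is exactly this last step: the passage from the Skorokhod modulus $w'$ controlled by bare tightness to the true modulus $w$. It relies on the lower semicontinuity of $w$ under Skorokhod convergence \emph{to a continuous limit}, together with the Skorokhod representation that makes this semicontinuity usable pathwise.
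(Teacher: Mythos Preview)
The paper does not prove this proposition; it is quoted directly from \cite{jacodshiryaev2003limittheorems} as a standard tool, with no argument given. Your sketch is the standard Prokhorov/Arzel\`a--Ascoli route and is correct.

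One small remark on the converse, since you flagged it as delicate: the phrase ``transfers a lower bound on the modulus to the limit'' is slightly misleading because $\delta_n\to 0$, so there is no fixed modulus to pass to the limit. What actually makes the contradiction work is that Skorokhod convergence to a continuous limit is uniform convergence; hence on the almost sure event $\{Y^{N_n}\to Y\text{ uniformly}\}$ one has
\[
w(Y^{N_n},\delta_n)\;\leq\; w(Y,\delta_n)+2\|Y^{N_n}-Y\|_\infty\;\longrightarrow\;0,
\]
so $\IP[w(Y^{N_n},\delta_n)>\eta]\to 0$ by dominated convergence, contradicting the assumed lower bound $\varepsilon$. This is precisely the mechanism you pointed to (lower semicontinuity of $w$ under convergence to a continuous limit), just made explicit.
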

\begin{thm}[\cite{jacodshiryaev2003limittheorems}, Chapter VI, 
Theorem~4.13] \label{tightqv}
If for each $N$, $M^N$ is a locally square integrable local martingale, then 
sufficient conditions for the sequence $\{M^N\}_{N\geq 1}$
to be tight are:
\begin{enumerate}
\item
The sequence $\{M^N_0\}_{N\geq 1}$ is tight.
\item
The sequence $\{\lquad M^N \rquad\}_{N\geq 1}$ is tight with all 
limit points being continuous.
\end{enumerate}
\end{thm}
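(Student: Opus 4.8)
The plan is to verify the Aldous--Rebolledo tightness criterion for the sequence $\{M^N\}_{N\geq 1}$, the essential analytic input being Lenglart's domination inequality. Since each $M^N$ is a locally square integrable local martingale, the process $(M^N)^2-\lquad M^N\rquad$ is a local martingale, so $(M^N)^2$ is Lenglart-dominated by the predictable increasing process $\lquad M^N\rquad$. After the usual localisation to reduce to the genuinely square integrable case, Lenglart's inequality yields, for every stopping time $\tau$ and all $\eta,\varepsilon>0$,
\begin{align*}
\IP\Big[\sup_{s\leq \tau}\big|M^N_s-M^N_0\big|^2\geq \eta\Big]
\leq \frac{\varepsilon}{\eta}+\IP\big[\lquad M^N\rquad_\tau\geq\varepsilon\big].
\end{align*}
This single estimate, applied first over a fixed horizon and then over small random intervals, supplies both halves of the criterion.

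First I would establish the compact containment (stochastic boundedness) condition. Taking $\tau=T$ above, the right-hand side is made small by choosing $\varepsilon$ large (using tightness of $\lquad M^N\rquad_T$) and then $\eta$ large; combined with tightness of $\{M^N_0\}$, this bounds $\sup_{t\leq T}|M^N_t|$ stochastically, uniformly in $N$. Next I would verify Aldous's increment condition: for any stopping times $\sigma_N\leq\tau_N\leq(\sigma_N+\delta)\wedge T$, applying the inequality to the martingale shifted at $\sigma_N$, whose predictable quadratic variation is $\lquad M^N\rquad_{\sigma_N+\cdot}-\lquad M^N\rquad_{\sigma_N}$, gives
\begin{align*}
\IP\big[\big|M^N_{\tau_N}-M^N_{\sigma_N}\big|\geq\eta\big]
\leq \frac{\varepsilon}{\eta^2}+\IP\big[\lquad M^N\rquad_{\sigma_N+\delta}-\lquad M^N\rquad_{\sigma_N}\geq\varepsilon\big].
\end{align*}
Everything then reduces to controlling the oscillation of $\lquad M^N\rquad$ over time windows of length $\delta$, uniformly in $N$.

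The hard part, and the place where the full strength of the hypothesis is needed, is precisely this oscillation control. Tightness of $\{\lquad M^N\rquad\}$ by itself is not enough: a priori these increasing processes could concentrate their increments into shrinking windows (asymptotic jumps), leaving $\lquad M^N\rquad_{\sigma_N+\delta}-\lquad M^N\rquad_{\sigma_N}$ bounded away from zero along a subsequence even as $\delta\to0$. The assumption that \emph{every} limit point of $\{\lquad M^N\rquad\}$ is continuous rules this out: tightness with continuous limit points is equivalent to tightness together with uniform vanishing of the modulus of continuity, so $\lquad M^N\rquad_{\sigma_N+\delta}-\lquad M^N\rquad_{\sigma_N}\to0$ in probability uniformly in $N$ as $\delta\to0$. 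Feeding this into the two displays makes both right-hand sides uniformly small, so the Aldous--Rebolledo criterion applies and yields tightness of $\{M^N\}_{N\geq 1}$.
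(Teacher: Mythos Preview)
The paper does not prove this statement at all; it is quoted verbatim as Theorem~4.13 from Chapter~VI of Jacod and Shiryaev and used as a black box in the proof of Lemma~\ref{lemma for projections}. Your sketch is the standard Aldous--Rebolledo argument via Lenglart's inequality, which is indeed the route taken in the cited reference, and the steps you outline are correct: Lenglart transfers control of $(M^N)^2$ to $\lquad M^N\rquad$, tightness of $\{\lquad M^N\rquad_T\}$ handles compact containment, and C-tightness of $\{\lquad M^N\rquad\}$ (equivalently, tightness with only continuous limit points, as in the paper's Proposition~\ref{JS3.26}) supplies the uniform oscillation bound needed for the Aldous increment condition.
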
 
\begin{propn}[\cite{jacodshiryaev2003limittheorems}, Chapter VI, Part of
Proposition 3.26] 
\label{JS3.26}
A sequence $\{Y^N\}_{N \geq 1}$ is tight and all limit points of the 
sequence of laws of $Y^N$ are laws of continuous processes if and only 
if $\{Y^N\}_{N \geq 1}$ is tight and for all $T > 0$, $\varepsilon > 0$,
\begin{align*}
\lim_{N \to \infty} \IP[\sup_{t \leq T} |Y^N_t - Y^N_{t-}| > \varepsilon ] = 0
.
\end{align*}
\end{propn}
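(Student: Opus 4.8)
The plan is to prove the two implications of the stated equivalence, observing first that tightness of $\{Y^N\}_{N\geq 1}$ is \emph{assumed on both sides}, so that all that remains is to characterise continuity of the limit points in terms of the maximal jump functional
\[
J_T(y):=\sup_{t\leq T}|y(t)-y(t-)|.
\]
The two facts about the Skorokhod ($J_1$) topology that I would invoke are: (i) if $y^n\to y$ in $J_1$ on $[0,T]$, then $\liminf_n J_T(y^n)\geq J_T(y)$, that is, jumps of the limit are shadowed by jumps of the approximants; and (ii) if in addition the limit $y$ is continuous, the convergence is in fact locally uniform, whence $J_T(y^n)\to 0$. Both follow by inspecting the time-change maps $\lambda_n$ that define $J_1$-convergence. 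Throughout I would pass to the probability space furnished by the Skorokhod representation theorem, so that weak convergence may be replaced by almost sure $J_1$-convergence.

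For the reverse implication, suppose the sequence is tight and $\IP[J_T(Y^N)>\varepsilon]\to 0$ for all $T,\varepsilon>0$. Let $Y$ be any limit point, say $Y^{N_k}\Rightarrow Y$. Using Skorokhod representation I realise versions $\tilde Y^{N_k}\to\tilde Y$ almost surely in $J_1$, with $\tilde Y\stackrel{d}{=}Y$. Since $J_T(Y^{N_k})\to 0$ in probability, a further subsequence gives $J_T(\tilde Y^{N_k})\to 0$ almost surely, and fact (i) then forces $J_T(\tilde Y)=0$ for every $T$ in a cofinal set, so $\tilde Y$, and hence $Y$, is continuous. For the forward implication I argue by contradiction: if $\IP[J_T(Y^N)>\varepsilon]\not\to 0$ for some $T,\varepsilon$, there is a subsequence along which this probability stays bounded below; tightness lets me extract a weakly convergent sub-subsequence whose limit is continuous by hypothesis, and then Skorokhod representation together with fact (ii) yields $J_T(\cdot)\to 0$ in probability along that sub-subsequence, a contradiction.

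The main obstacle I anticipate is the careful justification of facts (i) and (ii), and in particular the behaviour of the maximal jump functional at the right endpoint $T$: a jump of the limit located exactly at $T$, or a jump of an approximant pushed across $T$ by the time change $\lambda_n$, can be lost or spuriously created. The standard remedy is to work on the slightly larger interval $[0,T+1]$, so that any jump near $T$ lies in the interior, and to note that the set of times that are fixed discontinuities of the limit law is at most countable, so $T$ may be chosen from its complement; this makes $J_T$ almost surely continuous at the limit point. Once these boundary points are handled, the remaining steps, namely the reduction from convergence in probability to almost sure convergence along subsequences and the passage back from the Skorokhod copies to the original laws, are routine.
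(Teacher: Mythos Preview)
The paper does not prove this proposition at all: it is quoted verbatim as part of Proposition~3.26 from Chapter~VI of Jacod and Shiryaev and is used as a black box in the tightness argument, so there is nothing to compare your proof against.

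Your argument is the standard one and is correct. The two properties of the $J_1$ topology you isolate are exactly the right ingredients: lower semicontinuity of the maximal jump (your fact~(i)) gives the reverse implication, and the upgrade from $J_1$ to local uniform convergence when the limit is continuous (your fact~(ii)) gives the forward one. Your use of Skorokhod representation to turn weak convergence into almost sure convergence, and the subsequence manoeuvre to pass from convergence in probability to almost sure convergence, are routine and correctly applied. The endpoint issue you flag is genuine but your remedy is the usual one; alternatively one can simply observe that in Jacod--Shiryaev's framework the process is defined on $[0,\infty)$ and the maximal jump over $[0,T]$ is a well-defined functional whose lower semicontinuity can be established directly from the characterisation of $J_1$-convergence via time changes, without needing to avoid a countable set of bad $T$'s.
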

\begin{lemma}
\label{lemma for projections}
Under the assumptions of Theorem~\ref{result fixed radius}, for each
$\phi\in C_0^3(\IR^d)$, $\{\ldual X^N_\cdot,\phi\rdual\}_{N\geq 1}$ is
tight and all the limit points are continuous.
\end{lemma}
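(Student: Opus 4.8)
The plan is to work with the semimartingale decomposition of Lemma~\ref{semimartingale decomp without time}, writing $Y^N_t := \ldual X^N_t,\phi\rdual = \ldual X^N_0,\phi\rdual + A^N_t + M^N_t(\phi)$, where $A^N_t := \int_0^t\cL^N(\phi)(X^N_s)\dd s$ is the finite-variation part, and to prove tightness with continuous limit points for $A^N$ and for the martingale $M^N(\phi)$ separately before recombining. Tightness of the initial values is immediate, since $\ldual X^N_0,\phi\rdual\to\ldual X_0,\phi\rdual$.

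For the finite-variation part I would appeal to Lemma~\ref{roughgenerator}, which gives $A^N_t = \int_0^t\ldual X^N_s, \tfrac{m(N)}{2}\Delta\phi\rdual\dd s + \zeta^N_t(\phi)$ with $\cA^N=\tfrac{m(N)}{2}\Delta$ as in Definition~\ref{spatialgenerator}. Under the hypotheses of Theorem~\ref{result fixed radius} we have $m(N)\to m$, while the error obeys $|\zeta^N_t(\phi)|\leq \cO(N/(JM^3))\int_0^t\ldual X^N_s,\ind\rdual\dd s$ with $N/(JM^3)=(N/(JM^2))/M\to 0$ because $M\to\infty$. Thus $t\mapsto A^N_t$ is, up to a vanishing error, Lipschitz with random constant bounded by $C\sup_{s\leq T}\ldual X^N_s,\ind\rdual$. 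The uniform mass control $(\ref{bound on total mass})$ and part~2 of Lemma~\ref{condn 2 of propn} make this constant tight, so $A^N$ satisfies both conditions of Proposition~\ref{tightnessofprojections}, namely uniform boundedness of its supremum and equicontinuity, and is therefore tight with all limit points continuous.

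For the martingale part I would invoke Theorem~\ref{tightqv}. Here $M^N_0(\phi)=0$ is trivially tight, so it remains to show that the quadratic variation $\lquad M^N(\phi)\rquad$ of $(\ref{quadratic variation})$ is tight with continuous limit points. The key point is that its prefactor $KN|\cB_r|^2u^2/(J^2M^d)$ converges by Condition~\ref{condition 3 of finite variance case}, while the $\cO(\|\phi\|_{C^1}^2/M)$ correction vanishes as $M\to\infty$ and the negative term is dominated by the leading one; hence the time-derivative of $\lquad M^N(\phi)\rquad$ is bounded by $C\|\phi\|_\infty^2\ldual X^N_s,\ind\rdual$. As before this renders $t\mapsto\lquad M^N(\phi)\rquad_t$ Lipschitz with a tight constant, so a further application of Proposition~\ref{tightnessofprojections} shows the quadratic variations are tight with continuous limit points, and Theorem~\ref{tightqv} then yields tightness of $\{M^N(\phi)\}_{N\geq 1}$.

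To upgrade $M^N(\phi)$ to continuous limit points I would use Proposition~\ref{JS3.26}, which asks that the maximal jump vanish. A single reproduction event of scaled radius $r/M$ and impact $u/J$ changes $\ldual X^N,\phi\rdual$ by at most $K(u/J)(|\cB_r|/M^d)\|\phi\|_\infty$, and combining Conditions~\ref{fixed radius cond2} and~\ref{condition 3 of finite variance case} gives $K/(JM^d)=(KN/(J^2M^d))(J/N)\to 0$, so all jumps are uniformly $o(1)$ and $\sup_{t\leq T}|M^N_t(\phi)-M^N_{t-}(\phi)|\to 0$. Finally, since $A^N$ and $M^N(\phi)$ are each tight with continuous limit points and $Y^N_0$ converges, the pair $(A^N,M^N(\phi))$ and hence the sum $Y^N$ is tight with all limit points continuous. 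I expect the main obstacle to be the bookkeeping in the quadratic-variation step: checking that, after the cancellations in $(\ref{quadratic variation})$, the increment rate really is dominated uniformly in $N$ by the total mass, so that the convergence of $KN/(J^2M^d)$ together with $m(N)\to m$ can be turned into genuine equicontinuity rather than a mere bound.
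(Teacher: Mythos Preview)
Your proposal is correct and follows essentially the same approach as the paper: both decompose $\ldual X^N_\cdot,\phi\rdual$ via Lemma~\ref{semimartingale decomp without time}, control the drift through its Lipschitz dependence on $\sup_{s\leq T}\ldual X^N_s,\ind\rdual$ and Lemma~\ref{condn 2 of propn}, handle the martingale via Theorem~\ref{tightqv} applied to the quadratic variation~(\ref{quadratic variation}), and obtain continuity of limit points from Proposition~\ref{JS3.26} using the jump-size bound $K/(JM^d)\to 0$. The only organisational difference is that the paper verifies the two conditions of Proposition~\ref{tightnessofprojections} directly for $Y^N$ (condition~1 from the mass bound, condition~2 by splitting into drift and martingale), whereas you establish tightness with continuous limits for $A^N$ and $M^N(\phi)$ separately and then recombine; since Proposition~\ref{tightnessofprojections} is an ``if and only if'' and its conditions are subadditive, this is harmless.
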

\begin{proof}
Consider $Y_t^N := \ldual X^N_t,\phi \rdual$ with $\phi \in C_0^3(\IR^d)$.
Since $|Y_t^N| \leq \|\phi\|_{\infty} \ldual X^N_t, \ind \rdual$, the first
condition of Proposition~\ref{tightnessofprojections} 
follows immediately from our verification of Condition~$2'$
of Corollary~\ref{limsup corollary}.
To check the second condition of Proposition~\ref{tightnessofprojections},
we once again use the semimartingale decomposition
\begin{align*}
\ldual X^N_t, \phi \rdual
=
\ldual X^N_0, \phi \rdual
+
\int_0^t
\ldual X^N_s, \cA^N(\phi) \rdual
\dd s
+
\zeta^N_t (\phi)
+
M^N_t(\phi)
.
\end{align*}
Evidently
\begin{align*}
\left|
\int_0^{s_1}
\ldual X^N_s, \cA^N(\phi) \rdual
\dd s
-
\int_0^{s_2}
\ldual X^N_s, \cA(\phi) \rdual
\dd s
\right|
\leq &
C(\phi) |s_1 - s_2| \sup_{t \leq T}\ldual X^N_t, \ind \rdual
\end{align*}
and combining with
$$\abs \zeta^N_t(\phi) \abs \leq 
\cO\left(\frac{Nur^{d+3}\|\phi\|_{C^3}}{JM^3}\right) t \sup_{s\leq t} 
\ldual X^N_s, \ind \rdual$$ 
and~\eqref{stoppingtimemarkov} we see that
under the conditions of Theorem~\ref{result fixed radius} both these
terms satisfy Condition~2 of Proposition~\ref{tightnessofprojections}
and our problem is reduced to 
showing tightness with continuous limits of the martingale part.
Theorem~\ref{tightqv} tells us that it suffices to consider the
quadratic variation.

Recall from~(\ref{quadratic variation}) that
\begin{multline}\label{quadraticvariationfixedscaled}
\lquad M^N(\phi) \rquad_t
=
\int_0^t
 \int _{\IR^d}  \frac{K N |\cB_{r}| u^2}{J^2 M^{d}}
\bigg[ \bigg{(} \phi(x)^2 X^N_s(x) - \frac{1}{K}
\Big( \frac{M^d}{|\cB_{r}|} 
\int_{\cB_r^M(x)}\phi(y) X^N_s(y) \dd y \Big{)} ^2 \bigg)
\\
+ \cO\big( \frac{\|\phi\|_{C^1}^2}{M} \big) X^N_s(x)
\bigg]  \dd x \; \dd s 
\\
\leq 
\int_0^t
 \int _{\IR^d}  \frac{K N |\cB_{r}| u^2}{J^2 M^{d}}
\bigg[\ldual X_s^N,\phi^2\rdual 
+ \cO\big( \frac{\|\phi\|_{C^1}^2}{M} \big) \ldual X^N_s,1\rdual\bigg] ds
.
\end{multline}
Using Proposition~\ref{tightnessofprojections} and our
bounds on $\ldual X_s^N,\ind\rdual$ from 
Lemma~\ref{condn 2 of propn},
tightness of $\{\lquad M^N(\phi)\rquad\}_{N\geq 1}$ is immediate.

We now conclude through an application of Proposition~\ref{JS3.26}.
The probability that the support of $X^N$, which is compact, is 
simultaneously overlapped by two events of the SLFV is zero, and $\phi$
is bounded, so
$$\sup_{t \leq T} |Y^N_t - Y^N_{t-}| \leq \frac{C(d)K}{J M^d}, \quad
\IP-a.s.$$
\end{proof}
\begin{propn}
Under the conditions
of Theorem~\ref{result fixed radius},
$\{X^N\}_{N\geq 1}$ is tight and all limit points are continuous.
\end{propn}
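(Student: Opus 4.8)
The plan is to assemble the three ingredients established above and feed them into the tightness criterion of Corollary~\ref{limsup corollary}, upgrading to continuity via Corollary~\ref{corollaryforcontinuous}. First I would fix the separating class $\Phi = C_0^\infty(\IR^d)$, which is closed under addition and separates the finite measures on $\IR^d$ in the weak topology; since $C_0^\infty(\IR^d) \subseteq C_0^3(\IR^d)$, every $\phi \in \Phi$ is admissible in each of the preceding lemmas, so all of their conclusions hold verbatim for $\phi \in \Phi$.

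Next I would verify the three conditions of Proposition~\ref{3.1propn} in the modified form permitted by Corollary~\ref{limsup corollary}. Condition~$1'$ is precisely the content of Lemma~\ref{condn 1 of propn fixed radius case}, and Condition~$2'$ follows from part~2 of Lemma~\ref{condn 2 of propn} (in fact that lemma establishes the stronger statement with $\sup_N$ in place of $\limsup_N$). Tightness of the one-dimensional projections $\{\ldual X_\cdot^N, \phi\rdual\}_{N\geq 1}$, which is Condition~\ref{cond 3 propn 3.1}, is exactly Lemma~\ref{lemma for projections}. By Corollary~\ref{limsup corollary} these three verifications recover the original Conditions~\ref{cond 1 propn 3.1} and~\ref{cond 2 propn 3.1} of Proposition~\ref{3.1propn} (the compact sets being reconstructed as in the proof of that corollary, using that $X_t^N$ has compact support for each finite $N$), so that $\{X^N\}_{N\geq 1}$ is tight.

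Finally, to obtain continuity of all limit points I would invoke Corollary~\ref{corollaryforcontinuous} with the same class $\Phi = C_0^\infty(\IR^d)$. Its remaining hypothesis, namely that for each $\phi \in \Phi$ every limit point of $\{\ldual X_\cdot^N, \phi\rdual\}_{N\geq 1}$ be supported on the space of continuous paths, is exactly the second conclusion of Lemma~\ref{lemma for projections}; recall that this was obtained from the almost-sure jump bound $\sup_{t\leq T}|Y_t^N - Y_{t-}^N| \leq C(d)K/(JM^d)$ combined with Proposition~\ref{JS3.26}, noting that $K/(JM^d)\to 0$ under the conditions of Theorem~\ref{result fixed radius}. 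The stated conclusion then follows immediately.

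The proposition is thus a corollary of the work already done, and there is no substantive new estimate to prove; all of the analytic difficulty lives in the lemmas, in particular in the control of the spatial-motion term $\ldual X_s^N, \cA^N(\phi)\rdual$, the error $\zeta_t^N(\phi)$, and the quadratic variation of the martingale part. The only point demanding care here is bookkeeping: one must check that the class $C_0^\infty(\IR^d)$ demanded by Corollary~\ref{corollaryforcontinuous} is compatible with the lemmas (which are stated for the larger class $C_0^3$), and that the $\limsup$-form Conditions~$1'$ and~$2'$ of Corollary~\ref{limsup corollary} genuinely return the original hypotheses of Proposition~\ref{3.1propn} before Corollary~\ref{corollaryforcontinuous} is applied.
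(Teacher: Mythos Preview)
Your proposal is correct and takes essentially the same approach as the paper: the paper's own proof consists of the single sentence ``This is now immediate from Corollary~\ref{corollaryforcontinuous},'' and your write-up simply unpacks that sentence by naming the three lemmas (Lemmas~\ref{condn 2 of propn}, \ref{condn 1 of propn fixed radius case}, and~\ref{lemma for projections}) that feed into it. Your additional bookkeeping remarks about $C_0^\infty \subseteq C_0^3$ and about Corollary~\ref{limsup corollary} restoring the original hypotheses of Proposition~\ref{3.1propn} are accurate and harmless elaborations of what the paper leaves implicit.
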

\begin{proof}
This is now immediate from Corollary~\ref{corollaryforcontinuous}.
\end{proof}

\subsubsection{Tightness of projections: variable radius case}

Our proof in the fixed radius case breaks down in the variable 
radius setting since the quadratic 
variation of the martingale part of $\ldual X_\cdot^N,\phi\rdual$ will
grow without bound as $N\to\infty$. Instead we exploit the fact that
the $(1+\theta)$th moments of $\sup_{0\leq t\leq T}\ldual X_t^N,\phi\rdual$
will remain bounded for any $\theta <\beta$. 
We follow \cite{dawson:1993} who used essentially the same argument  
to show that branching Brownian motion with stable 
branching law, suitably rescaled, converges to superBrownian motion
with stable branching, although
there are some extra layers of estimation in our setting.

We exploit the following elementary lemma, which we learned from a preliminary
version of \cite{dawson:1993}, but we record here since it does not appear in the 
published version.

\begin{lemma} \label{lemmaDawsonabcde}
There exist $c_1, c_2, c_3\in (0,\infty)$ such that
\begin{enumerate}
\item \label{lemmaDawsonconda}
$1 - x + \frac{x^2}{2} - \exp(-x) \geq 0$, for $x \geq 0$.
\item \label{lemmaDawsoncondb}
$\frac{1}{x} \left(1 - x + \frac{x^2}{2} - \exp(-x)\right) \geq c_1 > 0$ for $x \geq 2$.
\item \label{lemmaDawsoncondc}
$0 \leq \exp(-x) +x - 1 \leq c_2 x^{1 + \beta}$, for $x \geq 0$ where $0 < \beta \leq 1$.
\item \label{lemmaDawsoncondd}
For a non-negative random variable $Y$ and 
$\theta \neq -1$, 
$$\IE[ Y^{1 + \theta}] \leq 2 + 
(1 + \theta) \int_1^\infty y^\theta \IP[Y \geq y] \dd y.$$
\item \label{lemmaDawsonconde}
For a non-negative random variable $Y$ and any $y \geq 1$, 
$$\IP[ Y \geq y] \leq c_3 y\int_0^{\frac{2}{y}} 
\IE[ \exp(-\lambda Y) - 1 + \lambda Y ] \dd \lambda .$$
\end{enumerate}
\end{lemma}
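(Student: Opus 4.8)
The plan is to dispatch the analytic inequalities in parts~\ref{lemmaDawsonconda}--\ref{lemmaDawsoncondc} by elementary calculus, the moment bound in part~\ref{lemmaDawsoncondd} by the layer-cake formula, and the tail estimate in part~\ref{lemmaDawsonconde} by combining the first two parts with a Fubini computation; only the last requires any insight. For part~\ref{lemmaDawsonconda}, set $F(x):=1-x+\tfrac{x^2}{2}-e^{-x}$; then $F(0)=F'(0)=0$ and $F''(x)=1-e^{-x}\geq 0$ on $[0,\infty)$, so $F'\geq 0$ and hence $F\geq 0$. For part~\ref{lemmaDawsoncondb}, $F$ is strictly positive on $(0,\infty)$ while $F(x)/x\sim x/2\to\infty$; since $x\mapsto F(x)/x$ is continuous and strictly positive on $[2,\infty)$ with limit $+\infty$, its infimum there is a strictly positive constant $c_1$. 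The lower bound in part~\ref{lemmaDawsoncondc} follows from the same monotonicity argument applied to $g(x):=e^{-x}+x-1$ (with $g(0)=0$ and $g'(x)=1-e^{-x}\geq 0$), and for the upper bound one checks that $g(x)/x^{1+\beta}$ is continuous on $(0,\infty)$ with $g(x)/x^{1+\beta}\sim\tfrac12 x^{1-\beta}\to 0$ as $x\downarrow 0$ (using $\beta\leq 1$) and $\sim x^{-\beta}\to 0$ as $x\to\infty$, hence is bounded by some $c_2$.

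For part~\ref{lemmaDawsoncondd} I would start from the layer-cake representation $\IE[Y^{1+\theta}]=(1+\theta)\int_0^\infty y^\theta\IP[Y\geq y]\dd y$, valid whenever $1+\theta>0$, split the integral at $y=1$, and bound the contribution of $(0,1)$ using $\IP[Y\geq y]\leq 1$ together with $(1+\theta)\int_0^1 y^\theta\dd y=1\leq 2$; the contribution of $(1,\infty)$ is exactly the stated integral.

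The substance is part~\ref{lemmaDawsonconde}, which rests on the exact identity that, for $a>0$ and $Y>0$, the change of variables $u=\lambda Y$ gives
\[
\int_0^a\big(e^{-\lambda Y}-1+\lambda Y\big)\dd\lambda=\frac{1}{Y}\int_0^{aY}\big(e^{-u}-1+u\big)\dd u=\frac{1}{Y}F(aY),
\]
with $F$ the very function of parts~\ref{lemmaDawsonconda}--\ref{lemmaDawsoncondb}. Taking $a=2/y$ and applying Fubini (the integrand is nonnegative by part~\ref{lemmaDawsoncondc}),
\[
\int_0^{2/y}\IE\big[e^{-\lambda Y}-1+\lambda Y\big]\dd\lambda=\IE\Big[\tfrac{1}{Y}F\big(\tfrac{2}{y}Y\big)\Big].
\]
On the event $\{Y\geq y\}$ one has $\tfrac{2}{y}Y\geq 2$, so part~\ref{lemmaDawsoncondb} gives $F(\tfrac{2}{y}Y)\geq c_1\tfrac{2}{y}Y$ and hence $\tfrac{1}{Y}F(\tfrac{2}{y}Y)\geq\tfrac{2c_1}{y}$ there, while elsewhere the integrand is nonnegative by part~\ref{lemmaDawsonconda}. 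Restricting the expectation to $\{Y\geq y\}$ yields $\IE[\,\cdots\,]\geq\tfrac{2c_1}{y}\IP[Y\geq y]$, i.e.\ part~\ref{lemmaDawsonconde} with $c_3=1/(2c_1)$.

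The only non-mechanical step --- and the reason parts~\ref{lemmaDawsonconda} and~\ref{lemmaDawsoncondb} are recorded first --- is recognising that the $\lambda$-integral of the Laplace-type quantity collapses to $F$; once this is seen, part~\ref{lemmaDawsonconde} is immediate. I would take care only over the degenerate case $Y=0$ (where the integrand vanishes and the identity holds by continuity, or by treating $\{Y=0\}$ separately) and over the admissible range of $\theta$ in part~\ref{lemmaDawsoncondd}, where $1+\theta>0$ is precisely what the layer-cake formula requires.
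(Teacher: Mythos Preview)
Your proof is correct and follows essentially the same route as the paper's: parts~\ref{lemmaDawsonconda}--\ref{lemmaDawsoncondc} are elementary (the paper omits them), part~\ref{lemmaDawsoncondd} is the layer-cake formula (the paper derives it via $x^{1+\theta}=1+(1+\theta)\int_1^x y^\theta\dd y$, which is the same idea), and part~\ref{lemmaDawsonconde} is exactly the Fubini computation you give, reducing the $\lambda$-integral to $F(2Y/y)/Y$ and then invoking parts~\ref{lemmaDawsonconda} and~\ref{lemmaDawsoncondb}. One minor slip: in part~\ref{lemmaDawsoncondc} for $\beta=1$ you have $g(x)/x^2\to\tfrac12$ rather than $0$ as $x\downarrow 0$, but this still gives boundedness, so the conclusion stands; and as you note, part~\ref{lemmaDawsoncondd} as stated really wants $1+\theta>0$, which is all the paper ever uses.
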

\begin{proof}(sketch)
For \ref{lemmaDawsoncondd}, note that
\begin{align*}
x^{1+\theta} = 1 + (1 + \theta) \int_1^x y^\theta \dd y
,
\end{align*}
and so
\begin{align*}
\IE[ Y^{1 + \theta} ] 
&
\leq 1+ \int_1^\infty \left(  1 + (1 + \theta) \int_1^x y^\theta \dd y \right) \mathrm{d}\IP[Y \leq x]
\\
&
\leq 2 + \int_1^\infty  (1 +\theta) \int_1^x y^\theta \dd y \; \mathrm{d}\IP[Y \leq x]
\\
&
= 2 + (1 +\theta)\int_1^\infty y^\theta \IP[Y \geq y] \dd y
,
\end{align*}
where the last line follows from reversing the order of integration.

For \ref{lemmaDawsonconde}, observe that
\begin{align*}
y\int_0^{\frac{2}{y}} \IE[ \exp(-\lambda Y) - 1 + \lambda Y ] \dd \lambda
& =
\int_0^\infty y \int_0^{\frac{2}{y}} \Big(   \exp(-\lambda x) - 1 + \lambda x\Big) 
\dd \lambda \dd \IP[Y \leq x] 
\\
& =
\int_0^\infty \frac{y}{x} \left( 1 - \frac{2x}{y} + \frac{1}{2} \left(\frac{2x}{y}\right)^2 - \exp\left(- \frac{2x}{y}\right)
\right)  \mathrm{d} \IP[Y \leq x]
\\
& \geq 
\int_y^\infty \frac{y}{x} \left( 1 - \frac{2x}{y} + \frac{1}{2} \left(\frac{2x}{y}\right)^2 - \exp\left(- \frac{2x}{y}\right)
\right)  \mathrm{d} \IP[Y \leq x]
\\
& \geq 2 c_1 \IP[Y \geq y]
,
\end{align*}
where we have used \ref{lemmaDawsonconda} and \ref{lemmaDawsoncondb}.
\end{proof}

In the classical setting of Dawson, at this stage one exploits the fact 
that the approximating processes are branching processes, in exact duality with
the solution to a deterministic evolution equation. This is no longer true
in our setting, so instead our first task is to write down an {\em approximate}
evolution equation, whose solution we denote by $v_\phi^N$, with the property that 
\begin{align*}
\IE[\exp( -\ldual X^N_t, \phi \rdual ] = 
\exp(- \ldual X^N_0, v_\phi^N(t,\cdot) \rdual)
+ \cO(\varepsilon^N) ||\phi||_\infty^2\ldual X_0^N,\ind\rdual
,
\end{align*}
where $\varepsilon^N \to 0$ as $N \to \infty$.

From our calculations in Section~\ref{variableradiuslimiting} (in particular
equations~(\ref{estimate for A} and~(\ref{Bintroduction})), and assuming
the result of Lemma~\ref{independence requirement variable radius}, we can
write
\begin{equation}
\label{approximate laplace transform}
\cL_{\exp(- \cdot)}(\phi)(X_0^N)
=\exp\left(-\ldual X^N_0, \phi \rdual\right)
\Bigg[
\ldual X^N_0, -L^N\phi + B^N\phi \rdual + \cO(\widetilde{\varepsilon}^N) ||\phi||_\infty^2\ldual X_0^N,\ind\rdual\Bigg]
\end{equation}
with $\widetilde{\varepsilon}^N\to 0$, where
$$L^N\phi(x)=
\frac{NM^d}{J} 
\left[\int_0^\infty \frac{1}{|\cB_r^M|} \int_{\cB_r^M(x)} \int_{\cB_r^M(z)} u(r)\left(\phi(y) - \phi(x) \right) \dd y \dd z \; \mu^N(\mathrm{d}r)
\right]
$$
and 
\begin{align*}
B^N\phi(x) := \frac{NM^d}{K(\gamma - d)}
\left( \frac{K|\cB_1|}{JM^d} \right)^{\frac{\alpha + 1}{\gamma - d}}
\int_{\frac{K|\cB_1|}{JM^d}}^{J^{\frac{\gamma - d}{\gamma}}\frac{K|\cB_1|}{JM^d}}
g(\upsilon \phi(x)) \upsilon^{- \left(\frac{\alpha + 1}{\gamma - d} + 1 \right)}
\dd \upsilon
,
\end{align*}
Now let us define $v^N_\phi(t,x)$ by
\begin{align}
\label{approximate evolution}
\begin{cases}
\frac{\partial v^N_\phi(t,x)}{\partial t} & = L^Nv^N_\phi(t,\cdot)(x) - 
B^Nv^N_\phi(t,\cdot)(x)
,
\\
v^N_\phi(0,x) & = \phi(x).
\end{cases}
\end{align}
This is the evolution equation corresponding to a superprocess in which the 
spatial motion is the compound Poisson process with generator $L^N$
and the branching mechanism is determined by $B^N$. The existence of such a 
process (which then, by duality, guarantees the uniqueness of the solution
of the evolution equation) follows from, for example, \cite{el-karoui/roelly:1991}.

Let us write $\{S_t^N\}_{t\geq 0}$ for the semigroup generated by $L^N$. 
Then the solution to~(\ref{approximate evolution}) can be written
\begin{align}\label{vcharacterisationintermsofs}
v^N_\phi(t,x) := S^N_t(\phi)(x) - \int_0^t S^N_{t-s} 
\left( B^Nv^N_\phi(s,\cdot)\right)(x) \dd s
.
\end{align}
An immediate consequence of~(\ref{approximate laplace transform}) is 
that we have an approximate duality between 
$X^N$ and $v_{\phi}^N$ since
\begin{align*}
\left|\cL_{\exp(- \cdot)}(v_\phi^N(t-s,\cdot)(X_s^N)\right| &\leq
\cO(\varepsilon^N) ||v_\phi^N(t-s,\cdot)||_\infty^2\ldual X_s^N,\ind\rdual
\exp\left(-\ldual X^N_s, v_\phi^N(t-s,\cdot) \rdual\right)\\
& \leq
\cO(\varepsilon^N) ||v_\phi^N(t-s,\cdot)||_\infty^2\ldual X_s^N,\ind\rdual,
\end{align*}
and so integrating over $[0,t]$,
\begin{eqnarray}
\nonumber
\IE\big[
\exp\left(-\ldual X^N_t, \phi \rdual\right)\big]
&=&
\exp\left(-\ldual X^N_0, v_\phi^N(t,\cdot) \rdual\right) +
\cO(\varepsilon^N) \int_0^t\|v^N_\phi(t-s,\cdot)\|_\infty^2\dd s
\IE[\ldual X_0^N,\ind\rdual]
\\
\label{approximate duality relation}
&=&
\exp\left(-\ldual X^N_0, v_\phi^N(t,\cdot) \rdual\right) +
\cO(\varepsilon^N) ||\phi||_\infty^2\ldual X_0^N,\ind\rdual,
\end{eqnarray}
where we used Lemma~\ref{condn 2 of propn}
to replace $\IE [ \int_0^t\ldual X_s^N,\ind\rdual\dd s ]$ by 
$\ldual X_0^N,\ind\rdual$
at the expense of replacing $\widetilde{\varepsilon}^N$ by
$\varepsilon^N$, but still with $\varepsilon^N\to 0$ as $N\to\infty$, and 
we used~(\ref{vcharacterisationintermsofs}) 
to see that $\|v_\phi^N(t-s)\|_\infty \leq\|\phi\|_\infty$.
In particular, applying this with $\lambda\phi$ in place of $\phi$ 
and differentiating with
respect to $\lambda$ at $\lambda=0$ gives
\begin{align*}
\IE[ \ldual X^N_t, \phi\rdual] = \ldual X^N_0, S^N_t(\phi) \rdual
.
\end{align*}

The key step in proving tightness is the following:
\begin{lemma} \label{supboundthetamomentlemma}
For $\{X^N\}_{N\geq 1}$ satisfying the conditions of Theorem~\ref{jonoinfinitelimit} and 
$\phi \in C^\infty_0$ we will have that, 
for any fixed $0 < \theta < \beta$,
\begin{align*}
\IE[ \sup_{s \leq T} \ldual X^N_s, \phi \rdual^{1 + \theta} ] \leq H
,
\end{align*}
where $H$ is independent of $N$.
\end{lemma}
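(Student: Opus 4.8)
The plan is to reduce the statement to the single test function $\ind$ and then run the exponential duality argument in the spirit of \cite{dawson:1993}. Since $|\ldual X_s^N,\phi\rdual|\le\|\phi\|_\infty\ldual X_s^N,\ind\rdual$, it suffices to prove $\IE[\sup_{s\le T}\ldual X_s^N,\ind\rdual^{1+\theta}]\le H$ uniformly in $N$. The decisive advantage of the constant function is that the spatial operator $L^N$ annihilates constants, so the first-moment semigroup $S^N$ (recall $\IE[\ldual X_t^N,\phi\rdual]=\ldual X_0^N,S_t^N\phi\rdual$) leaves constants invariant; this is exactly what lets the sup be reconciled with the exponential martingale below. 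Throughout I would approximate $\ind$ by the cutoffs $h^R\uparrow\ind$ of Lemma~\ref{condn 2 of propn} (for which $\|\Delta h^R\|_\infty$ is small, $\mathrm{supp}(\Delta h^R)\subseteq A_R:=B_{2R}(0)\setminus B_R(0)$, and the $C^3$ norms are uniform), passing to $R\to\infty$ by monotone convergence only at the very end.

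\textbf{Fixed-time Laplace estimate.} First I would show that for $0\le\lambda\le1$ and $t\le T$,
\[
\IE\big[e^{-\lambda\ldual X_t^N,h^R\rdual}-1+\lambda\ldual X_t^N,h^R\rdual\big]\le C\lambda^{1+\beta},
\]
uniformly in $N$ and $R$. Writing $a=\ldual X_0^N,v_{\lambda h^R}^N(t,\cdot)\rdual$, the approximate duality~(\ref{approximate duality relation}) together with $\IE[\ldual X_t^N,h^R\rdual]=\ldual X_0^N,S_t^Nh^R\rdual$ and the representation~(\ref{vcharacterisationintermsofs}) rewrites the left-hand side as $(e^{-a}-1+a)+\ldual X_0^N,\int_0^tS_{t-s}^N(B^Nv_{\lambda h^R}^N(s,\cdot))\,ds\rdual$ up to an error $\cO(\varepsilon^N)\lambda^2$. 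The first bracket is at most $a^2/2\le C\lambda^2\le C\lambda^{1+\beta}$ since $\beta<1$; for the second I would use $0\le v_{\lambda h^R}^N(s,\cdot)\le S_s^N(\lambda h^R)\le\lambda$ together with the uniform bound $B^N\psi\le C\psi^{1+\beta}$ (which follows from the explicit form of $B^N$ and the convergence of $\int g(\upsilon\psi)\upsilon^{-(\beta+2)}\,d\upsilon$ established in Section~\ref{variableradiuslimiting}, the condition $\alpha+1=(\beta+1)(\gamma-d)$ giving the right exponent and Condition~\ref{infvarcond2} bounding the prefactor), and the semigroup identity $S_{t-s}^NS_s^N=S_t^N$, to bound it by $Ct\lambda^{1+\beta}\ldual X_0^N,\ind\rdual$. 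As $\sup_N\ldual X_0^N,\ind\rdual<\infty$, the claim follows.

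\textbf{Submartingale and tail bound.} The key step, and the main obstacle, is to upgrade this to a bound on the supremum. The functional $g(\lambda\ldual X_s^N,\phi\rdual)$ is \emph{not} a submartingale for general $\phi$, because its drift contains the sign-indefinite spatial term $\ldual X_s^N,\tfrac{m(N)}2\Delta\phi\rdual(1-e^{-\lambda\ldual X_s^N,\phi\rdual})$; bounding this crudely forces either a second moment of the mass (which is infinite here) or a non-integrable $y^{-1}$ tail. The remedy is to run the test function backward under the \emph{exact} semigroup: set $\Psi_s:=S_{T-s}^N(\lambda h^R)$, so $\dot\Psi_s=-L^N\Psi_s$, and $\tilde G_s:=e^{-\ldual X_s^N,\Psi_s\rdual}-1+\ldual X_s^N,\Psi_s\rdual\ge0$. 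Because the first-moment drift operator and the spatial part of $\cL_{\exp(-\cdot)}$ are both $L^N$, the spatial contributions cancel \emph{exactly} and the drift of $\tilde G_s$ is
\[
e^{-\ldual X_s^N,\Psi_s\rdual}\ldual X_s^N,B^N\Psi_s\rdual+\cO(\varepsilon^N)\lambda^2\ldual X_s^N,\ind\rdual\ \ge\ -\cO(\varepsilon^N)\lambda^2\ldual X_s^N,\ind\rdual,
\]
the leftover error now carrying the crucial factor $\lambda^2$ (since $\|\Psi_s\|_\infty\le\lambda$). Compensating by $\int_0^s\cO(\varepsilon^N)\lambda^2\ldual X_r^N,\ind\rdual\,dr$ yields a genuine non-negative submartingale, so Doob's inequality, the fixed-time estimate at $t=T$, and Lemma~\ref{condn 2 of propn} give $\IP[\sup_{s\le T}\tilde G_s\ge\alpha]\le\alpha^{-1}(C\lambda^{1+\beta}+\cO(\varepsilon^N)\lambda^2)$. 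Since $L^N\ind=0$, the mismatch $\ldual X_s^N,\Psi_s-\lambda h^R\rdual$ is supported through $L^Nh^R$ near $A_R$ and is controlled, uniformly in $N$, by the compact-containment estimates in the proof of Lemma~\ref{condn 1 of propn fixed radius case}, vanishing as $R\to\infty$. Taking $\lambda=1/y$, $\alpha=g(1)=e^{-1}$, and letting $R\to\infty$ (so $\sup_{s\le T}\ldual X_s^N,h^R\rdual\uparrow\sup_{s\le T}\ldual X_s^N,\ind\rdual$), this gives
\[
\IP\Big[\sup_{s\le T}\ldual X_s^N,\ind\rdual\ge y\Big]\le e\big(C\,y^{-(1+\beta)}+\cO(\varepsilon^N)\,y^{-2}\big),\qquad y\ge1.
\]

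\textbf{Assembly.} Finally, with $Y=\sup_{s\le T}\ldual X_s^N,\ind\rdual$, inequality~(\ref{lemmaDawsoncondd}) of Lemma~\ref{lemmaDawsonabcde} gives
\[
\IE\big[Y^{1+\theta}\big]\le 2+(1+\theta)\int_1^\infty y^\theta\,\IP[Y\ge y]\,dy\le 2+(1+\theta)e\int_1^\infty\big(C\,y^{\theta-1-\beta}+\cO(\varepsilon^N)\,y^{\theta-2}\big)\,dy,
\]
and both integrals converge: the first because $\theta<\beta$, the second because $\theta<1$. As $\varepsilon^N$ stays bounded, this is a bound $H$ independent of $N$; reinstating the factor $\|\phi\|_\infty^{1+\theta}$ from the reduction completes the argument. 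I expect the delicate point to be Step three: obtaining the $\lambda^2$ (hence $y^{-2}$) decay of the error requires the exact backward semigroup rather than the Laplacian approximation $\cA^N$, and controlling the resulting test-function mismatch forces the reduction to the constant function, where $L^N$-invariance makes $S_{T-s}^Nh^R\approx h^R$; keeping the three error scales ($\varepsilon^N$, the annulus mass $\ldual X_r^N,\ind_{A_R}\rdual$, and their interaction with $\lambda=1/y$) simultaneously uniform in $N$ is the bookkeeping that the phrase ``extra layers of estimation'' refers to.
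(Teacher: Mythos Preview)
Your approach is genuinely different from the paper's, and the paper's route is considerably more direct. The paper first establishes a \emph{fixed-time} bound $\IE[\ldual X_t^N,\phi\rdual^{1+\theta}]\le H''$ uniformly in $t\le T$ (using exactly the Laplace--duality estimate of your Step~2 together with Lemma~\ref{lemmaDawsonabcde}(\ref{lemmaDawsoncondd},\ref{lemmaDawsonconde})), and then upgrades to the supremum via the \emph{linear} semimartingale decomposition $\ldual X_t^N,\phi\rdual=\ldual X_0^N,\phi\rdual+\int_0^t\cL^N(\phi)(X_s^N)\,ds+M_t^N(\phi)$: Jensen's inequality bounds the $(1+\theta)$-moment of the integral term by the fixed-time estimate, Minkowski's inequality (after rearranging the decomposition) then bounds $\IE[|M_T^N(\phi)|^{1+\theta}]$, and Doob's $L^{1+\theta}$ inequality handles $\sup_{t\le T}|M_t^N(\phi)|$. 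No reduction to $\phi=\ind$, no backward semigroup, and no submartingale structure on $g(\lambda\ldual X_s^N,\cdot\rdual)$ are needed.

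Your route --- a direct tail bound on $\sup_{s\le T}\ldual X_s^N,\ind\rdual$ from a submartingale inequality for $\tilde G_s$ --- is attractive, but Step~3 as written has a gap. You claim the drift error is bounded \emph{pointwise} by $\cO(\varepsilon^N)\lambda^2\ldual X_s^N,\ind\rdual$, yet the $\varepsilon^N\to 0$ in~(\ref{approximate laplace transform}) holds only after integrating in time and taking expectation (it relies on Lemma~\ref{independence requirement variable radius}). The crude pointwise bound on the ``square'' error, obtained from $X_s^N\le K$ inside $\big(\int_{\cB_r^M(x)}\Psi_s X_s^N\big)^2$, carries a factor $\tfrac{KN}{J^2M^d}\int u(r)^2r^{2d}\mu^N(dr)$, and the discussion just after~(\ref{integral of rates 1}) shows this \emph{diverges} as $N\to\infty$. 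So compensating by a pointwise upper bound does not give a submartingale whose terminal expectation is uniformly small in $N$. One could try to compensate instead by the actual random error and bound only its expectation at time $T$ via Lemma~\ref{independence requirement variable radius}, but then you must track the signs of the several Taylor-remainder terms in Section~\ref{variableradiuslimiting} separately to ensure the compensated process is a genuine submartingale --- bookkeeping you have not carried out. The paper avoids all of this because it never needs a sign on the error: working in $L^{1+\theta}$, every term in the decomposition can be estimated in absolute value using the fixed-time moment bound already established.
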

\begin{proof}
In the proof of this lemma, $C$ is a constant which is independent of $N$,
but may change from line to line.
First observe that, using Lemma~\ref{lemmaDawsonabcde} 
parts~\ref{lemmaDawsoncondd} and~\ref{lemmaDawsonconde},
\begin{multline}
\IE[ \ldual X^N_t, \phi \rdual^{1 + \theta} ]
\leq
2 + C \int_1^\infty y^{1 + \theta} \int_0^{\frac{2}{y}} 
\IE[ \exp(- \ldual X^N_t, \lambda \phi \rdual) - 1 + \ldual X^N_t, 
\lambda \phi \rdual ] \dd \lambda \dd y
\\
=
2 + C \int_1^\infty y^{1 + \theta} \int_0^{\frac{2}{y}}\Big[ 
\exp(- \ldual X^N_0, v^N_{\lambda \phi}(t,\cdot) \rdual) - 1 +  
\ldual X^N_0, S^N_t(\lambda \phi) \rdual 
\\
 + 
\cO(\varepsilon^N)\|\lambda \phi\|_\infty^2t\ldual X_0^N,\ind\rdual
\Big] 
\dd \lambda \dd y
,
\label{theta moment bound}
\end{multline}
where we have used~(\ref{approximate duality relation} 
and~(\ref{bound on total mass}) and
the constant $C$ is determined by those in Lemma~\ref{lemmaDawsonabcde}.
Using Part~\ref{lemmaDawsoncondc} of Lemma~\ref{lemmaDawsonabcde},
\begin{multline*}
\exp(- \ldual X^N_0, v^N_{\lambda \phi}(t,\cdot) \rdual) - 1 +  
\ldual X^N_0, S^N_t(\lambda \phi) \rdual
\\
\leq
c_2 \ldual X^N_0, \lambda S^N_t(\phi) \rdual^{1 +\beta} 
+ \exp( - \ldual X^N_0, v^N_{\lambda \phi}(t,\cdot) \rdual ) - 
\exp(- \ldual X^N_0, S^N_t(\lambda \phi) \rdual
.
\end{multline*}
The first term is at most 
$c_2\|\lambda\phi\|_\infty^{1+\beta}\ldual X_0^N,\ind\rdual^{1+\beta}$.
To control the remaining terms, note that
\begin{align*}
\exp( - \ldual X^N_0, v^N_{\lambda \phi}(t,\cdot) \rdual ) - 
\exp(- \ldual X^N_0, S^N_t(\lambda \phi)(\cdot) & \rdual
\leq \ldual X^N_0, |v^N_{\lambda \phi}(t,\cdot ) - S^N_t(\lambda \phi)(\cdot) | \rdual
\\
& =
\ldual X^N_0, \Big| \int_0^t S^N_{t-s} 
\left( B^Nv^N_{\lambda \phi}(s,\cdot) \right) \dd s \Big| \rdual
,
\end{align*}
(where we have used \eqref{vcharacterisationintermsofs}).
Now for non-negative $\phi$, under the assumptions of 
Theorem~\ref{jonoinfinitelimit}, using~(\ref{Bintroduction}),
\begin{align*}
B^N\phi(x)
\leq & 
\frac{NM^d}{K(\gamma - d)} \left(\frac{K \phi(x)}{JM^d}\right)^{\frac{\alpha +1}{\gamma - d}} \int_0^\infty
\left(\exp(-v) + v - 1 \right) v^{-\frac{\alpha + 1}{\gamma - d} - 1} \dd v
\\
\leq &
C \phi(x)^{1 +\beta}
,
\end{align*}
so that using~(\ref{vcharacterisationintermsofs}) again,
\begin{align*}
\ldual X_0^N, \Big|\int_0^t S^N_{t-s} 
\left( B^Nv^N_{\lambda \phi}(s,\cdot) \right) \dd s \Big|\rdual
& \leq 
C \ldual X_0^N,\int_0^t S^N_{t-s} 
\left(v^N_{\lambda\phi}(s,\cdot)^{1 + \beta} \right) \dd s\rdual
\\
& \leq 
C \ldual X_0^N, \int_0^t S^N_{t-s} 
\left(S^N_s(\lambda\phi)^{1 + \beta} \right) \dd s\rdual
\\
& \leq 
C ||\phi||_\infty^{1+\beta} \lambda^{1 + \beta} t \ldual X_0^N,\ind\rdual 
.
\end{align*}
Substituting in~(\ref{theta moment bound}),
\begin{multline*}
\IE[ \ldual X^N_t, \phi \rdual^{1 + \theta} ]
\leq
2 + C \int_1^\infty y^{1 + \theta} \int_0^{\frac{2}{y}}
\Big[||\phi||_\infty^{1+\beta} \lambda^{1 + \beta} 
\Big( \ldual X_0^N,\ind\rdual^{1+\beta}+t\ldual X_0^N, \ind\rdual\Big) 
\\+\cO(\varepsilon^N)\|\lambda \phi\|_\infty^2t\ldual X_0^N,\ind\rdual
\Big]\dd \lambda \dd y
,
\end{multline*}
and since $\theta<\beta$ the right hand side is finite.

An application of the Monotone Convergence Theorem yields
\begin{align}
\label{bound to use in semimart decomp}
\IE[ \ldual X^N_t, \ind \rdual ^{1 +\beta} ] \leq H'
,
\end{align}
with $H'<\infty$ independent of $t\leq T$.

We have now established a bound of the form
\begin{align*}
\IE[ \ldual X^N_t, \phi \rdual^{1 + \theta} ]
\leq 
2 + C \int_1^\infty y^{1+\theta} \int_0^{\frac{2}{y}} \lambda^{1 +\beta} + 
\lambda^2 \dd \lambda 
\dd y \left( \ldual X^N_0, \ind \rdual^{1 +\beta} + \ldual X^N_0, \ind  \rdual \right)
\leq H''
.
\end{align*}
The proof now follows the pattern established in 
Lemmas~\ref{condn 1 of propn fixed radius case}
and~\ref{lemma for projections}.
Taking the terms in
the semimartingale decomposition of $\ldual X^N_t,\phi\rdual$ one by one,
first observe that by Jensen's inequality, 
$$\IE\left[\left(\frac{1}{t}\int_0^t \ldual X^N_s, |\cA^N(\phi)|\rdual\dd s
\right)^{1+\theta}\right]\leq \IE\left[\frac{1}{t}\int_0^t
\ldual X_s^N, |\cA^N\phi| \rdual^{1+\theta}\dd s\right],$$
which we can bound using~(\ref{bound to use in semimart decomp}) and
the fact that, from Lemma~\ref{uniform lipschitz},
$|A^N\phi|\leq m(N)\|\phi\|_{C^3}$. 
Rearranging the semimartingale decomposition to give an expression 
for $M_t^N(\phi)$ and
combining with Minkowski's inequality we can bound
$\IE[|M^N_T(\phi)|^{1+\theta}]$, and then by Doob's martingale
inequality
we see that
\begin{multline*}
\IE[ \sup_{s \leq T} \ldual X^N_s, \phi \rdual^{1 + \theta} ]
\\
 \leq
C\left(\IE[ \ldual X_0^N , \phi \rdual ^{1+\theta} ]
+
\IE\left[
\left(\int_0^T |\cL^N (\phi_s)(X_s^N)| ds \right)^{1 +\theta} \right]
 + \IE[ 
|M^N_T(\phi)^{1 + \theta}| ]
\right),
\end{multline*}
and, since we have established uniform bounds on each of the terms on 
the right hand side of this expression, this completes the proof.
\end{proof}

To conclude the proof of tightness of $\{X^N\}_{N\geq 1}$
we shall use a criterion due to Aldous. 
\begin{thm}[\cite{aldous:1978}]
A sequence of c\`adl\`ag real-valued processes $\{Y^N\}_{N\geq 1}$ 
is tight if and only if the following two conditions hold:
\begin{enumerate}
\item
for each fixed $t$, $\{Y^N_t\}_{N\geq 1}$ is tight in $\IR$;
\item
for any $\varepsilon>0$, 
given a sequence of stopping times $\tau_N$ bounded above by $T$ and 
a sequence of real numbers $\delta_N \to 0$ as $N \to \infty$, 
\begin{align*}
\lim_{N \to \infty} \IP[|Y^N_{\tau_N + \delta_N} - Y^N_{\tau_N} |>\varepsilon ] = 0
.
\end{align*}
\end{enumerate}
\end{thm}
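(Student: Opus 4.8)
The plan is to recognise this as the classical tightness criterion of \cite{aldous:1978} and to prove it by reducing tightness in the Skorokhod space to the two stated conditions via Prokhorov's theorem and the compactness characterisation of $D([0,T],\IR)$; it suffices to treat each finite horizon $[0,T]$ separately. Recall that, by Prokhorov's theorem, $\{Y^N\}_{N\geq 1}$ is tight in $D([0,T],\IR)$ if and only if (a) (compact containment) for each $\eta>0$ there is $\Gamma$ with $\sup_N \IP[\sup_{t\leq T}|Y^N_t|>\Gamma]<\eta$, and (b) (modulus control) for each $\eta,\varepsilon>0$ there is $\delta>0$ with $\sup_N \IP[w'(Y^N,\delta,T)\geq\varepsilon]<\eta$, where $w'(x,\delta,T)=\inf\max_i\sup_{s,t\in[t_{i-1},t_i)}|x(s)-x(t)|$ is the Skorokhod modulus, the infimum being over partitions $0=t_0<\cdots<t_k=T$ with $\min_i(t_i-t_{i-1})>\delta$. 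The entire content of the theorem is the equivalence of (a)+(b) with Conditions~1 and~2.

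For necessity I would show tightness implies both conditions. Condition~1 is immediate: tightness yields the compact containment bound $\sup_N\IP[\sup_{t\leq T}|Y^N_t|>\Gamma]<\eta$, and since $\{|Y^N_t|>\Gamma\}\subseteq\{\sup_{s\leq T}|Y^N_s|>\Gamma\}$ this gives marginal tightness at every fixed $t$. Condition~2 follows from (b): given stopping times $\tau_N\leq T$ and $\delta_N\to0$, on the event $\{w'(Y^N,\delta,T)<\varepsilon\}$ with $\delta_N<\delta$ the times $\tau_N$ and $\tau_N+\delta_N$ lie in the same or in adjacent blocks of a near-optimal partition, so their increment is controlled by $w'$ except across a single partition point; handling that boundary term using the right-continuity of paths and letting $\delta_N\to0$ forces $\IP[|Y^N_{\tau_N+\delta_N}-Y^N_{\tau_N}|>\varepsilon]\to0$.

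The substantive direction is sufficiency. First I would upgrade Condition~2, stated for sequences $(\tau_N,\delta_N)$, to the uniform form
\[
\lim_{\delta\to0}\ \limsup_{N\to\infty}\ \sup_{\tau\in\cT_T,\,h\leq\delta}\ \IP\big[|Y^N_{\tau+h}-Y^N_{\tau}|>\varepsilon\big]=0,
\]
where $\cT_T$ is the set of stopping times bounded by $T$; the equivalence of the two formulations is a standard contradiction argument, since failure of the uniform form would let a diagonal choice of $\tau_N$ and $h_N\leq\delta_N\to0$ violate Condition~2. The heart of the proof is then to convert this increment control into control of $w'$. Fixing $\varepsilon>0$, I would build a random partition from the stopping times $\tau_0^N=0$ and $\tau_{k+1}^N=\inf\{t>\tau_k^N:|Y^N_t-Y^N_{\tau_k^N}|\geq\varepsilon\}\wedge T$, so that on each $[\tau_k^N,\tau_{k+1}^N)$ the oscillation is at most $2\varepsilon$. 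The uniform increment bound shows $\IP[\tau_{k+1}^N-\tau_k^N\leq\delta]$ is small, whence with high probability these times are $\delta$-separated and there are at most $\lceil T/\delta\rceil+1$ of them before $T$; this yields $w'(Y^N,\delta,T)\leq2\varepsilon$ off a set of small probability, giving (b). Finally, Condition~1 at $t=0$ together with this bound on the number of $\varepsilon$-sized oscillation intervals controls $\sup_{t\leq T}|Y^N_t|$ in probability, giving (a), and Prokhorov's theorem delivers tightness.

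The main obstacle is the random-partition step: the event $\{\tau_{k+1}^N-\tau_k^N\leq\delta\}$ does \emph{not} coincide with a single increment $|Y^N_{\tau_k^N+\delta}-Y^N_{\tau_k^N}|\geq\varepsilon$, because a path may cross the level $\varepsilon$ and return within $[\tau_k^N,\tau_k^N+\delta]$, and one must union-bound over the \emph{random} number of intervals. I would close this by applying the uniform bound at the stopping time $\tau_k^N$ to the increment over the stopping time $(\tau_{k+1}^N)\wedge(\tau_k^N+\delta)$ and controlling the expected number of short intervals, following the scheme of \cite{jacodshiryaev2003limittheorems}, Chapter~VI; ensuring that the union bound closes uniformly in $N$ is the delicate point.
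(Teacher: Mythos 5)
The paper does not prove this statement at all: it is quoted from \cite{aldous:1978}, and it is invoked only in one direction, namely that Conditions~1 and~2 imply tightness (this is all that is needed to conclude tightness in the variable radius case). So the comparison here is between your proposal and the literature, and your proposal has a genuine, unrepairable gap in the necessity direction. The ``only if'' implication is in fact false, so no treatment of the ``boundary term'' you flag can succeed. Concretely, take $Y^N_t = \ind_{\{t \geq \frac12 + \frac1N\}}$ for $t \in [0,1]$, a sequence of deterministic elements of $D([0,1],\IR)$. It converges in the Skorokhod topology to $\ind_{\{t \geq \frac12\}}$, so the sequence is tight, and Condition~1 holds trivially; but with the deterministic (hence stopping) times $\tau_N = \frac12$ and $\delta_N = \frac{2}{N} \to 0$ one has $|Y^N_{\tau_N + \delta_N} - Y^N_{\tau_N}| = 1$ for every $N$, so Condition~2 fails. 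The failure is exactly at the step you identified and hoped to fix with right-continuity: the modulus $w'$ controls oscillation \emph{within} blocks but permits arbitrarily large jumps \emph{at} partition points, and a sequence of stopping times can straddle such a jump whenever its location is asymptotically predictable. Aldous's result is a sufficiency criterion only; a converse holds under extra hypotheses (quasi-left-continuity of the limit points, see \cite{jacodshiryaev2003limittheorems}, Chapter~VI), and the ``if and only if'' in the paper's statement is an overstatement of the cited theorem --- harmless for the paper, since only the sufficiency half is ever used.

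Your sufficiency argument, by contrast, is essentially the standard proof (upgrade Condition~2 to its uniform form by a diagonal contradiction, introduce the oscillation stopping times $\tau^N_{k+1} = \inf\{t > \tau^N_k : |Y^N_t - Y^N_{\tau^N_k}| \geq \varepsilon\} \wedge T$, and show the short-interval events have small probability uniformly in $N$), and it is sound in outline, with one detail to repair: for compact containment you should apply Condition~1 at a deterministic grid $0 = s_0 < s_1 < \cdots < s_m = T$ of mesh smaller than $\delta$, since every block of a partition witnessing $w'(Y^N,\delta,T) \leq \varepsilon$ with gaps exceeding $\delta$ contains a grid point, giving $\sup_{t \leq T} |Y^N_t| \leq \max_j |Y^N_{s_j}| + \varepsilon$. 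Chaining from $t = 0$ alone, as you propose, does not work, because the increments $|Y^N_{\tau^N_{k+1}} - Y^N_{\tau^N_k}|$ include the uncontrolled overshoots (jumps) at the times $\tau^N_k$.
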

The first condition is trivially satisfied, and so it remains to prove the
following lemma.
\begin{lemma}
Under the conditions of Theorem~\ref{jonoinfinitelimit}, for 
non-negative $\phi\in C_0^3(\IR^d)$,
given $\varepsilon>0$, a sequence 
of stopping times $\tau_N$ bounded above by $T$, and 
a sequence of real numbers $\delta_N \to 0$ as $N \to \infty$, 
\begin{align*}
\lim_{N \to \infty} \IP[|Y^N_{\tau_N + \delta_N} - Y^N_{\tau_N} |>\varepsilon ] = 0
.
\end{align*}
\end{lemma}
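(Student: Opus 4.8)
The plan is to bypass the semimartingale decomposition entirely: since the martingale part of $\ldual X^N_\cdot,\phi\rdual$ has quadratic variation that is unbounded as $N\to\infty$, I instead control the increment $D_N:=Y^N_{\tau_N+\delta_N}-Y^N_{\tau_N}$ directly through the approximate duality \eqref{approximate duality relation}, where $Y^N_t:=\ldual X^N_t,\phi\rdual$. The process $X^N$ is a finite-rate pure-jump Markov process and hence strong Markov, so conditioning on $\cF_{\tau_N}$ and applying \eqref{approximate duality relation} to the process restarted from $X^N_{\tau_N}$ over an interval of length $\delta_N$ gives, for every $\lambda>0$,
\begin{align*}
\IE\big[\exp(-\lambda Y^N_{\tau_N+\delta_N})\,\big|\,\cF_{\tau_N}\big]
=\exp\big(-\ldual X^N_{\tau_N},v^N_{\lambda\phi}(\delta_N,\cdot)\rdual\big)+R^\lambda_N,
\end{align*}
with $|R^\lambda_N|\leq\cO(\varepsilon^N)\|\lambda\phi\|_\infty^2\,\ldual X^N_{\tau_N},\ind\rdual$ and $\varepsilon^N\to0$. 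Writing $U:=\exp(-\lambda Y^N_{\tau_N+\delta_N})$ and $W:=\exp(-\lambda Y^N_{\tau_N})$ (which is $\cF_{\tau_N}$-measurable and takes values in $[0,1]$), the aim is to show $U-W\to0$ in $L^2$ and then transfer this to $D_N\to0$ in probability.

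The key estimate is that $v^N_{\lambda\phi}(\delta_N,\cdot)\to\lambda\phi$ in supremum norm. From the mild form \eqref{vcharacterisationintermsofs},
\begin{align*}
v^N_{\lambda\phi}(\delta_N,\cdot)-\lambda\phi
=\big(S^N_{\delta_N}(\lambda\phi)-\lambda\phi\big)
-\int_0^{\delta_N}S^N_{\delta_N-s}\big(B^Nv^N_{\lambda\phi}(s,\cdot)\big)\dd s.
\end{align*}
Because $\{S^N_t\}$ is an $L^\infty$-contraction generated by $L^N$ and $\|L^N\psi\|_\infty\leq C\|\psi\|_{C^3}$ uniformly in $N$ (from the expansion $L^N\psi=\tfrac{m(N)}{2}\Delta\psi+\cO(\cdot)$ of Section~\ref{bounded variation part}, using $m(N)\to m$), the first bracket is $\cO(\delta_N\|\phi\|_{C^3})$. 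For the second term, the bound $B^N\psi\leq C\psi^{1+\beta}$ established in the proof of Lemma~\ref{supboundthetamomentlemma}, together with $0\leq v^N_{\lambda\phi}(s,\cdot)\leq S^N_s(\lambda\phi)\leq\lambda\|\phi\|_\infty$, shows it is $\cO\big(\delta_N(\lambda\|\phi\|_\infty)^{1+\beta}\big)$. Hence $\|v^N_{\lambda\phi}(\delta_N,\cdot)-\lambda\phi\|_\infty\to0$, and therefore
\begin{align*}
\big|\IE[U\,|\,\cF_{\tau_N}]-W\big|
\leq\|v^N_{\lambda\phi}(\delta_N,\cdot)-\lambda\phi\|_\infty\,\ldual X^N_{\tau_N},\ind\rdual+|R^\lambda_N|.
\end{align*}
Taking expectations and using that $\IE[\ldual X^N_{\tau_N},\ind\rdual]$ is bounded uniformly in $N$ (by Lemma~\ref{condn 2 of propn} and optional stopping), we obtain $\IE[U\,|\,\cF_{\tau_N}]\to W$ in $L^1$. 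Repeating the computation with $2\lambda$ in place of $\lambda$ gives $\IE[U^2\,|\,\cF_{\tau_N}]\to W^2$ in $L^1$.

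These two facts combine into an $L^2$ statement for the increment of the Laplace transform. Expanding
\begin{align*}
\IE\big[(U-W)^2\,\big|\,\cF_{\tau_N}\big]
=\IE[U^2\,|\,\cF_{\tau_N}]-2W\,\IE[U\,|\,\cF_{\tau_N}]+W^2,
\end{align*}
and using $0\leq W\leq1$ to control the cross term, the right-hand side tends to $0$ in $L^1$; hence $\IE[(U-W)^2]\to0$ and $U-W\to0$ in probability. To pass from the transform back to $D_N$, fix $\eta>0$ and use Lemma~\ref{supboundthetamomentlemma} (with horizon $T+1$) to pick $K$ with $\IP[\sup_{s\leq T+1}Y^N_s>K]<\eta$ for all large $N$. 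On $\{Y^N_{\tau_N}\leq K,\ Y^N_{\tau_N+\delta_N}\leq K\}$ the map $x\mapsto e^{-\lambda x}$ is bi-Lipschitz, so $|D_N|>\varepsilon$ forces $|U-W|\geq\lambda e^{-\lambda K}\varepsilon$; consequently
\begin{align*}
\IP[|D_N|>\varepsilon]\leq2\eta+\IP\big[|U-W|\geq\lambda e^{-\lambda K}\varepsilon\big]\longrightarrow2\eta,
\end{align*}
and letting $\eta\downarrow0$ finishes the proof. The main obstacle is exactly this concentration step, which replaces the quadratic-variation control available in the fixed-radius case: running the approximate duality from the random time $\tau_N$ only controls the conditional first moment of $U$, so one must also run it with $2\lambda$ to control the conditional second moment and thereby rule out non-vanishing conditional fluctuations of the increment.
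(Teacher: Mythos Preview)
Your proof is correct and follows essentially the same strategy as the paper's own argument: both condition on $\cF_{\tau_N}$, invoke the strong Markov property, feed the restarted process into the approximate duality \eqref{approximate duality relation}, and reduce everything to the estimate $\|v^N_{\lambda\phi}(\delta_N,\cdot)-\lambda\phi\|_\infty\to0$, which is obtained from the mild form \eqref{vcharacterisationintermsofs} exactly as you do.

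The only genuine difference is in the final passage from ``Laplace transforms are close'' to ``$D_N\to0$ in probability''. The paper works with the \emph{bivariate} Laplace transform $L^N(\delta_N;\lambda_1,\lambda_2)=\IE[\exp(-\lambda_1 Y^N_{\tau_N+\delta_N}-\lambda_2 Y^N_{\tau_N})]$, shows $L^N(\delta_N;\lambda_1,\lambda_2)-L^N(0;\lambda_1,\lambda_2)\to0$, and then uses tightness of the pair (from Lemma~\ref{supboundthetamomentlemma}) together with a subsequence argument: any weak limit $(Z_1,Z_2)$ of $(Y^N_{\tau_N+\delta_N},Y^N_{\tau_N})$ must satisfy $\IE[e^{-\lambda_1 Z_1-\lambda_2 Z_2}]=\IE[e^{-(\lambda_1+\lambda_2)Z_2}]$ and hence $Z_1=Z_2$ a.s. You instead run the approximate duality at both $\lambda$ and $2\lambda$ to control the conditional first \emph{and} second moments of $U=e^{-\lambda Y^N_{\tau_N+\delta_N}}$, obtain $\IE[(U-W)^2]\to0$ directly, and then invert via the bi-Lipschitz property of $x\mapsto e^{-\lambda x}$ on a large compact set. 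Your route is slightly more quantitative and avoids the soft subsequence step; the paper's route is closer to Dawson's original template. Either way the substance is the same.

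One small remark: your justification that $\IE[\ldual X^N_{\tau_N},\ind\rdual]$ is uniformly bounded via ``Lemma~\ref{condn 2 of propn} and optional stopping'' is in fact cleaner than you may realise, since $\cL^N(\ind)=0$ and hence $\ldual X^N_\cdot,\ind\rdual$ is a genuine martingale for each fixed $N$; the paper instead bounds $\IE[\sup_{s\le T}\ldual X^N_s,\ind\rdual]$ by $\IE[1+\sup_{s\le T}\ldual X^N_s,\ind\rdual^{1+\beta}]$ via Lemma~\ref{supboundthetamomentlemma}.
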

\begin{proof}
Again following the proof of Theorem~4.6.2 in \cite{dawson:1993},
we study the joint distribution of 
$\ldual X^N_{\tau^N+\delta^N},\phi\rdual$ and 
$\ldual X^N_{\tau^N}, \phi\rdual$ 
through the Laplace transform 
\begin{align*}
L^N(\delta_N; \lambda_1,\lambda_2) := \IE \left[ \exp\left(
-\lambda_1\ldual X^N_{\tau_N + \delta_N}, \phi \rdual - \lambda_2 \ldual X^N_{\tau_N} ,\phi \rdual
\right) \right]
.
\end{align*}
The approximate duality~(\ref{approximate duality relation}),
combined with the strong Markov property, gives
\begin{align*}
L^N(\delta_N; \lambda_1,\lambda_2) 
:= &
\IE \left[ \IE \left[ \exp\left(
-\lambda_1\ldual X^N_{\tau_N + \delta_N}, \phi \rdual - \lambda_2 \ldual X^N_{\tau_N} ,\phi \rdual
\right) 
\big| X^N_{\tau_N}
\right] \right]
\\
= &
\IE \left[ \exp\left(
-\ldual X^N_{\tau_N}, v^N_{\lambda_1\phi}(\delta_N,\cdot) \rdual - 
\ldual X^N_{\tau_N} ,v^N_{\lambda_2\phi}(0,\cdot) \rdual
\right) \right]
+ \cO(\varepsilon^N)\|\lambda_1\phi\|^2\ldual X_0^N,\ind\rdual
\\
=&
\IE \left[ \exp\left(
-\ldual X^N_{\tau_N}, v^N_{\lambda_1\phi}(\delta_N,\cdot) +
v^N_{\lambda_2\phi}(0,\cdot) \rdual
\right) \right]
+ \cO(\varepsilon^N)\|\lambda_1\phi\|^2\ldual X_0^N,\ind\rdual
,
\end{align*}
from which
\begin{multline*}
|L^N(\delta_N ; \lambda_1,\lambda_2) - L^N(0 ; \lambda_1,\lambda_2)|
\leq 
||v^N_{\lambda_1 \phi}(\delta_N,\cdot) - v^N_{\lambda_1\phi}(0,\cdot)||_\infty
\IE[ \sup_{s\leq T} \ldual X^N_s, \ind \rdual]
+ \cO(\varepsilon^N)\|\lambda_1\phi\|^2\ldual X_0^N,\ind\rdual
\\
\leq 
||v^N_{\lambda_1 \phi}(\delta_N,\cdot) - v^N_{\lambda_1 \phi}(0,\cdot)||_\infty
\IE[ 1 + \sup_{s \leq T} \ldual X^N_s, \ind \rdual^{1 +\beta}]
+ \cO(\varepsilon^N)\|\lambda_1\phi\|^2\ldual X_0^N,\ind\rdual
.
\end{multline*}
Now from~(\ref{vcharacterisationintermsofs})
it follows that
$\|v^N_{\lambda_1\phi}(\delta_N,\cdot) 
- v^N_{\lambda_1 \phi}(0,\cdot)\|_\infty \to 0$ as $N \to \infty$ and so 
combining with the above,
$$
|L^N(\delta_N ; \lambda_1,\lambda_2) - L^N(0 ; \lambda_1,\lambda_2)| \to 0.
$$ 
Tightness of $\{\ldual X^N_{\tau_N + \delta_N}, \phi \rdual ,  
\ldual X^N_{\tau_N} ,\phi \rdual \}$ follows from 
Lemma~\ref{supboundthetamomentlemma}.
Taking a convergent subsequence, since
$|L^N(\delta_N ; \lambda_1,\lambda_2) - L^N(0 ; \lambda_1,\lambda_2)| 
\to 0$, the limit of $\{\ldual X^N_{\tau_N + \delta_N}, \phi \rdual , 
\ldual X^N_{\tau_N} ,\phi \rdual \}$ must be of the 
form $\{ Z, Z\}$ and so we see that 
$\ldual X^N_{\tau_N + \delta_N}, \phi \rdual - 
\ldual X^N_{\tau_N} ,\phi \rdual \to 0$ in probability, which completes 
the proof.
\end{proof}

Tightness is now proved.

\section{Proofs of Lemmas~\ref{independence requirement fixed radius} 
and~\ref{independence requirement variable radius}} 
\label{indeplinsection}

In this section we exploit the duality of the SLFV with a process
of coalescing lineages to prove 
the two key estimates provided by
Lemmas~\ref{independence requirement fixed radius} 
and~\ref{independence requirement variable radius}.
In the fixed radius case, it is straightforward to 
make the heuristic argument of Section~\ref{heuristics} rigorous and 
analogous arguments can be found in, for example, \cite{etheridge/freeman/penington/straulino:2015}.
However, in the variable radius case, we must work a little harder
to recover the result claimed here. We present an approach that
works in either setting, but to avoid repetition restrict ourselves 
to the variable radius case.

\subsection{A coupling of ancestral lineages}

The key to our proof is the duality of the SLFV from 
Proposition~\ref{prop: dual}.
We first consider
\begin{equation}
\label{square integral}
\IE\left[
\int_{\IR^d}
\left(\frac{1}{|\cB_r^M|}\int_{\cB_r^M(x)}\phi(y) X_t^N(y)\dd y\right)^2
\dd x \right]
\end{equation}
for fixed $r$ and $t$. 

Using Proposition~\ref{prop: dual}, for integrable functions $\psi$,
for the scaled process,
\begin{equation*}
\IE\left[\int_{\IR^d} \int_{\IR^d} \psi(y,z) w^N_t(y)w^N_t(z) \dd y \dd z  \right]
= 
\int_{\IR^d} 
\int_{\IR^d} 
\psi(y,z)
\IE_{(y,z)}
\left[
\prod_{i=1}^{N_t} w_0^N(\xi^{N,i}_t)
\right]
\dd y \dd z,
\end{equation*}
where $\{\xi_t^{N,1},\xi_t^{N,2}\}$ are the positions of two ancestral 
lineages started from $y$ and $z$ respectively at time $0$. In the scaled
ancestral
process, events of radius $r/M$ covering the point $y$ fall at rate 
$Nr^d\mu^N(\dd r)$. During such an event, each lineage in the region covered
by the event, independently, is affected with probability $u(r)/J$.
Those lineages that are affected all jump to the same
new position which is
chosen uniformly at random from the affected region.
To estimate~(\ref{square integral}) we take
$$\psi(y,z)=\frac{1}{|\cB_r^M|^2}
\int_{\IR^d}\phi(y)\phi(z)\ind_{|y-x|<r/M}\ind_{|z-x|<r/M}\dd x$$
and recall that $X^N=Kw^N$.
The key tool is a coupling.
\begin{propn}
\label{coupling}
Let $\{\xi_t^{N,1}, \xi_t^{N,2}\}_{t\geq 0}$ be the scaled ancestral
lineages above. Then 
$$(\xi_t^{N,1},\xi_t^{N,2})\stackrel{d}{=}\big(Y_t^{N,1}+\varepsilon_t^{N,1},
(Y_t^{N,2}+\varepsilon_t^{N,2})\ind_{t<\tau}
+(Y_t^{N,1}+\varepsilon_t^{N,1})\ind_{t\geq \tau}\big),
$$
where $Y^{N,1}$ and $Y^{N,2}$ are independent random walks 
and the random coalescence time $\tau$ has hazard rate bounded above when 
$|\xi_t^{N,1}-\xi_t^{N,2}|=2r_0/M$ by
\begin{equation}
\label{hazard rate}
h(r_0)=C\frac{M^2}{J}\frac{1}{(r_0\vee J^{-1/\gamma})^{\gamma-\beta(\gamma-d)}},
\end{equation}
with $C$ a constant independent of $N$. Moreover, for $d\geq 2$ 
\begin{equation}
\label{bound on epsilon}
\IE[\sup_{0\leq t\leq T}M^2 J^{2/\gamma}
|\varepsilon_t^{N,1}-\varepsilon_t^{N,2}|^2]
\to 0\quad\mbox{as } N\to\infty.
\end{equation}
whereas if $d=1$, 
\begin{equation}
\label{bound on epsilon d=1}
\IE[\sup_{0\leq t\leq T}J 
|\varepsilon_t^{N,1}-\varepsilon_t^{N,2}|^2]
=\cO\big( 1 \big).
\end{equation}
\end{propn}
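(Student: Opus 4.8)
The plan is to realise both the true pair of lineages $(\xi^{N,1},\xi^{N,2})$ and the two independent walks $(Y^{N,1},Y^{N,2})$ on one probability space, driven by the single Poisson process $\Pi^N$, and to read the three assertions off the construction. The organising observation is that after scaling every event has radius at most $1/M$, so two lineages at scaled separation larger than $2/M$ cannot be covered by a common event. During any stretch on which $|\xi^{N,1}-\xi^{N,2}|>2/M$ the two lineages respond only to events whose balls fall in disjoint regions and, by the independence of $\Pi^N$ over disjoint regions, move as genuinely independent walks; on such stretches the increments of $\xi^{N,i}$ and $Y^{N,i}$ coincide and no further mismatch is accrued. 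The sole coupling mechanism is an event whose ball meets both lineages, which can occur only while they are within $2/M$ of one another. I would handle such an event by feeding the jump it imposes on $\xi^{N,2}$ through a \emph{freshly resampled} independent event for $Y^{N,2}$ (and symmetrically in the first coordinate), which preserves both the independence of $Y^{N,1}$ and $Y^{N,2}$ and their correct single--lineage marginals, while booking the displacement discrepancy into $\varepsilon^{N,1},\varepsilon^{N,2}$. Finally I would let $\tau$ be the first time a common event marks \emph{both} lineages; at that instant Definition~\ref{defn of dual} sends them to a common location, which is precisely the coalescence recorded by $\ind_{t\geq\tau}$ in the statement.

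The hazard rate \eqref{hazard rate} is the most transparent ingredient, and I would compute it first. When the scaled separation is $2r_0/M$, so the unscaled separation is $2r_0$, coalescence requires a single event to cover both lineages and, independently, to mark both, the latter having probability $(u(r)/J)^2$ at a covering event. The rate of events of unscaled radius in $[r,r+\dd r]$ covering both points is $N$ times the volume of the intersection $\cB_r(\xi^{N,1})\cap\cB_r(\xi^{N,2})$, which vanishes for $r<r_0$ and is bounded by $Cr^d$; hence the coalescence rate is at most $C\frac{N}{J^2}\int_{r_0\vee J^{-1/\gamma}}^{1} r^{d-2\gamma+\alpha}\dd r$. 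The exponent $d-2\gamma+\alpha=-(1-\beta)\gamma-\beta d-1$ is strictly below $-1$, so the integral is governed by its lower cut--off $r_0\vee J^{-1/\gamma}$ and equals a constant multiple of $(r_0\vee J^{-1/\gamma})^{-(\gamma-\beta(\gamma-d))}$; substituting $\alpha+1=(\beta+1)(\gamma-d)$ and using $N/(JM^2)\to C_1$ from Condition~\ref{infvarcond1} to rewrite $N/J^2$ as a constant multiple of $M^2/J$ produces exactly $h(r_0)$.

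It remains to bound the error, which is where the real work lies. By construction $\Delta\varepsilon:=\varepsilon^{N,1}-\varepsilon^{N,2}$ is a pure--jump process that changes only while the lineages interact, that is while their separation is at most $2/M$, and each jump is bounded by the diameter of the responsible event, hence by $2/M$; a jump measures the mismatch between the lens--biased displacement that an event covering both lineages imposes on $\xi^{N,\cdot}$ and the unbiased displacement of the resampled event driving $Y^{N,\cdot}$. I would compensate $|\Delta\varepsilon|^2$ into a martingale plus predictable part, control the supremum by Burkholder--Davis--Gundy and Doob exactly as the quadratic variation was handled in Section~\ref{tightness}, and thereby reduce the claim to an estimate for the expected time--integral, over the interaction region, of the instantaneous jump--variance rate of $\Delta\varepsilon$. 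Reading this rate off the covering intensity $N\,r^{d}(u(r)/J)\,\mu^N(\dd r)$ and integrating it against the occupation measure of a $2/M$--neighbourhood of the diagonal by the (asymptotically independent) difference $\xi^{N,1}-\xi^{N,2}$ is the crux: the time this difference spends within $2/M$ of $0$ is of recurrent type in $d=1$ but of transient type in $d\geq 2$, and it is this dichotomy that yields the two normalisations $J$ in \eqref{bound on epsilon d=1} and $M^2J^{2/\gamma}$ in \eqref{bound on epsilon}, and that brings in the sparsity hypotheses \eqref{infvarrw}. The main obstacle I anticipate is making the resampling coupling quantitative enough --- comparing the biased and unbiased jump laws through the total--variation distance $\cO(r_0/r)$ of their centres --- while simultaneously controlling the excursions of the difference process away from the diagonal sharply enough to extract these exact powers; here the coalescence bound \eqref{hazard rate} is used in tandem, since it guarantees that the lineages reach scale $\cO(1)$ before coalescing.
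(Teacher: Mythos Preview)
Your hazard--rate computation is correct and is exactly what the paper does. The coupling construction and the error bound, however, follow a much more laborious route than necessary, and the obstacle you anticipate at the end is real: controlling the occupation time of the difference $\xi^{N,1}-\xi^{N,2}$ in a $2/M$--neighbourhood of the origin, sharply enough to produce the powers in \eqref{bound on epsilon} and \eqref{bound on epsilon d=1}, is genuinely delicate --- and it is work the paper does not have to do.

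The paper's device is a \emph{Bernoulli thinning} of $\Pi^N$ that is done before any lineage has moved. Each event is augmented by two independent Bernoulli$(u(r)/J)$ marks $\eta_1,\eta_2$; the independent walks $Y^{N,i}$ are then driven solely by the events with $\eta_i=1,\;\eta_{3-i}=0$, which are disjoint thinnings of a Poisson process and therefore independent, full stop. The residual $\varepsilon^{N,i}$ is whatever the events with $\eta_1=\eta_2=1$ (call this $\Pi^{N,1,2}$) contribute to $\xi^{N,i}$. The point is that these events have intensity proportional to $(u(r)/J)^2$, and the rate at which such an event covers a \emph{single} lineage --- of order $\frac{N}{J^2}\int r^{d-2\gamma+\alpha}\,\dd r$ --- does not depend on where the other lineage is. Thus $\widetilde\varepsilon^{N,1}-\widetilde\varepsilon^{N,2}$ (with the tilde denoting the obvious non--coalescing modification) is a mean--zero martingale whose quadratic variation over $[0,T]$ is bounded, by translation invariance alone, by
\[
C\,\frac{N}{J^2M^2}\int_{J^{-1/\gamma}}^{1} r^{\,d+2-2\gamma+\alpha}\,\dd r,
\]
and Doob's inequality finishes. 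The dichotomy between $d=1$ and $d\geq 2$ in \eqref{bound on epsilon}--\eqref{bound on epsilon d=1} is then purely a matter of whether this integral converges at $r=0$ (it does when $d=1$, giving $\cO(1/J)$) or is governed by the lower cutoff $J^{-1/\gamma}$ (it is when $d\geq 2$), not a recurrence/transience phenomenon for the difference walk.

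By contrast, your ``close/far'' coupling only accrues error during close stretches, but at the higher per--event rate $u(r)/J$, so you must trade a faster accrual rate against an occupation--time factor that is itself hard to pin down (and your claim that $Y^{N,1},Y^{N,2}$ are independent needs care, since the regime--switching times depend on both lineages). The paper accepts error accrual at all times, but at the much smaller rate $(u(r)/J)^2$, and the bookkeeping becomes a one--line integral. I would recommend adopting the thinning construction; it removes the obstacle you flagged.
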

Before proving this result, let us see why it helps.
The error terms $\varepsilon_t^{N,i}$ are very far from being independent, but
they are very small. 
Notice in particular that if $\xi^{N,1}$ and $\xi^{N,2}$ have coalesced
by time $t$, then since $Kw^N_0=X_0^N$ converges weakly to $X_0$,
\begin{equation*}
\int_{\IR^d} 
\int_{\IR^d} 
\psi(y,z)
\IE_{(y,z)}[w_0^N(\xi^{N,1}_t)]
\dd y \dd z,
\end{equation*}
is of order $1/K$, which when multiplied by $K^2$ to give us the 
contribution to~(\ref{square integral}) is $\cO(K)$.
If, on the other hand, the two lineages have not coalesced, 
then they are close to independent random walks and
\begin{equation*}
\int_{\IR^d} 
\int_{\IR^d} 
\psi(y,z)
\IE_{(y,z)}[w_0^N(\xi^{N,1}_t)w_0^N(\xi^{N,2}_t)]
\dd y \dd z=\cO\big(\frac{1}{K^2}\big).
\end{equation*}

Under the assumptions of 
Theorem~\ref{result fixed radius}, $(KN)/(J^2M^d)\to C_2$ as
$N\to\infty$, and so the proof of 
Lemma~\ref{independence requirement fixed radius} 
is reduced to checking that the coalescence probability 
tends to zero as $N\to\infty$.
Under the 
conditions of Theorem~\ref{jonoinfinitelimit}, 
\begin{equation}
\label{integral of rates 1}
\frac{N}{J^2M^d}\int_0^\infty u(r)^2 r^{2d}\mu^N(dr)=
\cO\left(\frac{N}{J^2M^d}J^{(1-\beta)(\gamma-d)/\gamma}\right)
=\cO\left(\frac{M^{2-d}}{J}J^{(1-\beta)(\gamma-d)/\gamma}\right).
\end{equation}
In $d\geq 2$, since $(1-\beta)(\gamma-d)/\gamma<1$, we easily see that 
this tends to zero as $N\to\infty$. The sparsity 
condition~(\ref{infvarrw}) guarantees that the same is true in $d=1$ and
so the contribution to the expression in 
Lemma~\ref{independence requirement variable radius}
from lineages that have not coalesced is negligible. 
On the other hand, multiplying the expression in~(\ref{integral of rates 1})
by $K$ we obtain an expression of order
$$\cO\left(\frac{KN}{J^2M^d}J^{(1-\beta)(\gamma-d)/\gamma}\right)=
\cO\left(\Big(\frac{J}{N}\Big)^{(1-\beta)/\beta}J^{(1-\beta)(\gamma-d)/\gamma}\right),
$$
which grows without bound as $N\to\infty$ (because of
Condition~\ref{infvarcond3} of Theorem~\ref{jonoinfinitelimit}). To prove
Lemma~\ref{independence requirement variable radius}
we must show that this quantity multiplied by the 
coalescence probability tends to zero. In $d\geq 2$, under the 
conditions of Theorem~\ref{jonoinfinitelimit}, the coalescence 
probabilities will be of the same order as in the fixed radius case, and
so the Lemma will follow easily  
(again using $(1-\beta)(\gamma-d)/\gamma<1$). In one
dimension, things are more delicate, and we shall see the need for 
our more stringent sparsity condition.

~

\begin{proof}[Of Proposition~\ref{coupling}]

We rewrite the Poisson process of events $\Pi^N$ that drives the dynamics
of $\xi^{N,1}$ and $\xi^{N,2}$ as the sum of four components through 
a thinning. Each event $(x,t,r,\rho)\in\Pi$ is augmented by two 
independent Bernoulli random variables, $\eta_1$, $\eta_2$, each with
success probability $\rho=u(r)=r^{-\gamma}$. The random variable 
$\eta_i$ determines whether or not the $i$th ancestral lineage is 
affected by the event. We now let $\Pi^{N,1}$ be the 
events in $\Pi$ for which $\eta_1=1, \eta_2=0$; $\Pi^{N,2}$ is the subset of
events with $\eta_1=0, \eta_2=1$ and $\Pi^{N,1,2}$ is the events with
$\eta_1=\eta_2=1$. The remaining events won't affect the motion of either
lineage.

The lineage $\xi_t^{N,i}$ is driven by 
$\Pi^{N,i}\bigcup\Pi^{N,1,2}$ (recalling that after coalescence there is
a single lineage, $\xi^{N,1}$). Coalescence results from both lineages
being in the region covered by an event from $\Pi^{N,1,2}$.  
If $|\xi_t^{N,1}-\xi_t^{N,2}|=2r_0/M$, then the rate at which they are both
in the region affected by an event from $\Pi^{N,1,2}$ is bounded above by
$$C'\frac{N}{J^2}\int_{r_0}^1r^du(r)^2\mu^N(\dd r)=
C'\frac{N}{J^2}\int_{r_0\vee J^{-1/\gamma}}^1 r^{d-2\gamma+\alpha}\dd r
\leq C\frac{M^2}{J}\frac{1}{(r_0\vee J^{-1/\gamma})^{\gamma-\beta(\gamma-d)}},
$$
where we have approximated the volume of the centres $x$ such that
$\cB_r(x)$ contains two points at separation $2r_0$ by $C(d)r^d$ for $r>r_0$
and we have used that $\alpha+1=(\beta +1)(\gamma -d)$ and
$N/(JM^2)\to C_1$ as $N\to\infty$.

Let $Y^{N,i}$ be the random walk driven by the events of $\Pi^{N,i}$ and 
$\varepsilon_t^{N,i}=\xi_t^{N,i}-Y_t^{N,i}$. Evidently, since they are driven by
independent Poisson processes of events, the walks $Y^{N,1}$ and $Y^{N,2}$ are
independent. It remains to bound $|\varepsilon^{N,1}-\varepsilon^{N,2}|$.
First observe that we can couple our lineages $\xi_t^{N,i}$ with a system
$\widetilde{\xi}_t^{N,i}$ of lineages that do not coalesce, by at each
event of $\Pi^{N,1,2}$ choosing {\em two} parental positions
independently and uniformly at random from the affected region
and, if both lineages are in the affected region, the 
lineage $\widetilde{\xi}_t^{N,i}$
jumps to the $i$th parental location. 
Until the coalescence time $\tau$, the processes $\widetilde{\xi}_t^{N,i}$
and $\xi_t^{N,i}$ coincide. We define $\widetilde{\varepsilon}_t^{N,i}$
in the obvious way. Evidently, it will suffice to control the supremum of 
$|\widetilde{\varepsilon}^{N,1}-\widetilde{\varepsilon}^{N,2}|$ over the
time interval $[0,T]$.
 
First observe that
$$M_{\widetilde{\varepsilon}}(t):=\widetilde{\varepsilon}_t^{N,1}-
\widetilde{\varepsilon}_t^{N,2}$$ 
is a mean
zero martingale.
We should like
to estimate its variance at time $t$. It is driven entirely by events
from $\Pi^{N,1,2}$ and by 
exploiting translation invariance of the system, we find that
\begin{eqnarray*}
\IE[\ldual M_{\widetilde{\varepsilon}}\rdual_t]&\leq & C
\int_0^t\frac{N}{J^2}\int_{1/J^{1/\gamma}}^1\big(\frac{r}{M}\big)^2
r^{-2\gamma+d+\alpha}\dd r\dd s\\
&=& \left\{\begin{array}{ll}
\cO\big(\frac{1}{J}\big)& \mbox{if }\beta(\gamma-d)-\gamma+2>0;\\ \\
\cO\big(\frac{\log J}{J}\big)& \mbox{if }\beta(\gamma-d)-\gamma+2=0;\\ \\
\cO\big(\frac{1}{J^{(2+\beta(\gamma-d))/\gamma}}\big)& \mbox{if }
\beta(\gamma-d)-\gamma+2<0. 
\end{array}\right.
\end{eqnarray*}
Since $\beta(\gamma-d)-\gamma+2=2-d-(1-\beta)(\gamma-d)$ and under
our assumptions $(1-\beta)(\gamma-d)<1$, if $d=1$ the first line
holds, whereas 
if $d\geq 2$ the third line holds.
In particular, for $d\geq 2$, using Condition~\ref{infvarcond3}
of Theorem~\ref{jonoinfinitelimit}, 
the quantity on the right is always 
$o(1/(M^2J^{2/\gamma}))$ from which~(\ref{bound on epsilon}) easily follows via
Doob's maximal inequality. An application of Doob's inequality also gives the
result in $d=1$.
\end{proof}

\subsection{Coalescence Probabilities}

It remains to estimate our coalescence probabilities.
\begin{propn}
\label{coalescence probabilities}
Let the scaled ancestral lineages $\xi^{N,1}$, $\xi^{N,2}$ be as in 
the previous subsection. We start them from two points sampled 
independently at random from $\cB^M_r(0)$. The probability that they
have coalesced by time $T$, which we shall denote by $p_N(T)$ satisfies
\begin{equation}
\label{hazard bounds}
p_N(T)\leq\left\{
\begin{array}{ll}
C J^{(1-\beta)(\gamma-1)/\gamma} \; \frac{M^2}{J}
&\mbox{in }d=1;\\
\\
C \frac{\log M}{J} &\mbox{in }d=2;\\
\\
C\frac{1}{J}&\mbox{in }d\geq 3.
\end{array}\right.
\end{equation}
\end{propn}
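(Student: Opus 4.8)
The plan is to bound $p_N(T)=\IP[\tau\le T]$ by an expected integrated hazard for a pair of (essentially) independent walks, and then to extract the dimension dependence from classical recurrence/transience estimates for their difference. Concretely, I would start from the elementary bound $p_N(T)\le \IE\big[\int_0^T \lambda_s\dd s\big]$, where $\lambda_s$ is the coalescence hazard at time $s$. By Proposition~\ref{coupling} we may take $\lambda_s\le h(r_0)$ with $r_0=\tfrac M2|\xi_s^{N,1}-\xi_s^{N,2}|$, and $\xi_s^{N,1}-\xi_s^{N,2}$ differs from the difference $W_s:=Y_s^{N,1}-Y_s^{N,2}$ of the two independent walks only by $\varepsilon_s^{N,1}-\varepsilon_s^{N,2}$, which is controlled in $L^2$ by~(\ref{bound on epsilon}) and~(\ref{bound on epsilon d=1}). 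Since the hazard~(\ref{hazard rate}) saturates below the cutoff $r_0=J^{-1/\gamma}$, I would first argue that replacing $|\xi^{N,1}-\xi^{N,2}|$ by $|W|$ perturbs the integrated hazard negligibly, the $\varepsilon$-displacement being of smaller order than the cutoff scale in $d\ge2$; this reduces everything to estimating $\IE\big[\int_0^T h(\tfrac M2|W_s|)\dd s\big]$ for a single mean-zero, finite-variance random walk $W$.

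I would then write the integrated hazard as $\int(\text{hazard})(z)\,G^N_T(W_0,z)\dd z$ against the occupation measure $G^N_T$ of $W$, and decompose the trajectory of $W$ into successive excursions away from an ``encounter'' scale, exactly as in the heuristic of Section~\ref{heuristics}. Two ingredients then drive the estimate: (i) the coalescence probability accrued during one close encounter, obtained by integrating $h$ against the Green's function of a single excursion and treating the saturated region $r_0\le J^{-1/\gamma}$ with care; and (ii) the expected number of encounters before $W$ escapes to a scaled distance of order one. Ingredient (ii) is where the dimension enters: after diffusive rescaling $W$ behaves like Brownian motion with diffusivity governed by $m(N)$, so the number of returns to the encounter scale before reaching distance $\cO(1)$ is $\cO(M)$ in $d=1$, $\cO(\log M)$ in $d=2$ and $\cO(1)$ in $d\ge3$. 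These are precisely the three factors in~(\ref{hazard bounds}); multiplying by the per-encounter coalescence probability, which contributes the factor $1/J$ (together with, in $d=1$, the additional power $J^{(1-\beta)(\gamma-1)/\gamma}$ produced by the saturated contribution of the smallest events), gives the claimed bounds.

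I expect the main obstacle to be the interplay between the saturated region and the recurrence of $W$ in low dimensions, and in particular the one-dimensional case. The crude hazard bound~(\ref{hazard rate}) is lossy near the cutoff, since events whose radius is close to $J^{-1/\gamma}$ have impact close to one and therefore coalesce any two lineages they both cover; the delicate point is thus to show that $W$ does not accumulate appreciable occupation below the cutoff \emph{without} coalescing, so that in the transient regime $d\ge3$ the saturated region is subdominant and one recovers the clean bound $C/J$. In $d=1$, by contrast, strong recurrence forces order $M$ returns and a genuinely non-negligible local time near the origin, so the saturated region dominates and the power $J^{(1-\beta)(\gamma-1)/\gamma}$ must be tracked explicitly rather than absorbed into a constant; this is exactly where the stronger sparsity hypothesis~(\ref{infvarrw}) is consumed, and where the coupling error~(\ref{bound on epsilon d=1}) is only of order one, so the first reduction step must be performed more carefully. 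In $d\ge2$ the transience or critical recurrence of $W$ keeps the number of encounters small, the per-encounter coalescence probability is of the same order as in the fixed-radius case, and the argument closes using $(1-\beta)(\gamma-d)/\gamma<1$.
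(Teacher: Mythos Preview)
Your outline is correct and would yield the stated bounds, but it takes a different computational route from the paper. You propose an excursion decomposition: count the expected number of returns of $W$ to an encounter scale ($\cO(M)$, $\cO(\log M)$, $\cO(1)$ in $d=1,2,\ge3$) and multiply by the per-encounter coalescence hazard ($\sim 1/J$, with the extra power of $J$ in $d=1$ from the saturated region). The paper instead bypasses the excursion picture entirely: after the same reduction to $\IE\big[\int_0^T h(M|W_s|)\dd s\big]$, it evaluates the occupation integral directly by a Fourier/resolvent computation in the style of Darling--Kac. Writing $p^{\circ k}$ for the $k$-step transition density of the unscaled walk and $z_M=1-1/M^2$, the paper bounds the integral by $M^{-2}\sum_{k\ge0} z_M^k\int p^{\circ k}(0,y)h(|y|)\dd y$, takes Fourier transforms to get $(1-z_M\phi)^{-1}$ where $\phi$ is the characteristic function of a single step, and inverts: in $d\ge3$ this gives the Riesz potential $|x|^{2-d}$, in $d=2$ the modified Bessel function $K_0$ with its logarithmic singularity, and in $d=1$ the kernel $K_{1/2}$, which is essentially constant at the relevant scales and produces the factor $M$.

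What each buys: your excursion argument is closer to the heuristics of Section~\ref{heuristics} and makes the dimensional dependence transparent, but to make it rigorous you must fix an encounter scale, control the hazard accrued within a single excursion (including the saturated region $r_0\le J^{-1/\gamma}$, which as you correctly note dominates in $d=1$), and handle possible overcounting. The paper's Fourier approach is less illuminating but more mechanical: the dimensional trichotomy drops out of the singularity of $(1-z_M\phi)^{-1}$ at the origin, and no excursion bookkeeping is needed. On the $\varepsilon$-correction the paper is also more explicit than your sketch: it Taylor expands $h(M(W_s+\varepsilon_s))$ and uses the pointwise bounds $h'(r)\le J^{1/\gamma}h(r)$, $h''(r)\le J^{2/\gamma}h(r)$ (valid because $h$ is a power on $[J^{-1/\gamma},1]$) together with~(\ref{bound on epsilon}) or~(\ref{bound on epsilon d=1}) to absorb the remainder; in $d=1$ this step, not the main term, is what supplies the second factor of $M$ in the stated bound, beyond the $M/J\cdot J^{(1-\beta)(\gamma-1)/\gamma}$ that the walk $W$ alone contributes.
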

In the light of the discussion immediately after the statement of 
Proposition~\ref{coupling}, this result will complete 
the proof of our key lemma in the 
variable radius case. 

\begin{proof}[of Proposition~\ref{coalescence probabilities}]
Our aim is to show that 
\begin{equation}
\label{total hazard bound}
\IE\Big[\int_0^T h(M|\xi_s^{N,1}-\xi_s^{N,2}|)\dd s\Big]
\end{equation}
is bounded by the quantities on the right hand side of~(\ref{hazard bounds}).
where $h(r)$ is given by~(\ref{hazard rate}). 
To do this, we shall 
use the coupling of Proposition~\ref{coupling}
to approximate $|\xi^{N,1}-\xi^{N,2}|$ 
by $|Y^{N,1}-Y^{N,2}|$ 
and then the classical Fourier transform approach of \cite{darling/kac:1957} to estimate the corresponding
expected integral of the hazard rate.
Since we are only interested in the separation of our lineages we denote $Y^N:= |Y^{N,1} - Y^{N,2}|$ and $\varepsilon^N:= |\varepsilon^{N,1} - \varepsilon^{N,2}|$.

We first observe that
\begin{multline}\label{hazardratedecomposition}
\IE\Big[\int_0^{T}h\big(M(Y^N_s+\varepsilon_s^N)\big)\dd s\Big]
\\
=\int_0^T \IE[h(MY^N_s)+M\varepsilon_s^Nh'(MY_s^N)+(M\varepsilon_s^N)^2
h''\big(MY_s^N+\theta(s)M\varepsilon_s^N\big)]ds
,
\end{multline}
for some $\theta(s)\in(0,1)$ and that
\begin{equation} \label{hazardratederivatives}
h'(r)=Ch(r)\frac{1}{r}\ind_{[J^{-1/\gamma},1]}(r)\leq J^{1/\gamma}h(r);
\quad
h''(r)=C'h(r)\frac{1}{r^2}\ind_{[J^{-1/\gamma},1]}(r)\leq J^{2/\gamma}h(r)
.
\end{equation}
We now begin with $d\geq 2$.
Using~(\ref{bound on epsilon}), we see that
$$\IE\Big[\int_0^{T}h(M(Y^N_s+\varepsilon_s^N)\dd s\Big]
\leq C \int_0^T \IE[h(MY^N_s)]ds.$$
The random walk $Y^N$ jumps at exponentially distributed times with mean 
$\cO(1/M^2)$, and so
will make $\cO(M^2)$ such jumps by time $T$. 
It is evidently enough to show that 
$$\int_0^{T_N} \IE[h(MY^N_s)]ds$$
is bounded by the quantities on the right of~(\ref{hazard bounds})
where $T_N$ is the random time at which the walk $Y^N$ has made 
a geometric number of jumps with mean $M^2$.
Let us write $p(x,y)$ for the probability density function of 
the unscaled displacement $MY^N$ at a single jump and
$p^{\circ k}(x,y)$ for its $k$-fold convolution.
Further write $\cT_M$ for the time of the $G^M$th jump of $Y^1$, where
$G^M$ is an independent geometrically distributed random variable with
mean $M^2$.
Then 
\begin{multline}
\label{IM}
\IE\Big[\int_0^{T_N} h(MY^N_s)\dd s\Big]
=\frac{1}{M^2}\IE\Big[\int_0^{\cT_M}h(Y_s^1)\dd s\Big]
\\
\leq \frac{1}{M^2}\sum_{k=0}^{\infty}\int_{\IR^d} z_M^k p^{\circ k}(0,y)h(|y|)dy
:= Q_M
\end{multline}
where $z_M=1-1/M^2$ and on the right hand side we have bounded the 
expectation above by setting $Y_0=0$.
Define $I_M(x):= \sum_{k=0}^{\infty}z_M^k p^{\circ k}(0,x)h(|x|)$. Writing $\phi(t)$ for the characteristic function of the displacement of
$Y$ in a single jump and taking Fourier transforms, 
$$\widehat{I}_M(t)= \Big(\frac{1}{1-z_M\phi}\circ \widehat{h}\Big)(t).$$
Since $Y$ is rotationally symmetric with strictly bounded jumps, its 
characteristic function is real-valued and $\sim 1-C|t|^2$ close to 
the origin and 
$$\frac{1}{1-z_M\phi(t)}\leq \frac{C'}{\frac{1}{M^2}+C|t|^2}.$$

In $d\geq 3$ we may let $M\to\infty$ in this 
expression (corresponding to an infinite time
horizon) and using
that the inverse Fourier transform of $|t|^{-2}$ is $|x|^{2-d}$ and, recalling the definition of $h$ from \eqref{hazard rate}, we can approximate $Q_M$ by
$$
\frac{1}{M^2} \int_{\IR^d}I_M(y) dy \leq \frac{C}{M^2}\frac{M^2}{J}\int_0^1 r^{2-d}r^{d-1}
\frac{1}{r^{\gamma-\beta(\gamma-d)}}\dd r
=\frac{C}{J}\int_0^1r^{1-\gamma + \beta(\gamma-d)}\dd r
\propto\frac{1}{J},$$
where we obtained an upper bound by replacing the lower limit
of integration by zero.

In $d=2$, 
the inverse Fourier transform of $1/(\epsilon^2+|t|^2)$ is 
$K_0(\epsilon |x|)$, where $K_0$ is the modified Bessel function 
of the second kind of order zero (Eq.~17, p.365, \cite{gelfand/shilov:1964})
.
Recalling that $K_0(z)\sim\log(1/z)$ as $z\to 0$, 
and substituting in~(\ref{IM}) yields 
$Q_M \leq C(\log M)/J$.

We now consider $d=1$.
Looking again at $Q_M$, we can no longer replace the lower limit in the
integral by zero as
$r^{-2\gamma+\alpha+1}$ is not integrable at the origin. Instead 
observe that (by a change of variables)
the inverse Fourier transform of $(1/M^2+|t|^2)^{-1}$ 
at $x$ is 
$M$ times that of $1/(1+|t|^2)$ evaluated at $x/M$, which,
using Eq.40, p.363 of \cite{gelfand/shilov:1964} and writing $K_{1/2}$ for the modified Bessel function 
of the second kind of order $1/2$,
$$\sqrt{\frac{|x|}{M}}K_{1/2}\left(\frac{|x|}{M}\right)\sim 1
\quad\mbox{as }M\to \infty.$$
Substituting,
\begin{eqnarray*}
Q_M \leq \frac{CM^2}{J}\int_0^1\frac{1}{M}\int^1_{r_0\vee 1/J^{1/\gamma}}r^{-2\gamma+\alpha+1}\dd r \dd r_0
&=&C\frac{M}{J}\frac{1}{J^{1/\gamma}}\Big(\frac{1}{J^{1/\gamma}}\Big)^{-2\gamma + \alpha +2}
+C'\frac{M}{J}\\
&=&C''\frac{M}{J}J^{(1-\beta)(\gamma-1)/\gamma}.
\end{eqnarray*}
In other words the main contribution comes from separations at most 
$1/J^{1/\gamma}$.
This bounds the contribution from $h(MY^N_s)$ in \eqref{hazardratedecomposition}. From~(\ref{bound on epsilon d=1}), $M|\widetilde{\varepsilon}^N|$ is at most
$\cO(M/\sqrt{J})$ and so
if $M/\sqrt{J}\leq \cO(J^{-1/\gamma})$,
by using \eqref{hazardratederivatives} as in two dimensions, we obtain a bound which is $1/M$ times
the expression on the right hand side of~(\ref{hazard bounds}) and we
are done. If not then the order
of the error is $MJ^{1/\gamma}/\sqrt{J}$ times the quantity above and, 
since by assumption $\gamma>2$, this completes the proof.
\end{proof}

\section{Showing convergence}
\label{convergence section}

Armed with tightness, it remains to identify the limit points of the
sequences $\{X^N\}_{N\geq 1}$.

\subsection{Proof of Theorem~\ref{result fixed radius}}

Recall that for each $N$, using Lemma~\ref{roughgenerator},
$$M^N(\phi)_t=\ldual X_t^N,\phi\rdual-\ldual X_0^N,\phi\rdual
-\int_0^t \ldual X_s^N,\cA^N(\phi)\rdual ds -\zeta^N_t(\phi)$$
is a martingale.

From the convergence of $m(N)$,
\begin{align*}
\lim_{N \to \infty} \IE 
\left[
\sup_{t \leq T}
\left\abs
\int_0^t \ldual X^{N}_s, \cA^{N}(\phi) \rdual \dd s + \zeta^N_t(\phi)
-
\int_0^t \ldual X_s, \frac{m}{2} \Delta \phi \rdual \dd s
\right\abs
\right]
= 0
,
\end{align*}
where $m$ is as in the statement of
Theorem~\ref{result fixed radius}.
Moreover, using~(\ref{quadratic variation})
and Lemma~\ref{independence requirement fixed radius} 
$$\lquad M^N(\phi)\rquad_t\to 2\kappa\int_0^t\ldual X_s^N,\phi\rdual \dd s$$
as $N\to\infty$.

Suppose that $\{X^{N_k}\}_{k\geq 1}$ is a convergent subsequence.
To prove that the limit is the finite variance superBrownian motion, we
must check that the martingale property is preserved under passage to
the limit and that for each 
$t\leq T$, $M_t^{N_k}(\phi)^2+\lquad M^{N_k}(\phi)\rquad_t$
is uniformly integrable (see e.g.~Lemma~II.4.5 in \cite{perkins:2002}).
To check the first condition, suppose that 
$0 \leq t_1 <...< t_k \leq s < t$, $h_1,...,h_k \in C_b$ and 
$\phi \in C_0^3(\IR^d)$. An application of the Dominated Convergence Theorem
yields
\begin{align*}
\IE \left[ 
\bigg(
\ldual X_t , \phi \rdual  - \ldual X_s, \phi \rdual
- \int_s^t \ldual X_u , A \Delta \phi \rdual \dd u
\bigg)
\prod_{j= 1}^k h_j( \ldual X_{t_j}, \phi \rdual)
\right]
= 0
.
\end{align*}
Thus, see e.g.~Theorem~4.8.10~of \cite{ethier/kurtz:1986},
\begin{align*}
M_t(\phi) := \ldual X_t, \phi \rdual  - 
\ldual X_0, \phi \rdual - \int_0^t \ldual X_s, \frac{m}{2} \Delta \phi \rdual \dd s
,
\end{align*}
is a martingale.

The uniform integrability of $\lquad M^N(\phi)\rquad_t$ is immediate from
our previous calculations. That of $M^N_t(\phi)^2$ follows on setting
$\theta=1$ in the  
arguments (based on Jensen and Minkowski's inequalities) that gave us
Lemma~\ref{supboundthetamomentlemma}. 

Since the solution to the martingale problem corresponding to the finite
vairance superBrownian motion is unique, the proof of 
Theorem~\ref{result fixed radius} is complete.

\subsection{Proof of Theorem~\ref{jonoinfinitelimit}}

Since the solution to the martingale problem for the superBrownian motion
with stable branching law is unique, it is enough to check that all 
limit points satisfy the martingale problem~(\ref{beta branching martingale}).
Once again we must check that the martingale property is conserved 
under passage to the limit, for which
it suffices to
show that for any $k \geq 0$, $0 \leq t_1 <...< t_k \leq s < t$, 
$h_1,...,h_k \in C_b$ and $\phi \in C_0^3(\IR^d)$ with $\phi \geq 0$,
\begin{multline*}
\lim_{N\to\infty}
\IE
\Bigg[
\bigg(
\exp(-\ldual X^N_{t}, \phi \rdual)
-
\exp(-\ldual X^N_s, \phi \rdual)
\\
-
\int_s^t \left\ldual X^N_u , -\frac{m}{2}\Delta \phi + 
\kappa \phi^{1+\beta} \right\rdual \exp(-\ldual X^N_u,\phi \rdual)
\dd u
\bigg)
\prod_{j= 1}^k h_j( \ldual X_{t_j}, \phi \rdual)
\Bigg]
= 0
,
\end{multline*}
with $m$ and $\kappa$ as in the statement of Theorem~\ref{jonoinfinitelimit}.
Once again, since the $h_j$ are bounded, this is an easy consequence of the
Dominated Convergence Theorem, our results of 
Section~\ref{variableradiuslimiting}
and Lemma~\ref{independence requirement variable radius}.

\textbf{Acknowledgments.}
AE would like to thank Tom Kurtz for useful discussions.

\bibliographystyle{plainnat}
\DeclareRobustCommand{\VAN}[3]{#3}

\bibliography{structureCDE}

\begin{thebibliography}{33}
\providecommand{\natexlab}[1]{#1}
\providecommand{\url}[1]{\texttt{#1}}
\expandafter\ifx\csname urlstyle\endcsname\relax
  \providecommand{\doi}[1]{doi: #1}\else
  \providecommand{\doi}{doi: \begingroup \urlstyle{rm}\Url}\fi

\bibitem[Aldous(1978)]{aldous:1978}
D~Aldous.
\newblock {Stopping times and tightness}.
\newblock \emph{Ann. Probab.}, 6, 1978.

\bibitem[Barton et~al.(2010)Barton, Etheridge, and
  V\'eber]{barton/etheridge/veber:2010}
N~H Barton, A~M Etheridge, and A~V\'eber.
\newblock A new model for evolution in a spatial continuum.
\newblock \emph{Electron. J. Probab.}, 15:\penalty0 162--216, 2010.

\bibitem[Barton et~al.(2013)Barton, Etheridge, and
  V\'eber]{barton/etheridge/veber:2013}
N~H Barton, A~M Etheridge, and A~V\'eber.
\newblock Modelling evolution in a spatial continuum.
\newblock \emph{J. Stat. Mech.}, page PO1002, 2013.

\bibitem[Berestycki et~al.(2013)Berestycki, Etheridge, and
  V\'eber]{berestycki/etheridge/veber:2013}
N~Berestycki, A~M Etheridge, and A~V\'eber.
\newblock Large-scale behaviour of the spatial {$\Lambda$-Fleming-Viot}
  process.
\newblock \emph{Ann. Inst. H. Poincar\'e}, 49(2):\penalty0 374--401, 2013.

\bibitem[Bertoin and Le~Gall(2003)]{bertoin/legall:2003}
J~Bertoin and J-F Le~Gall.
\newblock {Stochastic flows associated to a coalescent process}.
\newblock \emph{Prob. Theor. Rel. Fields}, 126:\penalty0 261--288, 2003.

\bibitem[Bertoin and Le~Gall(2006)]{bertoin/legall:2006}
J~Bertoin and J-F Le~Gall.
\newblock {Stochastic flows associated to coalescent processes III: Limit
  theorems}.
\newblock \emph{Illinois J Math}, 50\penalty0 (1):\penalty0 147--181, 2006.

\bibitem[Birkner et~al.(2005)Birkner, Blath, Capaldo, Etheridge, M\"ohle,
  Schweinsberg, and Wakolbinger]{birkner/blath/capaldo/etal:2005}
M~Birkner, J~Blath, M~Capaldo, A~M Etheridge, M~M\"ohle, J~Schweinsberg, and
  A~Wakolbinger.
\newblock {Alpha-stable branching and Beta-coalescents}.
\newblock \emph{Elect. J. Probab.}, 10:\penalty0 303--325, 2005.

\bibitem[Cox and Perkins(2005)]{cox/perkins:2005}
J~T Cox and E~A Perkins.
\newblock {Rescaled Lotka-Volterra models converge to super-Brownian motion}.
\newblock \emph{Ann. Probab.}, 33:\penalty0 904--947, 2005.

\bibitem[Cox et~al.(1999)Cox, Durrett, and Perkins]{cox/durrett/perkins:1999}
J~T Cox, R~Durrett, and E~A Perkins.
\newblock {Rescaled particle systems converging to super-Brownian motion}.
\newblock In \emph{Perplexing problems in probability: Progress in probability
  44}, pages 269--284. Birkh\"auser, Boston, MA, 1999.

\bibitem[Cox et~al.(2000)Cox, Durrett, and Perkins]{cox/durrett/perkins:2000}
J~T Cox, R~Durrett, and E~A Perkins.
\newblock {Rescaled voter models converge to super-Brownian motion}.
\newblock \emph{Ann. Probab.}, 28:\penalty0 185--234, 2000.

\bibitem[Darling and Kac(1957)]{darling/kac:1957}
D~A Darling and M~Kac.
\newblock {On occupation times for Markoff processes }.
\newblock \emph{Trans. Amer. Math. Soc.}, 84:\penalty0 444--458, 1957.

\bibitem[Dawson(1975)]{dawson:1975}
D~A Dawson.
\newblock Stochastic evolution equations and related measure processes.
\newblock \emph{J. Multivar. Anal.}, 5:\penalty0 1--52, 1975.

\bibitem[Dawson(1993)]{dawson:1993}
D~A Dawson.
\newblock {Measure-valued Markov processes}.
\newblock In \emph{\'Ecole d'\'et\'e de probabilit\'es de Saint Flour}, volume
  1541. Springer-Verlag, 1993.

\bibitem[{El Karoui} and Roelly(1991)]{el-karoui/roelly:1991}
N~{El Karoui} and S~Roelly.
\newblock {Propriet\'es de martingales, explosion et representation de
  L\'evy-Khinchine d'une classe du processus de branchement \`a valeurs
  mesures}.
\newblock \emph{Stoch. Proc. Appl.}, 38\penalty0 (2):\penalty0 239--266, 1991.

\bibitem[Etheridge(2000)]{etheridge:2000}
A~M Etheridge.
\newblock \emph{{An introduction to superprocesses}}, volume~20 of
  \emph{University lecture notes}.
\newblock Amer. Math. Soc., 2000.

\bibitem[Etheridge(2008)]{etheridge:2008}
A~M Etheridge.
\newblock Drift, draft and structure: some mathematical models of evolution.
\newblock \emph{Banach Center Publ.}, 80:\penalty0 121--144, 2008.

\bibitem[Etheridge and Kurtz(2014)]{etheridge/kurtz:2014}
A~M Etheridge and T~G Kurtz.
\newblock Genealogical constructions of population models.
\newblock \emph{arXiv:1402.6274}, 2014.

\bibitem[Etheridge et~al.(2014)Etheridge, V\'eber, and
  Yu]{etheridge/veber/yu:2014}
A~M Etheridge, A~V\'eber, and F~Yu.
\newblock {Rescaling limits of the spatial Lambda-Fleming-Viot process with
  selection}.
\newblock \emph{arXiv:1406.5884}, 2014.

\bibitem[Etheridge et~al.(2015)Etheridge, Freeman, Penington, and
  Straulino]{etheridge/freeman/penington/straulino:2015}
A~M Etheridge, N~Freeman, S~Penington, and D~Straulino.
\newblock {Branching Brownian motion and selection in the Spatial
  Lambda-Fleming-Viot process}.
\newblock \emph{To appear in Ann. Appl. Probab. arXiv:1512.03766}, 2015.

\bibitem[Ethier and Kurtz(1986)]{ethier/kurtz:1986}
S~N Ethier and T~G Kurtz.
\newblock \emph{{Markov processes: characterization and convergence}}.
\newblock Wiley, 1986.

\bibitem[Evans(1997)]{evans:1997}
S~N Evans.
\newblock Coalescing markov labelled partitions and a continuous sites genetics
  model with infinitely many types.
\newblock \emph{Ann. Inst. Henri Poincar\'e}, 33:\penalty0 339--358, 1997.

\bibitem[Freeman(2010)]{freeman:2010}
N~Freeman.
\newblock \emph{{Super-Brownian motion is a scaling limit of bursting
  processes}}.
\newblock Chapter 4 of Transfer of Status Thesis, University of Oxford, 2010.

\bibitem[Gel'fand and Shilov(1964)]{gelfand/shilov:1964}
I~M Gel'fand and G~E Shilov.
\newblock \emph{Generalised Functions: Volume 1. Properties and Operations.
  Translated by E Saletan}.
\newblock Academic Press, 1964.

\bibitem[{\VAN{Hofstad}{Van~der}{van~der}}~Hofstad and
  Sakai(2010)]{vanderhofstad/sakai:2010}
R~{\VAN{Hofstad}{Van~der}{van~der}}~Hofstad and A~Sakai.
\newblock Convergence of the critical finite range contact process to
  super-brownian motion above the upper critical dimension: The higher point
  functions.
\newblock \emph{Electron. J. Probab.}, 15:\penalty0 801--894, 2010.

\bibitem[{\VAN{Hofstad}{Van~der}{van~der}}~Hofstad and
  Slade(2003)]{vanderhofstad/slade:2003}
R~{\VAN{Hofstad}{Van~der}{van~der}}~Hofstad and G~Slade.
\newblock Convergence of critical oriented percolation to super-brownian motion
  above 4+1 dimensions.
\newblock \emph{Ann. Inst. Henri Poincar\'e Probab. Stat.}, 39:\penalty0
  413--485, 2003.

\bibitem[{\VAN{Hofstad}{Van~der}{van~der}}~Hofstad
  et~al.(2017){\VAN{Hofstad}{Van~der}{van~der}}~Hofstad, Holmes, and
  Perkins]{vanderhofstad/holmes/perkins:2017}
R~{\VAN{Hofstad}{Van~der}{van~der}}~Hofstad, M~Holmes, and E~A Perkins.
\newblock {A criterion for convergence to super-Brownian motion on path space}.
\newblock \emph{Ann. Probab.}, 45:\penalty0 278--376, 2017.

\bibitem[Holmes(2008)]{holmes:2008}
M~Holmes.
\newblock {Convergence of lattice trees to super-Brownian motion above the
  criticial dimension}.
\newblock \emph{Elect. J. Prob.}, 13:\penalty0 671--755, 2008.

\bibitem[Jacod and Shiryaev(2003)]{jacodshiryaev2003limittheorems}
Jean Jacod and Albert~N. Shiryaev.
\newblock \emph{Limit theorems for stochastic processes}, volume 288 of
  \emph{Grundlehren der Mathematischen Wissenschaften [Fundamental Principles
  of Mathematical Sciences]}.
\newblock Springer-Verlag, Berlin, second edition, 2003.
\newblock ISBN 3-540-43932-3.
\newblock \doi{10.1007/978-3-662-05265-5}.
\newblock URL \url{http://dx.doi.org/10.1007/978-3-662-05265-5}.

\bibitem[Lambert and Schertzer(2016)]{lambert/schertzer:2016}
A~Lambert and E~Schertzer.
\newblock {Recovering the Brownian coalescent point process from the Kingman
  coalescent by conditional sampling}.
\newblock \emph{arXiv:1611.01323[math.PR]}, 2016.

\bibitem[Perkins(2002)]{perkins:2002}
E~A Perkins.
\newblock { Dawson-Watanabe Superprocesses and Measure-valued diffusions}.
\newblock In \emph{\'Ecole d'\'et\'e de probabilit\'es de Saint Flour
  XXIX-1999}. Springer-Verlag, 2002.

\bibitem[Schweinsberg(2003)]{schweinsberg:2003}
J~Schweinsberg.
\newblock {Coalescents obtained from supercritical Galton-Watson processes}.
\newblock \emph{Stoch. Proc. Appl.}, 106:\penalty0 107--139, 2003.

\bibitem[V\'eber and Wakolbinger(2015)]{veber/wakolbinger:2015}
A~V\'eber and A~Wakolbinger.
\newblock {The spatial Lambda-Fleming-Viot process: an event based construction
  and a lookdown representation}.
\newblock \emph{Ann. Inst. H. Poincar\'e}, 51:\penalty0 570--598, 2015.

\bibitem[Watanabe(1968)]{watanabe:1968}
S~Watanabe.
\newblock A limit theorem of branching processes and continuous state branching
  processes.
\newblock \emph{J. Math. Kyoto Univ.}, 8:\penalty0 141--167, 1968.

\end{thebibliography}

\end{document}